\definecolor{darkgreen}{rgb}{0.0, 0.5, 0.0}
\definecolor{darkblue}{rgb}{0.0, 0.0, 0.55}
\definecolor{bgcolor}{rgb}{0.8,1,1}
\definecolor{PineGreen}{RGB}{0,110,51}
\definecolor{BrickRed}{RGB}{143,20,2}
\newcommand{\cmark}{{\color{PineGreen}\ding{51}}}%
\newcommand{\xmark}{{\color{BrickRed}\ding{55}}}%
\newcommand{\eqdef}{\overset{\text{def}}{=}}
\newcommand{\cD}{{\cal D}}
\newcommand{\cE}{{\cal E}}
\newcommand{\cF}{{\cal F}}
\newcommand{\cO}{{\cal O}}
\newcommand{\cU}{{\cal U}}
\newcommand{\sign}{\mathrm{sign}}
\newcommand{\EE}{\mathbf{E}}
\def\R{\mathbb{R}}
\newcommand{\E}{{\mathbb E}}
\def\R{\mathbb R}
\def\E{\mathbb E}
\def\EE{\mathbb E}
\def\PP{\mathbb P}
\def\la{\langle}
\def\ra{\rangle}
\newcommand{\norm}[1]{\left\lVert#1\right\rVert}
\newcommand{\Exp}[2][]{
	\ifthenelse{\equal{#1}{}}{\E \left[ #2 \right]}{\E \left[ #2 \, \middle| \, #1 \right]}
}
\newcommand{\sm}{L}
\newcommand{\rb}[1]{\left(#1\right)}
\newtheorem{lemma}{Lemma}
\newtheorem{theorem}{Theorem}
\newtheorem{definition}{Definition}
\newtheorem{assumption}{Assumption}
\newtheorem{corollary}{Corollary}
\newcommand{\circledOne}{\text{\ding{172}}}
\newcommand{\circledTwo}{\text{\ding{173}}}
\newcommand{\circledThree}{\text{\ding{174}}}
\newcommand{\circledFour}{\text{\ding{175}}}
\newcommand{\circledFive}{\text{\ding{176}}}
\newcommand{\circledSix}{\text{\ding{177}}}
\newcommand{\circledSeven}{\text{\ding{178}}}
\newcommand{\circledEight}{\text{\ding{179}}}
\definecolor{linen}{HTML}{FAF0E6} % define linen color as mycolor
\newcommand{\ilyas}[1]{\ifthenelse{\boolean{showcomments}}{\todo[inline]{\textbf{Ilyas: }#1}}{}}
\newcommand{\abdurakhmon}[1]{\ifthenelse{\boolean{showcomments}}{\todo[inline]{\textbf{Abdurakhmon: }#1}}{}}
\title{Second-order Optimization under Heavy-Tailed Noise: Hessian Clipping and Sample Complexity Limits}
\author{Abdurakhmon Sadiev\thanks{Corresponding author: \texttt{abdurakhmon.sadiev@kaust.edu.sa}}\\ KAUST\thanks{King Abdullah University of Science and Technology, Thuwal, Saudi Arabia.}\\
Center of Excellence for Generative AI \And Peter Richtárik \\ KAUST\\ Center of Excellence for Generative AI \And Ilyas Fatkhullin \\ETH Zurich, ETH AI Center, \\ Georgia Institute of Technology  }
\begin{document}

\maketitle

\begin{abstract}
  Heavy-tailed noise is pervasive in modern machine learning applications, arising from data heterogeneity, outliers, and non-stationary stochastic environments. While second-order methods can significantly accelerate convergence in light-tailed or bounded-noise settings, such algorithms are often brittle and lack guarantees under heavy-tailed noise---precisely the regimes where robustness is most critical. In this work, we take a first step toward a theoretical understanding of second-order optimization under heavy-tailed noise. We consider a setting where stochastic gradients and Hessians have only bounded $p$-th moments, for some $p\in (1,2]$, and establish tight lower bounds on the sample complexity of any second-order method. We then develop a variant of normalized stochastic gradient descent that leverages second-order information and provably matches these lower bounds. To address the instability caused by large deviations, we introduce a novel algorithm based on gradient and Hessian clipping, and prove high-probability upper bounds that nearly match the fundamental limits. Our results provide the first comprehensive sample complexity characterization for second-order optimization under heavy-tailed noise. This positions Hessian clipping as a robust and theoretically sound strategy for second-order algorithm design in heavy-tailed regimes.
\end{abstract}

\section{Introduction}

We consider the stochastic optimization problem
\begin{eqnarray}\label{eq:main_problem}
\min_{x \in \mathbb{R}^d} F(x), \qquad F(x) := \mathbb{E}_{\xi \sim \mathcal{D}}[f(x, \xi)],
\end{eqnarray}
with $F$ a smooth (but potentially nonconvex) objective, and $\xi$ a random variable drawn from an unknown distribution. While first-order methods are widely used in practice due to their simplicity and scalability, the application of second-order methods remains limited in stochastic settings. This is in stark contrast to their strong theoretical properties: second-order methods such as Newton’s method \citep{bennett1916newton,kantorovich1948functional}, cubic regularization \citep{nesterov2006cubic,nesterov2008accelerating}, their variants for finite-sum minimization \citep{kovalev2019stochastic}, quasi-Newton \citep{dennis1977quasi}, and adaptive second-order algorithms \citep{doikov2024super} can offer provably faster convergence rates. Furthermore, complexity-theoretic results demonstrate that second-order methods can outperform first-order ones in noiseless (or finite sum) settings, often with  moderate additional computational cost \citep{agarwal2018lower,arjevani2019oracle,arjevani2020second} only.

\paragraph{Second-order methods are highly sensitive to noise.}
First-order stochastic optimization (FOSO) methods--such as stochastic gradient descent (SGD) and its adaptive variants--are well understood, both algorithmically and from a complexity--theoretic perspective. These methods enjoy optimal convergence guarantees under a broad class of noise models, including bounded variance and infinite variance regimes \citep{gower2019sgd,khaled2020better,yang2023two_sides,Wang_SGD_inf_var_21,fatkhullin2025can}. In contrast, the development of second-order stochastic optimization (SOSO) methods has been significantly hampered by their extreme sensitivity to noise, particularly in Hessian estimation. Existing second-order methods suffer from both practical instability and overly restrictive theoretical assumptions. For instance, stochastic extensions of cubic regularization \citep{ghadimi2017second,tripuraneni2018stochastic} and its momentum variants \citep{chayti2024improving} require bounded noise assumptions, which are rarely satisfied in modern large-scale learning tasks. Other frameworks--such as trust-region methods \citep{arjevani2020second}, recursive momentum schemes \citep{BetterSGDUsingSOM_Tran_2021}, and extrapolation-based approaches \citep{antonakopoulos2022extra,agafonov2023advancing}--relax this to bounded variance, but still fall short of handling more realistic heavy-tailed noise distributions. To the best of our knowledge, no existing SOSO method can operate reliably under noise conditions with unbounded variance, highlighting a fundamental gap in the current landscape of stochastic optimization.

\paragraph{Heavy-tailed noise in gradients and Hessians.} In recent years, the first-order optimization literature has increasingly moved beyond the bounded variance assumption, adopting more general noise models that better reflect empirical observations in modern applications. A particularly influential framework is the \emph{bounded central moment} condition ($p$-BCM), which assumes
\[
\mathbb{E}\left[\|\nabla f(x, \xi) - \nabla F(x)\|^p\right] \le \sigma^p, \qquad \text{for some } p \in (1, 2],
\]
thereby allowing for heavy-tailed noise with unbounded variance. This model aligns well with empirical findings in deep learning and reinforcement learning (RL), where heavy-tailed gradient noise is often observed \citep{garg2021proximal,zhang2020adaptive,ahn2023linear,simsekli2019tail,battash2024revisiting}. Algorithms designed for robustness under $p$-BCM noise, such as those employing gradient clipping or normalization, have demonstrated both strong empirical performance and rigorous convergence guarantees \citep{zhang2020adaptive,Hubler2024clip_to_norm}. These developments suggest that heavy-tailed noise models are not only practically relevant, but also theoretically tractable, providing a principled foundation for algorithm design. Given that second-order methods are even more sensitive to noise than first-order ones, this naturally motivates the extension of such robustness principles to the second-order setting. In this work, we take this step and aim to
\begin{center}
\textit{develop a comprehensive theory of second-order stochastic optimization under heavy-tailed noise.}
\end{center}
Specifically, we assume access to unbiased stochastic gradients and Hessian-vector products, each satisfying a $p$-BCM-type condition:
\[
\mathbb{E}\left[\|\nabla^2 f(x, \xi) - \nabla^2 F(x)\|_{\text{op}}^p\right] \le \sigma_h^p, \qquad \text{for some } p \in (1, 2].
\]
Our goal is to characterize the fundamental performance limits and develop practical algorithms for minimizing $F(x)$ in this setting, with a focus on obtaining guarantees for finding points with small gradient norm $\|\nabla F(x)\| \leq \varepsilon$, either in expectation or with high probability.\footnote{It would be interesting to extend our results to finding second-order stationary points \citep{nesterov2006cubic,arjevani2020second}, but we leave this investigation for future work.}

\paragraph{From gradient to Hessian clipping.} Gradient clipping has become a standard tool in modern machine learning, particularly due to its empirical success in stabilizing training under heavy-tailed noise and ill-conditioned objectives \citep{pascanu2013difficulty,schulman-et-al17ppo}. Theoretically, it has been shown to offer robustness under relaxed smoothness and moment assumptions \citep{polyak1979adaptive,jakovetic2023nonlinear}, and enable high-probability guarantees with logarithmic (sub-Gaussian) dependence on the failure probability. The latter property of gradient clipping is in stark contrast to the classical linear algorithms, e.g., SGD, Momentum SGD, for which high probability lower bounds were recently established \citep{fatkhullin2025can}. These properties are well-documented across both convex \citep{nazin2019algorithms,gorbunov2020stochastic,davis2021low,gorbunov2023high,liu2023stochastic,gorbunov2024high,puchkin2024breaking,NonlinearHP2023Armacki} and nonconvex \citep{sadiev2023high,nguyen2023improved,cutkosky2021high,Hubler2024clip_to_norm} regimes. Crucially, high-probability results are both theoretically and practically appealing as they guarantee the performance of individual runs rather than for the average-case behavior.

Despite these advances, the robustness enabled by gradient clipping remains largely confined to first-order methods. In the second-order setting, where both gradients and Hessians may be corrupted by heavy-tailed noise, comparable algorithmic tools are conspicuously lacking. The core difficulty is not just technical but conceptual: existing SOSO algorithms do not include mechanisms to suppress the influence of extreme noise in Hessian estimates. This leads us to a fundamental question for the design of robust second-order stochastic methods:
\begin{center}
\textit{Can we develop second-order algorithms with high-probability convergence guarantees under simultaneous heavy-tailed noise in both gradients and Hessians?}
\end{center}

\begin{figure}[ht]
    \begin{minipage}[t]{0.48\linewidth}
        \centering
        \includegraphics[width=\linewidth]{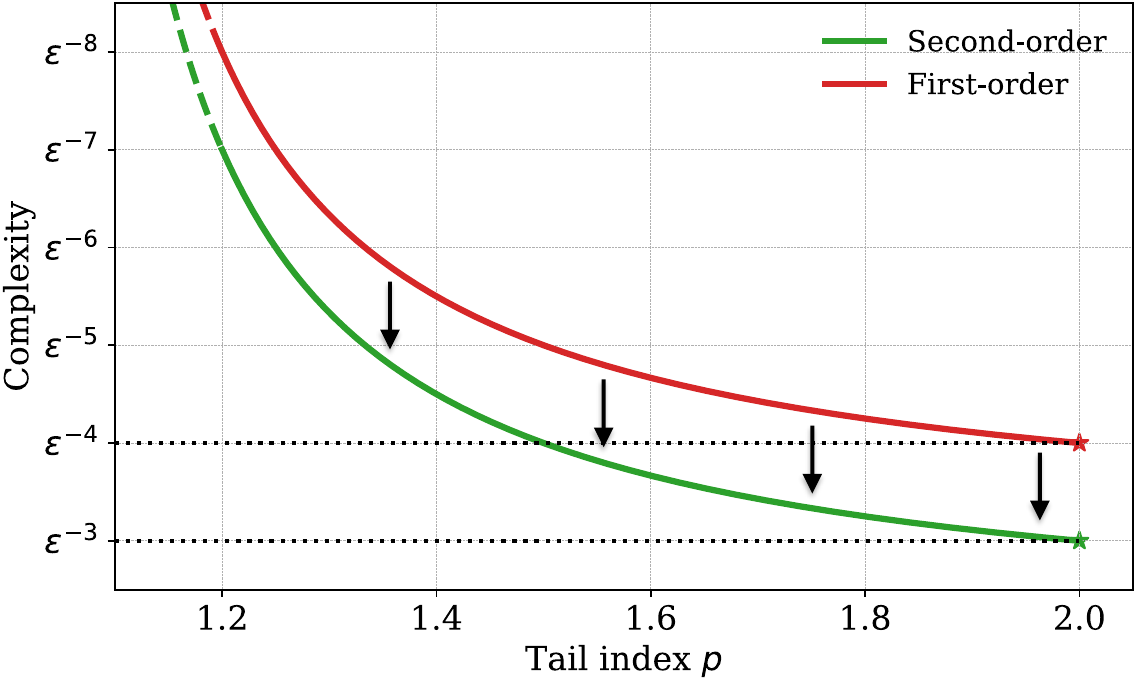}
        \caption{Sample complexity comparison for FOSO and SOSO depending on the tail index $p$. Each line corresponds to the leading term in the sample complexity for each class of algorithms. These leading terms match in upper and lower bounds, so this characterization is exact. We establish the characterization along the entire green line complementing  prior work \citep{arjevani2023lower} for $p=2$. 
        }
        \label{fig:complexities}
    \end{minipage}%
    \hfill
    \begin{minipage}[t]{0.47\linewidth}
        \centering
        \includegraphics[width=\linewidth]{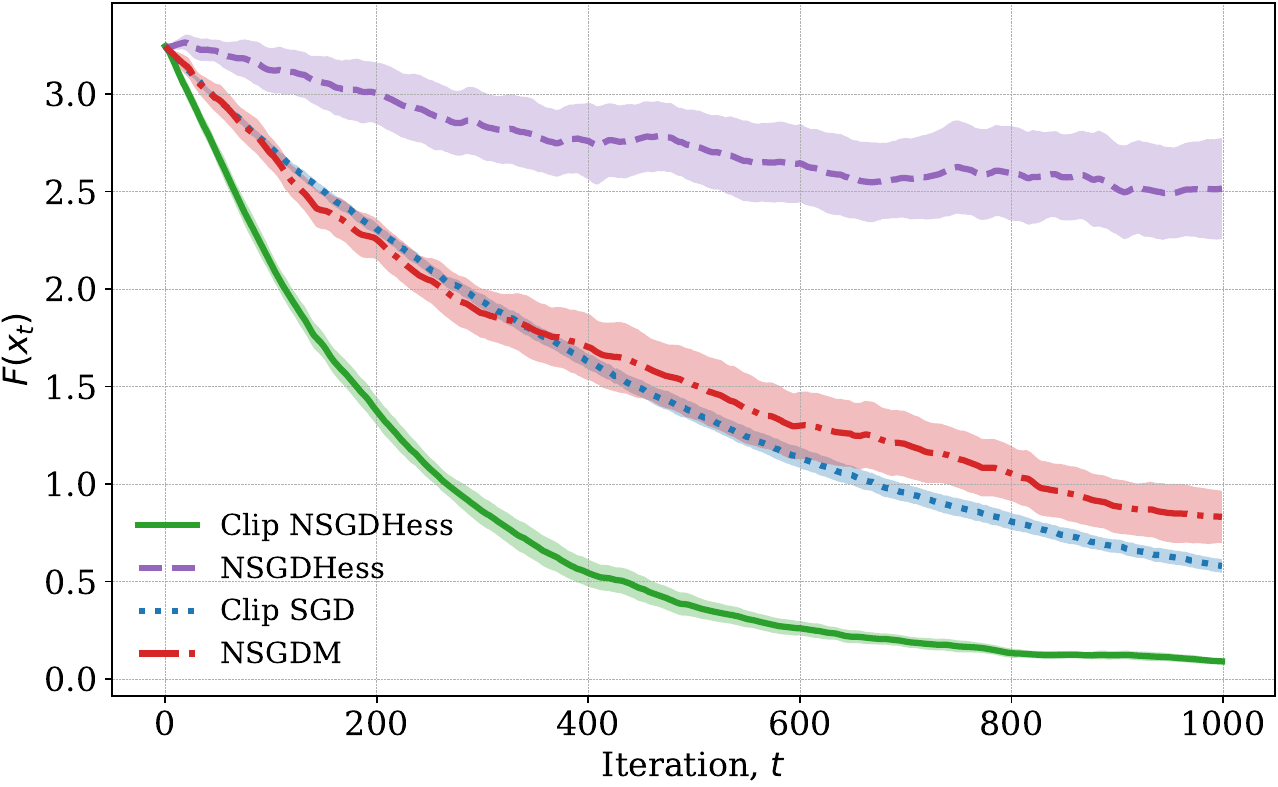}
        \caption{Performance of algorithms on a simple problem, $F(x) = 0.5 \norm{x}^2$, $d = 10$ with synthetic noise generated from a two-sided Pareto distribution with tail index $p=1.1$. We observe that algorithms without clipping, NSGDM and NSGDHess, suffer significantly from noise. This motivates our more in-depth study involving gradient and Hessian clipping for high probability convergence.}
        \label{fig:compare-algs}
    \end{minipage}
\end{figure}

\paragraph{Contributions:}
\begin{itemize}
    \item \textbf{Tight lower bound.} We establish a minimax lower bound on the sample complexity of any SOSO algorithm under $p$-BCM noise model:
\(
\Omega\left( \frac{\Delta \sigma_h}{\varepsilon^2} \left( \frac{\sigma}{\varepsilon} \right)^{\frac{1}{p - 1}} \right),
\)
where $\Delta$ denotes the initial suboptimality, and $\sigma$, $\sigma_h$ denote the scale of noise in gradients and Hessians, respectively. This bound improves over the best known complexity for first-order methods \citep{zhang2020adaptive,Hubler2024clip_to_norm} by a factor of $1/\varepsilon$ uniformly for all $p\in (1,2]$, demonstrating that second-order methods can yield provable advantages even in the heavy-tailed regime; see Figure~\ref{fig:complexities}. Moreover, our result implies that increasing the order of the algorithm (e.g., using higher-order derivatives) cannot improve the rate under the same noise assumptions significantly.

\item \textbf{Optimal algorithm.} We develop a second-order stochastic algorithm that achieves the lower bound (up to constants) in the regime $L \leq \sigma_h$, where $L$ denotes the Lipschitz constant of the gradient. The algorithm attains the sample complexity 
\(
\mathcal{O}\left( \frac{\Delta (L + \sigma_h)}{\varepsilon^2} \left( \frac{\sigma}{\varepsilon} \right)^{\frac{1}{p - 1}} \right),
\)
and is, to the best of our knowledge, the first second-order method that provably works under $p$-BCM noise. Notably, our method does not require second-order smoothness of the objective. This result holds even in the classical setting $p = 2$, highlighting the broader implications of the approach.

\item \textbf{High-probability convergence via Hessian clipping.} We propose a clipped variant of our algorithm that incorporates both gradient and \emph{Hessian clipping}--the latter introduced here for the first time. We show that this variant achieves near-optimal sample complexity with high probability, incurring only a poly-logarithmic overhead in the failure probability. This significantly extends the robustness of SOSO methods, enabling strong guarantees not only in expectation but also with high confidence.

\end{itemize}

Our results are primarily theoretical and validated through synthetic experiments with controllable heavy-tailed noise. While one component of our algorithm (in its unclipped form) has previously been applied in reinforcement learning settings \citep{Fatkhullin_SPGM_FND_2023}, its broader practical deployment remains an open challenge. In particular, several questions must be addressed to make these methods viable in real-world applications: How can we reduce hyperparameter tuning overhead? Are there default configurations that are robust across problem classes? Is clipping truly necessary, or can it be systematically replaced by normalization techniques, as recent work suggests in the first-order context \citep{Hubler2024clip_to_norm}? We view our contributions as a foundational step towards a principled understanding of second-order methods under realistic, heavy-tailed noise conditions, and hope they spur further theoretical and algorithmic advances in this direction.

\section{Notations and Assumptions}
We use the common notation for natural numbers $\mathbb{N} = \left\{0, 1, \dots\right\}$, and let $[n] = \left\{1, 2, \dots, n\right\}$. Throughout this paper, $d\in\mathbb{N}_{\geq1}$ denotes the dimension of the optimization problem \eqref{eq:main_problem}. We use $\cO\rb{\cdot}, \Omega\rb{\cdot}$ for sample complexity notations, which preserve the dominating term in the target accuracy $\varepsilon>0$ along with the multiplicative constants such as initial suboptimality $\Delta$, smoothness constants $L_r$, $r \geq 1$, the variance of higher-order stochastic oracles $\sigma_r$, $r\geq 1$. When we write $\bar\cO\rb{\cdot}$, we suppress the dependence on all parameters except for accuracy $\varepsilon$ in the dominating term.
As in \cite{carmon2020lower}, for any $q$-th order tensor $T \in \R^{\otimes^q d}$, we define the support of $T$ as $\operatorname{supp}\{T\} \eqdef \left\{i \in [d]~|~ T_i \neq 0\right\}$, where $T_i$ is the $(q-1)$-th order tensor, given by $[T_i]_{j_1,\ldots,j_{q-1}} =T_{i,j_1,\ldots,j_{q-1}} $.

We make the following assumptions throughout the paper.  
\begin{assumption}[Lower Boundedness]\label{assum:lower_bounded}
	The objective function $F$ is lower bounded by $F^* > -\infty$.
\end{assumption}

\begin{assumption}[$\sm$-smoothness]\label{assum:l_smooth}
	The objective function $F$ is $L$-smooth, i.e.,\ $F$ is differentiable and for all $x,y\in\R^d$, we have $\norm{\nabla F(x) -\nabla F(y)} \leq L \norm{x-y}$.
\end{assumption}

\begin{assumption}[$p$-BCM]\label{ass:p_BCM}
    We have access to stochastic gradients $\nabla f(x, \xi)$ such that $\Exp{\nabla f(x, \xi)} = \nabla F(x)$. Moreover, there exist $p \in (1, 2]$ and $\sigma > 0$ such that 
    $$
    \Exp{\norm{\nabla f(x, \xi) - \nabla F(x)}^{p}} \leq \sigma^p  \qquad \text{for all } x \in \R^d .
    $$
\end{assumption}

\begin{assumption}[$p$-BCM for Hessian]\label{ass:p_BCM_hess}
    The function $F$ is twice differentiable and we additionally have access to stochastic Hessian-vector products $\nabla^2 f(x, \xi) \cdot v$ for any $v\in \R^d$ such that $\Exp{\nabla^2 f(x, \xi)} = \nabla^2 F(x)$. Moreover, there exist $p \in (1, 2]$ and $\sigma_h > 0$ such that
    $$
    \Exp{\norm{\nabla^2 f(x, \xi) - \nabla^2 F(x)}_{\text{op}}^{p}} \leq \sigma_h^p  \qquad \text{for all } x \in \R^d .
    $$
\end{assumption}

\section{Lower Complexity Bounds for SOSO under $p$-BCM}
\label{sec:lower_bounds}

To establish lower bounds, we build on a technique developed in a series of works \citep{arjevani2023lower, carmon2020lower, arjevani2020second}, which introduced a \textit{worst-case} non-convex function exhibiting the zero-chain property: starting from the origin, each algorithmic iteration reveals information about a single coordinate only. Leveraging this framework, \citet{zhang2020adaptive} derived the first lower bounds for FOSO under heavy-tailed noise, assuming the $p$-th moment of the stochastic gradient is bounded, i.e., $\EE\left[\norm{\nabla f(x,\xi)}^p\right] \leq G^p$, which differs from Assumption~\ref{ass:p_BCM}.

In our work, we assume access to a \( q \)-th order stochastic oracle, meaning that each query yields stochastic approximations of derivatives of orders \( 1 \) to \( q \) at a given point \( x \). This oracle model was formally introduced by \citet{arjevani2020second}. Building on their proof technique, we derive new lower bounds under the assumption that each stochastic derivative has a bounded \( p \)-th central moment.

\textbf{Function class.} We consider the class of \( q \)-times differentiable functions (\( q \geq 1 \)) satisfying two key properties: (i) functional boundedness, i.e., \( F(0) - F^* \leq \Delta \); and (ii) Lipschitz continuity of each derivative up to order \( q \), i.e., for all \( x, y \in \R^d \) and \( r \in [q] \), 
\[
\|\nabla^{r}F(x) - \nabla^{r}F(y)\|_{\text{op}} \leq L_{r}\|x - y\|.
\]
We denote this function class by \( \cF(\Delta, L_{1:q}) \), where \( L_{1:q} = \{L_1, L_2, \dots, L_q\} \). Note that the constant \( L \) from Assumption~\ref{assum:l_smooth} corresponds to \( L_1 \) in this notation.
\begin{comment}
\textbf{Oracle class.} Next step is the introduction of the oracle class. For a fixed function $F \in \cF(\Delta, L_{1:q})$, we define stochastic $q$-th order oracle as follows: for any $x \in \R^d$ and $\xi \sim \cD$
\begin{equation*}
    \text{O}^q_f(x,\xi) \eqdef \left( f(x,\xi), {\nabla f}(x,\xi), {\nabla^2 f}(x,\xi), \dots, {\nabla^q f}(x,\xi)\right),
\end{equation*}
where each stochastic estimator satisfies the next properties
\begin{equation*}
    \EE_{\xi\sim \cD}\left[{f}(x,\xi)\right] =  F(x)\quad\text{and}\quad \EE_{\xi\sim\cD}\left[{\nabla^r f}(x,\xi)\right] =  \nabla^r F(x), \quad \forall~r\in[q].
\end{equation*}
 Thus, a oracle class $\cO_q(F,\sigma_{1:q})$ to be the set of all stochastic $q$-th-order oracles for which the $p$-moment of the derivative estimators ($p \in [1;2)$) satisfies 
\begin{equation*}
    \EE_{\xi\sim \cD}\left[\left\|{\nabla^{r} f}(x,\xi) - \nabla^{r} F(x)\right\|^{p}_{\text{op}}\right] \leq \sigma^{p}_{r},\quad r \in [q]. 
\end{equation*}
For $r=1$ it is clear that $\sigma_1 = \sigma$ from Assumption~\ref{ass:p_BCM}. For $r=2$ we have the same observation  $\sigma_2  = \sigma_h$ from Assumption~\ref{ass:p_BCM_hess}. 
\end{comment}

\textbf{Oracle class.} We now define the oracle model. For a fixed function \( F \in \cF(\Delta, L_{1:q}) \), a stochastic \( q \)-th order oracle is defined as follows: for any \( x \in \R^d \) and \( \xi \sim \cD \),
\begin{equation*}
    \text{O}^q_f(x,\xi) \eqdef \left( f(x,\xi), \nabla f(x,\xi), \nabla^2 f(x,\xi), \dots, \nabla^q f(x,\xi) \right),
\end{equation*}
where each component satisfies the unbiasedness conditions:
\(\EE_{\xi\sim \cD}[f(x,\xi)] = F(x), \quad \text{and} \quad \EE_{\xi\sim\cD}[\nabla^r f(x,\xi)] = \nabla^r F(x), \quad \forall~r \in [q].\) We define the oracle class \( \cO_q(F, \sigma_{1:q}) \) as the set of all such stochastic oracles for which the \( p \)-th moment of the estimation error (with \( p \in [1,2) \)) satisfies
\begin{equation*}
    \EE_{\xi\sim \cD}\left[\left\| \nabla^r f(x,\xi) - \nabla^r F(x) \right\|_{\text{op}}^p\right] \leq \sigma_r^p, \quad \forall~r \in [q],~~ \forall~x \in \R^d.
\end{equation*}
For \( r = 1 \), this recovers \( \sigma_1 = \sigma \) from Assumption~\ref{ass:p_BCM}, and for \( r = 2 \), we similarly have \( \sigma_2 = \sigma_h \) from Assumption~\ref{ass:p_BCM_hess}.

\textbf{Algorithm class.} To prove lower bounds, we will work with a class of \textit{zero-respecting} algorithms. The formal definition is provided below. The reason why this class of algorithms is interesting for us is based on its property: at each iteration of such an algorithm, at most one new coordinate can be  revealed. 

\begin{definition}
    We call a stochastic $q$-th order algorithm $A$ zero-respecting if for any function $F $ and any $p$-th order oracle $\text{O}^q_{F}$, the iterates $\{x^t\}_{t\in \mathbb{N}}$ generated by $A$ via querying $\text{O}^q_{F}$ satisfy the following property: \(
        \text{supp}\left\{x^{t}\right\} \subset \bigcup_{i < t} \text{supp}\left\{\text{O}^q_F (x^{i}, \xi^{i})\right\}, \quad \forall~ t \in \mathbb{N} \) 
    with probability one with respect to the randomness of the algorithm and the realizations of $\{\xi^{t}\}_{t \in \mathbb{N}}$.
\end{definition}

\begin{theorem}
\label{thm:lower_bounds}
Let $q\in \mathbb{N}_{\geq 1}$,  and let $\Delta >0, L_{1:q}\eqdef (L_1,\dots, L_q)$, $\sigma_{1:q} \eqdef (\sigma_1,\dots, \sigma_q)  $  and $\varepsilon \leq \cO(\sigma_1)$. Then, there exists $F \in \cF(\Delta, L_{1:q})$ and a corresponding noisy oracle $\text{O}^q_F \in \cO(F, \sigma_{1:q})$  such that for any $q$-th order zero-respecting algorithm, the number of oracle queries required to find an $\varepsilon$-stationary point (with constant probability) is  lower bounded  by 
\begin{equation}
    \Omega(1)\cdot\frac{\Delta}{\varepsilon}\left(\frac{\sigma_1}{\varepsilon}\right)^{\frac{p}{p-1}}  \min\left\{ \min_{r \in \{2,\dots, q\}}\left(\frac{\sigma_{r}}{\sigma_1}\right)^{\frac{1}{r-1}}, \min_{r' \in [q]}\left(\frac{L_{r'}}{\varepsilon}\right)^{\frac{1}{r'}}\right\}.
\end{equation}
Moreover, this lower bounds is realized by a construction of dimension $\Theta\left(\frac{\Delta}{\varepsilon}  \min\left\{ \min_{r \in \{2,\dots, q\}}\left(\frac{\sigma_{r}}{\sigma_1}\right)^{\frac{1}{r-1}}, \min_{r' \in [q]}\left(\frac{L_{r'}}{\varepsilon}\right)^{\frac{1}{r'}}\right\}\right).$
\end{theorem}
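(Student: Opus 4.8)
The plan is to adapt the ``robust zero-chain'' lower-bound technique of \citet{carmon2020lower} --- together with its higher-order oracle refinement \citep{arjevani2020second} --- to the heavy-tailed oracle model, following the probabilistic-exposure idea used by \citet{zhang2020adaptive} for the first-order case and by \citet{arjevani2023lower} for $p=2$. I would start from the unscaled hard instance $\bar f_T:\R^T\to\R$ of \citet{carmon2020lower}, which is $C^\infty$ with all relevant derivative tensors bounded by absolute constants, satisfies $\bar f_T(0)-\inf\bar f_T\le\Delta_0 T$, has $\|\nabla\bar f_T(x)\|\ge g_\infty>0$ whenever $\mathrm{prog}(x)<T$, and --- crucially --- is a \emph{robust} zero-chain: at any $x$ with $\mathrm{prog}(x)=i$ every tensor $\nabla^r\bar f_T(x)$ is supported on $\{1,\dots,i+1\}$, and the only part of it that can reveal coordinate $i+1$ comes from the one summand that first involves $x_{i+1}$ (which also depends on $x_i$), so that part has operator norm $\le c_r$. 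I then rescale, $F(x):=\tfrac{\lambda^2}{\ell}\,\bar f_T(\tfrac{\ell}{\lambda}x)$, and fix $\lambda:=2\varepsilon/g_\infty$ (so that $\varepsilon$-stationarity forces $\mathrm{prog}(x)=T$), $\ell:=c\,\varepsilon\,M$ with $M:=\min\{\min_{r\in\{2,\dots,q\}}(\sigma_r/\sigma_1)^{1/(r-1)},\ \min_{r'\in[q]}(L_{r'}/\varepsilon)^{1/r'}\}$ and $c$ a small absolute constant, and $T:=\lfloor\Delta\ell/(\lambda^2\Delta_0)\rfloor=\Theta(\tfrac{\Delta}{\varepsilon}M)$. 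Using the scaling identity $\mathrm{Lip}(\nabla^r F)=\lambda^{1-r}\ell^r\,\mathrm{Lip}(\nabla^r\bar f_T)$ one verifies $F\in\cF(\Delta,L_{1:q})$ and lower bounded, and the construction dimension is exactly $d=T=\Theta(\tfrac{\Delta}{\varepsilon}M)$, matching the ``moreover'' claim.

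Next I construct the noisy $q$-th order oracle. On a query at $x$, let $i:=\mathrm{prog}(x)$ and let $R^r(x)$ be the contribution to $\nabla^r F(x)$ of the single summand of $\bar f_T$ that first involves coordinate $i+1$; this tensor is supported on $\{i,i+1\}$, satisfies $\|R^r(x)\|_{\mathrm{op}}\le c_r\lambda^{2-r}\ell^{r-1}$, and is the only part of $\nabla^r F(x)$ that can reveal a fresh coordinate. Draw one coin $\xi\sim\mathrm{Bernoulli}(\rho)$ with $\rho:=(C\varepsilon/\sigma_1)^{p/(p-1)}\in(0,1]$, where the membership uses $\varepsilon\le\cO(\sigma_1)$. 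The oracle returns $F(x)$ together with $\nabla^r F(x)-R^r(x)$ for every $r\in[q]$ if $\xi=0$, and $\nabla^r F(x)+(1/\rho-1)R^r(x)$ if $\xi=1$ (realizable by a standard randomization of the coupling terms of $\bar f_T$, cf.\ \citet{arjevani2023lower}). Each component is then unbiased, and the $p$-th moment of the order-$r$ error equals $[(1-\rho)+\rho(1/\rho-1)^p]\,\|R^r(x)\|_{\mathrm{op}}^p\le 2\rho^{1-p}(c_r\lambda^{2-r}\ell^{r-1})^p$. Substituting the choices of $\lambda,\ell,\rho$: the $r=1$ bound holds once $C$ is a large enough absolute constant (and then $\rho\asymp(\varepsilon/\sigma_1)^{p/(p-1)}$), while the $r\ge2$ bounds reduce to $(\ell/\varepsilon)^{r-1}\lesssim\sigma_r/\sigma_1$ and the order-$r'$ Lipschitz constraints to $(\ell/\varepsilon)^{r'}\lesssim L_{r'}/\varepsilon$ --- precisely the terms defining $M$ --- so all hold for $c$ small enough. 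Hence $\text{O}^q_F\in\cO(F,\sigma_{1:q})$; moreover the oracle output is always supported on $\{1,\dots,i+1\}$ and touches coordinate $i+1$ only on the event $\{\xi=1\}$.

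The last step is the standard progress/waiting-time argument. For any zero-respecting algorithm, combining the zero-respecting condition with the support property gives, by induction on $t$, $\mathrm{prog}(x^t)\le N_t:=\sum_{s<t}\one[\xi^s=1]$. Since the $\xi^s$ are i.i.d.\ $\mathrm{Bernoulli}(\rho)$, a multiplicative Chernoff bound and the monotonicity of $N_t$ in $t$ give $\Pr[N_t\ge T]\le\Pr[N_{\lceil T/(2\rho)\rceil}\ge T]\le e^{-T/6}\le\tfrac12$ for all $t\le T/(2\rho)$, where I may assume $T$ exceeds an absolute constant (otherwise the target-accuracy regime makes the stated bound vacuous). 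On the complementary event $\mathrm{prog}(x^t)<T$, so $\|\nabla F(x^t)\|=\lambda\|\nabla\bar f_T(\ell x^t/\lambda)\|\ge\lambda g_\infty=2\varepsilon$ and $x^t$ is not $\varepsilon$-stationary. Therefore, with probability at least $\tfrac12$, at least $T/(2\rho)=\Omega(1)\cdot\tfrac{\Delta}{\varepsilon}\,(\sigma_1/\varepsilon)^{p/(p-1)}M$ oracle queries are required, which is exactly the claimed bound, and the dimension claim is $d=T$.

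I expect the main obstacle to be the multi-parameter bookkeeping in the middle step: one must pick $\lambda,\ell,T,\rho$ so that the $\varepsilon$-stationarity, $\Delta$-boundedness, order-$r$ Lipschitz, and order-$r$ $p$-th moment constraints hold \emph{simultaneously} while $\rho$ --- which governs the waiting time --- is as small as possible. Getting the $\min$ over $r\in\{2,\dots,q\}$ and $r'\in[q]$ to emerge with the correct exponents $\tfrac1{r-1}$ and $\tfrac1{r'}$ hinges on correctly identifying $R^r(x)$ as capturing \emph{all} components of \emph{all} returned derivative tensors that could reveal a fresh coordinate, and on the bound $\|R^r(x)\|_{\mathrm{op}}=O(c_r\lambda^{2-r}\ell^{r-1})$ being uniform in $T$ and in the query point --- both of which rely on the robustness of the Carmon zero-chain under higher-order queries, a point that needs to be checked with care.
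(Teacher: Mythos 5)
Your proposal is correct and follows essentially the same route as the paper's proof: the Carmon et al.\ zero-chain hard instance, rescaled with two parameters chosen so that the $\Delta$-budget, the order-$r'$ Lipschitz constraints, the $\varepsilon$-gradient-norm property, and the order-$r$ $p$-th moment constraints (with a Bernoulli$(\rho)$, $\rho\asymp(\varepsilon/\sigma_1)^{p/(p-1)}$, randomization of the revealing part of the derivatives) hold simultaneously, followed by the standard probability-$\rho$ zero-chain waiting-time argument giving $\Omega(T/\rho)$ queries and dimension $T=\Theta(\tfrac{\Delta}{\varepsilon}M)$. The only differences are cosmetic: you parametrize the scaling by $(\lambda,\ell)$ instead of $(\nu,\beta)$, perturb the single coupling summand rather than all coordinate slices beyond the progress index, and re-derive the waiting-time bound via Chernoff rather than invoking the cited lemma of Arjevani et al.
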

The proof of Theorem~\ref{thm:lower_bounds} is deferred to Appendix~\ref{app:missing_proofs_lower_bounds}. In Theorem~\ref{thm:lower_bounds} we provide lower bounds for any $q \in \mathbb{N}_{\geq 1}$, which corresponds to the order of an oracle used in an optimization algorithm. 
When the oracle order satisfies $q \ge 2$, the algorithm gains access to stochastic curvature information, enabling more accurate prediction of the function or gradient behavior between consecutive iterates. This richer information can potentially lead to tighter worst-case complexity limits compared to first-order methods.
Since the main focus of our work is on the second-order setting, we now discuss the complexity result from Theorem~\ref{thm:lower_bounds} for $q = 2$. For second-order methods, $q=2$, under Assumptions~\ref{assum:l_smooth},~\ref{ass:p_BCM},~\ref{ass:p_BCM_hess} and, additionally, (possibly) assuming the Hessian is Lipschitz continuous with constant $L_h$, we have 
\begin{equation}
\label{eq:lower_bound_second_order}
    \Omega(1)\cdot \min\left\{\frac{\Delta \sigma_h}{\varepsilon^2}\left(\frac{\sigma}{\varepsilon}\right)^{\frac{1}{p-1}}, \frac{\Delta L \sigma}{\varepsilon^{3}} \left(\frac{\sigma}{\varepsilon}\right)^{\frac{1}{p-1}}, \frac{\Delta \sqrt{L_h}\sigma}{\varepsilon^{\nicefrac{5}{2}}} \left(\frac{\sigma}{\varepsilon}\right)^{\frac{1}{p-1}}\right\},
\end{equation}
where the parameters satisfy $\sigma = \sigma_1$, $\sigma_h = \sigma_2$, $L = L_1$, $L_2 = L_h$. We will now discuss each term separately, in different regimes.

\textbf{First-order optimization with Lipschitz gradient.} When we do not have access to the second-order information and the objective function has Lipschitz continuous gradient only, but not any higher derivative, our lower bounds \eqref{eq:lower_bound_second_order} reduces to the second term:
\begin{equation*}
    \Omega\left(\frac{\Delta L \sigma}{\varepsilon^{3}} \left(\frac{\sigma}{\varepsilon}\right)^{\frac{1}{p-1}}\right).
\end{equation*}
This bound exactly matches (up to a numerical constant factor) the previously known lower bounds for first-order stochastic optimization under $p$-BCM \citep{zhang2020adaptive}, and the corresponding upper bound achieved by normalized SGD \citep{Hubler2024clip_to_norm}. Several other algorithms, like Clip-SGD and NSGD with momentum and clipping also nearly match this complexity lower bound, up to additional polylogarithmic terms in $1/\varepsilon$ and polynomial terms in $\delta$, $L$ and $\sigma$ \citep{zhang2020adaptive,cutkosky2021high,nguyen2023improved}.

\paragraph{First-order optimization under higher-order smoothness.} The second case worth discussing is when we (still) do not have access to stochastic Hessian, but we know that the objective function has Lipschitz continuous second-order derivatives. Then our lower bound \eqref{eq:lower_bound_second_order} becomes 
\begin{equation*}
    \Omega\left(\min\left\{\frac{\Delta L \sigma}{\varepsilon^{3}} \left(\frac{\sigma}{\varepsilon}\right)^{\frac{1}{p-1}}, \frac{\Delta \sqrt{L_h}\sigma}{\varepsilon^{\nicefrac{5}{2}}} \left(\frac{\sigma}{\varepsilon}\right)^{\frac{1}{p-1}} \right\}\right).
\end{equation*}
The last term in the above bound nearly matches with the complexity of the method called NIGT with clipping, by \cite{cutkosky2021high}, in terms of its dependence on $1/\varepsilon$, up to logarithmic factors. Unfortunately, there is still a small discrepancy with the upper bound in the dependence on other parameters and with the fact that \citet{cutkosky2021high} use a slightly stronger (non-central) assumption $\EE\left[\norm{\nabla f(x,\xi)}^p\right] \leq G^p$ instead of our $p$-BCM. We believe our bound is tight and can be achieved with a more careful analysis of NIGT type algorithm under the $p$-BCM assumption.

\paragraph{Second-order optimization with only Lipschitz gradient.}
Finally, we wish to discuss an important setting, which is rarely discussed in the literature on second-order optimization. Higher-order smoothness can be challenging to verify in practice. Moreover, even when it is possible, the estimates of $L_h$ can be too large to be useful. Thus we pay attention to the case when we have access to stochastic Hessian, but the second derivative can be non-continuous, i.e., $L_h \rightarrow +\infty$. Our lower bound \eqref{eq:lower_bound_second_order} provides new insights on this interesting setting, simplifying to
\begin{equation*}
    \Omega\left( \min\left\{\frac{\Delta \sigma_h}{\varepsilon^2}\left(\frac{\sigma}{\varepsilon}\right)^{\frac{1}{p-1}}, \frac{\Delta L \sigma}{\varepsilon^{3}} \left(\frac{\sigma}{\varepsilon}\right)^{\frac{1}{p-1}}\right\}\right).
\end{equation*}
As we can see, the first term corresponding to the second-order information has better dependence on an accuracy $\varepsilon$, and can be potentially much smaller than the second term. Moreover, this complexity is not impacted with the constants corresponding to higher-order smoothness, which potentially gives us the possibility of designing second-order methods with complexity that does not depend on $L_h$ and higher order smoothness constants. However, this is merely a lower bound, which does not give us any algorithmic solution yet. The question we investigate in the next section is 
\begin{center}
%\vspace{-0.5cm}
\textit{Can we design an algorithm handling heavy-tailed noise with complexity $\cO\rb{\frac{\Delta \sigma_h}{\varepsilon^2}\left(\frac{\sigma}{\varepsilon}\right)^{\frac{1}{p-1}}}$? }
\end{center}

\section{Near-optimal Second-Order Method under $p$-BCM}
\label{sec:NSGD_SOM_conv}

In the last decade, the {\color{black} second-order stochastic optimization (SOSO)} has been extensively studied under stronger noise assumptions. We will first review the existing approaches, particularly focusing on the development in the non-convex setup. First, \cite{tripuraneni2018stochastic} proposed and analyzed a Stochastic Cubic Newton (SCN) method, achieving $\bar\cO(\varepsilon^{-\nicefrac{7}{2}})$ sample complexity. They require a strong bounded noise assumption, use large mini-batches and importantly, this complexity is not order-optimal. Later \cite{arjevani2020second} established lower bounds for SOSO for finding first-order stationary points and proposed two algorithms: SGD with recursive variance reduction and stochastic Hessian-vector products (HVP-RVR), and subsampled cubic-regularized trust-region method with HVP-RVR. Both algorithms achieve $\bar\cO(\varepsilon^{-3})$  
%{\color{red} $\cO\left(\frac{\Delta \sigma_h\sigma}{\varepsilon^{3}}\right)$} 
sample complexities, which is minimax optimal when the variances of stochastic gradient and Hessian are bounded. Besides strong noise assumptions, their algorithms require large Hessian batch sizes, since they used HVP-RVR subroutine to update the momentum term. To overcome the large batch-size requirement, \cite{BetterSGDUsingSOM_Tran_2021} designed a simpler algorithm: SGD with Hessian-corrected momentum (with optional normalization), called SGDHess. Thanks to the Hessian corrected momentum term, their methods also achieve $\bar\cO(\varepsilon^{-3})$ sample complexity with any batch size $\geq 1$. Recently, \cite{chayti2024improving} proposed a variant of SCN with momentum without a large batch requirement. However, their analysis requires the bounded noise assumption and only achieves $\bar\cO(\varepsilon^{-\nicefrac{7}{2}})$ sample complexity.\footnote{However, in the noiseless case their complexity has better $\bar \cO(\varepsilon^{-\nicefrac{3}{2}})$ iteration complexity compared to $\bar \cO(\varepsilon^{-2})$ for SGDHess.}

To summarize, all above-mentioned works require strong noise assumptions: bounded noise and bounded variance. Moreover, their sample complexities also depend on second-order smoothness $L_h$, which can be potentially infinite, as discussed in the previous section. We refer to Table~\ref{tab:summary} for the summary of existing results. 

\begin{algorithm}[ht!]
\caption{NSGDHess (Normalized SGD with Hessian correction)}
\label{alg:NSGD_SOM}
	\begin{algorithmic}[1]
		\State \textbf{Input:} Starting point $x_0 \in \R^d$, a vector $g_0 \in \R^d$, a stepsize $\gamma > 0$, momentum parameters $\alpha >0$, the number of iterations $T$.
		\State $x_1 = x_0 - \gamma\frac{g_0}{\norm{g_0}}$
		\For{ $t =  1, 2, \ldots, T-1$}
		\State Sample $q_t \sim \cU\left([0,1]\right)$
		\State $\hat{x}_t = q_t x_t + (1-q_t)x_{t-1}$ 
		\State Sample $\xi_t, \hat{\xi}_t \sim \cD$ independently 
		\State $g_t = (1-\alpha)\left( g_{t-1} + \nabla^2 f(\hat{x}_t, \hat{\xi}_t)(x_t - x_{t-1})\right) + \alpha \nabla f(x_t, \xi_t)$
		\State $x_{t+1} = x_t -\gamma \frac{g_t}{\norm{g_t}}$
		\EndFor
		\State \textbf{Output:} 
	\end{algorithmic}
\end{algorithm}

To design an optimal second-order algorithm without these limitations, we take an inspiration from the recent developments in reinforcement learning \citep{salehkaleybar-et-al22,Fatkhullin_SPGM_FND_2023}. Their algorithms called SHARP and (N)HAR-PG are variants of SGDHess in \citep{BetterSGDUsingSOM_Tran_2021} with optional normalization step. We adapt one variant of their method to our general stochastic optimization setting and present it in Algorithm~\ref{alg:NSGD_SOM}. This algorithm computes a random interpolation point $\hat x_t$ uniformly distributed between consecutive iterates $x_{t-1}$ and $x_{t}$. The method then evaluates a stochastic gradient and a stochastic Hessian-vector product using independent samples $\xi_t$ and $\hat \xi_t$. Finally, a recursive momentum is constructed before applying the normalization step.\footnote{Normalization step for this method is important for our analysis under $p$-BCM. However, it can be removed when $p=2$, or when additional clipping is used, as in the next section.} We are now ready to state the convergence guarantee for Algorithm~\ref{alg:NSGD_SOM}.

%%%%%%%%%%%%%%%%%%%%%%%%%%%%%%%%%%%%%%%%%%%%%%%%%%%%%%%%%%%%%%%%%%%%%%%%%%%%%%%%%%%%%%%%%%%%%%%%%%%%%%%%%%%%%%%%%%%%%

\begin{theorem}
\label{thm:nsgdm_hess}
    Suppose Assumptions~\ref{assum:lower_bounded},~\ref{assum:l_smooth},~\ref{ass:p_BCM} and ~\ref{ass:p_BCM_hess} hold. Let the initial gradient estimate be given by   $$g_0 = \frac{1}{B_{\text{init}}}\sum^{B_{\text{init}}}_{j = 1}\nabla f(x_0, \xi_{0, j}),~~\text{where}~~B_{\text{init}} = \max\left\{1, \left(\frac{\sigma}{\varepsilon}\right)^{\frac{p}{p-1}}, \left(\frac{\sigma}{\varepsilon}\right)^{\frac{p}{2p-1}}\right\}.$$ Set 
    stepsize as $\gamma = \sqrt{\frac{\Delta \alpha^{\nicefrac{1}{p}}}{ (L + \sigma_h)  T}}$, momentum parameter as $\alpha = \min\left\{1,\alpha_{\text{eff}}\right\}$, where $\alpha_{\text{eff}} = \max\left\{\left(\frac{\cE_0}{T\sigma}\right)^{\frac{p}{2p-1}}, \left(\frac{\Delta(L+\sigma_h)}{T\sigma^2}\right)^{\frac{p}{2p-1}}\right\}$, $\cE_0 = 2\sigma/ B_{\text{init}}^{\frac{p-1}{p}}$. Then, Algorithm~\ref{alg:NSGD_SOM} guarantees that $\frac{1}{T}\sum^{T-1}_{t=0}\Exp{\norm{\nabla F(x_t)}} \leq \varepsilon$ with total sample complexity
    \begin{equation}
    \label{eq:convergence_rate_thm}
        \cO\left(\frac{\Delta (L +\sigma_h)}{\varepsilon^2} + \frac{\Delta (\sm + \sigma_h)}{\varepsilon^2} \left(\frac{\sigma}{\varepsilon}\right)^{\frac{1}{p-1}} + \frac{\sigma}{\varepsilon}\left(\frac{\sigma}{\varepsilon}\right)^{\frac{1}{p-1}} \right) .
    \end{equation}
\end{theorem}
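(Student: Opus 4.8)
The plan is to combine the standard normalized-gradient descent inequality with a $p$-th moment analysis of the Hessian-corrected recursive estimator. Write $\epsilon_t \eqdef g_t - \nabla F(x_t)$ for the estimation error, and let $\cF_{t-1}$ denote all the randomness generated before the fresh samples $q_t,\xi_t,\hat\xi_t$ are drawn, so that $x_t$ is $\cF_{t-1}$-measurable. By Assumption~\ref{assum:l_smooth} and $x_{t+1}-x_t=-\gamma g_t/\norm{g_t}$, together with the elementary inequality $\norm{u}-\langle u, v/\norm{v}\rangle\le 2\norm{u-v}$ (applied with $u=\nabla F(x_t)$, $v=g_t$), one obtains $\gamma\norm{\nabla F(x_t)}\le F(x_t)-F(x_{t+1})+2\gamma\norm{\epsilon_t}+\tfrac{L\gamma^2}{2}$. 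Summing over $t=0,\dots,T-1$, telescoping, using $F(x_T)\ge F^*$ (Assumption~\ref{assum:lower_bounded}), and dividing by $\gamma T$, the whole problem reduces to controlling $\tfrac1T\sum_t\E\norm{\epsilon_t}$, since $\tfrac1T\sum_t\E\norm{\nabla F(x_t)}\le \tfrac{\Delta}{\gamma T}+\tfrac2T\sum_t\E\norm{\epsilon_t}+\tfrac{L\gamma}{2}$.

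Subtracting $\nabla F(x_t)$ from the update for $g_t$ gives the recursion $\epsilon_t=(1-\alpha)\epsilon_{t-1}+(1-\alpha)S_t+\alpha U_t$, with $U_t\eqdef\nabla f(x_t,\xi_t)-\nabla F(x_t)$ and $S_t\eqdef\nabla^2 f(\hat x_t,\hat\xi_t)(x_t-x_{t-1})-(\nabla F(x_t)-\nabla F(x_{t-1}))$. Two facts drive the analysis. First, both $S_t$ and $U_t$ are mean-zero given $\cF_{t-1}$: for $U_t$ this is Assumption~\ref{ass:p_BCM}, and for $S_t$ it uses unbiasedness of the stochastic Hessian (Assumption~\ref{ass:p_BCM_hess}) together with the fact that $\hat x_t$ is uniform on $[x_{t-1},x_t]$, so that $\E_{q_t}[\nabla^2 F(\hat x_t)](x_t-x_{t-1})=\int_0^1\nabla^2 F(x_{t-1}+q(x_t-x_{t-1}))\,dq\,(x_t-x_{t-1})=\nabla F(x_t)-\nabla F(x_{t-1})$ by the fundamental theorem of calculus. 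Second, normalization forces $\norm{x_t-x_{t-1}}=\gamma$, hence $\norm{S_t}\le\gamma\norm{\nabla^2 f(\hat x_t,\hat\xi_t)-\nabla^2 F(\hat x_t)}_{\mathrm{op}}+2L\gamma$ using only $\norm{\nabla^2 F}_{\mathrm{op}}\le L$ (so \emph{no} Hessian-Lipschitz assumption is invoked), which gives $(\E\norm{S_t}^p)^{1/p}\le c\,\gamma(L+\sigma_h)$ by Assumption~\ref{ass:p_BCM_hess} and $(\E\norm{U_t}^p)^{1/p}\le\sigma$ by Assumption~\ref{ass:p_BCM}.

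Unrolling, $\epsilon_t=(1-\alpha)^t\epsilon_0+\sum_{s=1}^t(1-\alpha)^{t-s}\delta_s$ with $\delta_s\eqdef(1-\alpha)S_s+\alpha U_s$ a martingale-difference sequence obeying $(\E\norm{\delta_s}^p)^{1/p}\lesssim\gamma(L+\sigma_h)+\alpha\sigma$. A von Bahr--Esseen / martingale-type-$p$ inequality for Hilbert-space-valued increments, $\E\norm{\sum_s a_s\delta_s}^p\le C_p\sum_s|a_s|^p\E\norm{\delta_s}^p$ for $p\in(1,2]$, combined with $\sum_s(1-\alpha)^{(t-s)p}\le(1-(1-\alpha)^p)^{-1}\le\alpha^{-1}$, yields $(\E\norm{\epsilon_t}^p)^{1/p}\lesssim(1-\alpha)^t\cE_0+\alpha^{-1/p}\gamma(L+\sigma_h)+\alpha^{(p-1)/p}\sigma$, where $\cE_0$ bounds $(\E\norm{\epsilon_0}^p)^{1/p}$ via the same inequality applied to the initial mini-batch (giving exactly $\cE_0=2\sigma/B_{\mathrm{init}}^{(p-1)/p}$). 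Summing over $t$, using $\sum_t(1-\alpha)^t\le\alpha^{-1}$ and Jensen, gives $\tfrac1T\sum_t\E\norm{\epsilon_t}\lesssim\tfrac{\cE_0}{\alpha T}+\alpha^{-1/p}\gamma(L+\sigma_h)+\alpha^{(p-1)/p}\sigma$.

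Plugging this into the descent bound and absorbing $L\gamma$ into $\alpha^{-1/p}\gamma(L+\sigma_h)$ (valid since $\alpha\le1$), the per-iteration error is $\lesssim\tfrac{\Delta}{\gamma T}+\alpha^{-1/p}\gamma(L+\sigma_h)+\tfrac{\cE_0}{\alpha T}+\alpha^{(p-1)/p}\sigma$. The choice $\gamma=\sqrt{\Delta\alpha^{1/p}/((L+\sigma_h)T)}$ balances the first two terms into $2\sqrt{\Delta(L+\sigma_h)/(T\alpha^{1/p})}$, and $\alpha=\min\{1,\alpha_{\mathrm{eff}}\}$ with the stated $\alpha_{\mathrm{eff}}$ is precisely the value balancing this against $\cE_0/(\alpha T)$ and $\alpha^{(p-1)/p}\sigma$; imposing that the right-hand side be at most $\varepsilon$ and solving for $T$ in the two cases $\alpha_{\mathrm{eff}}<1$ (where the bound collapses to $\lesssim\alpha_{\mathrm{eff}}^{(p-1)/p}\sigma$) and $\alpha_{\mathrm{eff}}\ge1$ (which forces $\alpha=1$, occurring only when $\sigma\lesssim\varepsilon$, $B_{\mathrm{init}}=1$, and leaving $\lesssim\sqrt{\Delta(L+\sigma_h)/T}+\cE_0/T+\sigma$) produces the three requirements $T\gtrsim\tfrac{\Delta(L+\sigma_h)}{\varepsilon^2}$, $T\gtrsim\tfrac{\Delta(L+\sigma_h)}{\varepsilon^2}(\sigma/\varepsilon)^{1/(p-1)}$, $T\gtrsim\tfrac{\sigma}{\varepsilon}(\sigma/\varepsilon)^{1/(p-1)}$; adding the $B_{\mathrm{init}}=\cO((\sigma/\varepsilon)^{p/(p-1)})$ initialization samples (dominated by the last term) and counting two oracle calls per iteration yields \eqref{eq:convergence_rate_thm}. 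The main obstacle is the $p$-th moment step: for $p<2$ one loses the $L^2$-orthogonality of martingale increments, so the geometrically weighted noise must be handled through the vector-valued von Bahr--Esseen inequality, and care is needed so that the $\gamma(L+\sigma_h)$ and $\alpha\sigma$ contributions enter with exactly the exponents $\alpha^{-1/p}$ and $\alpha^{(p-1)/p}$ — these dictate the final parameter choices. A secondary subtlety is the initialization bookkeeping, namely making $\cE_0$ small enough that $\cE_0/(\alpha T)$ never becomes the bottleneck, which is what forces the particular form of $B_{\mathrm{init}}$.
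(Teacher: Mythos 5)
Your proposal is correct and follows essentially the same route as the paper's proof: the normalized-SGD descent inequality, the recursive error decomposition with the unbiased Hessian-vector correction justified by the uniform interpolation point and the fundamental theorem of calculus, the von Bahr--Esseen inequality to control the geometrically weighted $p$-th moments (yielding the $\alpha^{-1/p}\gamma(L+\sigma_h)$ and $\alpha^{(p-1)/p}\sigma$ terms), and the same parameter balancing plus mini-batch initialization bookkeeping with $\cE_0 = 2\sigma/B_{\text{init}}^{(p-1)/p}$. The only differences (merging the two noise contributions into a single martingale-difference sequence and tracking $(\E\norm{\epsilon_t}^p)^{1/p}$ rather than $\E\norm{\epsilon_t}$) are cosmetic.
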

The proof is deferred to Appendix~\ref{app:missing_proofs_NSGD_SOM}. We also investigate different choices of $g_0$ and their influence on the total sample complexity rate in Appendix~\ref{app:missing_proofs_NSGD_SOM}. We find that  while the initial batch size to estimate $g_0$ is not necessary for convergence, it  helps to slightly improve the total sample complexity.

\paragraph{Discussion:} 
Comparing this result with Theorem~\ref{thm:lower_bounds}, we observe that our upper bound \eqref{eq:convergence_rate_thm} matches our lower bound--the first term in \eqref{eq:lower_bound_second_order} in terms of the target accuracy $\varepsilon$. Moreover, when $\Delta(L+\sigma_h) \geq \sigma \varepsilon$, we have a tight upper bound, which exactly matches the lower bound from the previous section in the leading term (up to a numerical constant). Moreover, since our upper bound does not depend on constant $L_h$, it explains our observations in Section~\ref{sec:lower_bounds} regarding the limit case $L_h \rightarrow \infty, $ and answers the raised question affirmatively. We refer to Table~\ref{tab:summary} for more technical comparison with prior work.

%%%%%%%%%%%%%%%%%%%%%%%%%%%%%%%%%%%%%%%%%%%%%%%%%%%%%%%%

\section{Hessian Clipping for High-probability Convergence}
\label{sec:high_probability_analysis}

In Section~\ref{sec:NSGD_SOM_conv} we provided an in-expectation guarantee for Algorithm~\ref{alg:NSGD_SOM}. While in-expectation guarantees are useful in the case when we are allowed to run a method multiple times to analyze its average-case behavior, it can be impractical. In may real-world scenarios,  only a single run of a methods can be performed due to computational or time constraints. Thus, our main goal in this section is to conduct high-probability analysis for Algorithm~\ref{alg:NSGD_SOM} or its modification. 

Since we work in the unbounded variance case, which corresponds to heavy-tailed noise, it is important to ensure that the analyzed method is robust to such noise. Following works of \cite{gorbunov2020stochastic,cutkosky2021high, sadiev2023high, nguyen2023improved}, we propose a new algorithm, called Normalized SGD with momentum and Hessian clipping. It is formally presented as  Algorithm~\ref{alg:NSGD_SOM_clipped}, where compared to Algorithm~\ref{alg:NSGD_SOM} we also incorporate gradient clipping and Hessian clipping. Formally, a clipping operator (clipping for short) is defined as 
\begin{equation}\label{eq:clipping_def}
    \texttt{clip}(v, \lambda) = \min\left\{1, \frac{\lambda}{\norm{v}}\right\} v \quad \text{for any } v \neq 0 \text{ from } \R^d, 
\end{equation}
where $\lambda >0$ is called the clipping level/threshold. 

\begin{algorithm}[ht!]
\caption{Clip NSGDHess (Normalized SGD with Hessian correction and clipping)}
\label{alg:NSGD_SOM_clipped}
	\begin{algorithmic}[1]
		\State \textbf{Input:} Starting point $x_0 \in \R^d$, a vector $g_0 \in \R^d$, a stepsize $\gamma > 0$, momentum parameters $\alpha >0$, clipping levels $\lambda > 0 , \bar \lambda_h > 0$, the number of iterations $T$.
		\State $x_1 = x_0$ and $g_0 = 0$
		\For{ $t =  1, 2, \ldots, T-1$}
		\State Sample $q_t \sim \cU\left([0,1]\right)$
		\State $\hat{x}_t = q_t \, x_t + (1-q_t) \, x_{t-1}$ 
		\State Sample $\xi_t, \hat{\xi}_t \sim \cD$ independently 
		\State $g_t = (1-\alpha)\left( g_{t-1} + \gamma \, \texttt{clip}\left(\gamma^{-1} \nabla^2 f(\hat{x}_t, \hat{\xi}_t)(x_t - x_{t-1}), \bar \lambda_h \right)\right) + \alpha \, \texttt{clip}\left(\nabla f(x_t, \xi_t), \lambda\right) $
		\State $x_{t+1} = x_t -\gamma \frac{g_t}{\norm{g_t}}$
		\EndFor
		\State \textbf{Output:} 
	\end{algorithmic}
\end{algorithm}

As in Algorithm~\ref{alg:NSGD_SOM}, we use normalization in the gradient update (line 8), which allows the next point $x_{t+1}$ to stay in the ball with center at $x_t$ of radius $\gamma$. The main difference between Algorithms~\ref{alg:NSGD_SOM} and~\ref{alg:NSGD_SOM_clipped} is in the momentum term. To enhance robustness, we do not only clip the gradient, but also the Hessian-vector product. As we mentioned above, gradient clipping is a common approach, but Hessian clipping is new, and it allows us to provide the first high-probability guarantees for second-order stochastic optimization without light-tail noise assumptions. It is worth to draw attention to the term
\begin{equation}\label{eq:hessian_clipping_term}
\gamma \, \texttt{clip}\left(\gamma^{-1} \nabla^2 f(\hat{x}_t, \hat{\xi}_t)(x_t - x_{t-1}), \bar \lambda_h \right).
\end{equation}
Observe that clipping the entire Hessian based on its operator norm is very costly since it typically requires $\cO(d^2)$ arithmetic operations (e.g., using power iteration \citep{golub2013matrix} or Lanczos algorithm \citep{lanczos1950iteration,saad2011numerical}). 
Instead of clipping the stochastic Hessian, we clip the Hessian-vector product, which is computationally tractable, and only requires $\cO(d)$ operations. Thus, the computation of this clipped Hessian-vector product can be easily implemented via backpropagation and subsequent ``vector'' clipping \eqref{eq:clipping_def}. Another reason why we prefer to use this form of clipping comes from the analysis; we want to preserve the following useful property: 
\begin{eqnarray}\label{eq:expectation_of_hv}
    \EE_{q_t, \hat{\xi}_t}\left[\nabla^2 f(\hat{x}_t, \hat{\xi}_t)(x_t - x_{t-1})\right] &=& \EE_{q_t}\left[\EE_{ \hat{\xi}_t}\left[\nabla^2 f(\hat{x}_t, \hat{\xi}_t)\right](x_t - x_{t-1})\right] \notag \\
    &=& \EE_{q_t}\left[\nabla^2 F(\hat{x}_t)(x_t - x_{t-1})\right] \notag \\
    &=&\int^1_0 \nabla^2 F(q x_t + (1-q)x_{t-1})(x_t - x_{t-1})dq \notag \\
    &=& \nabla F(x_t) - \nabla F(x_{t-1}). 
\end{eqnarray}
This would not hold were we to use direct Hessian matrix clipping. Instead, we prove that the expectation of \eqref{eq:hessian_clipping_term} will approximately be equal to \eqref{eq:expectation_of_hv} when $\bar \lambda_h$ is selected properly. By smoothness and gradient step, we have $\|\nabla F(x_t) - \nabla F(x_{t-1})\| \leq L\|x_t-x_{t-1}\|= L\gamma$, meaning the expectation is bounded. 
To provide the analysis, we need to rewrite the clipped Hessian-vector product as follows 
$$\gamma \, \texttt{clip}\left(\gamma^{-1} \nabla^2 f(\hat{x}_t, \hat{\xi}_t)(x_t - x_{t-1}), \bar \lambda_h \right) = \texttt{clip}\left( \nabla^2 f(\hat{x}_t, \hat{\xi}_t)(x_t - x_{t-1}), \gamma \bar \lambda_h \right),$$
where we denote $\lambda_h = \gamma \bar \lambda_h$. The former formulation in the above formula is useful for implementation, since as we will see, $\lambda_{h} \sim \cO(\gamma\alpha^{-\nicefrac{1}{p}})$ and $\lambda \sim \cO(\alpha^{-\nicefrac{1}{p}})$. Thus, $\lambda$ and $\bar{\lambda}_h$ are of the same order $\sim\cO(\alpha^{-\nicefrac{1}{p}}) = \cO(T^{\frac{1}{2p-1}})$, which makes it easier to tune these two parameters together in practice. The latter formulation will be used in the high-probability analysis. We are now ready to state the main theorem of this section.

\begin{theorem}
\label{thm:main_theorem_hp}
Let Assumptions~\ref{assum:lower_bounded},\ref{assum:l_smooth},\ref{ass:p_BCM}, and~\ref{ass:p_BCM_hess} hold, and define the initial optimality gap as $\Delta_1 := F(x_0) - F_*$. Let $\delta \in (0,1]$ and $T \geq 1$ be such that $\log(8T/\delta) \geq 1$.
Suppose Algorithm~\ref{alg:NSGD_SOM_clipped} is executed with the following settings:  momentum parameter $\alpha = T^{-\frac{p}{2p - 1}}$,  clipping levels
        $\lambda = \max\left\{4\sqrt{L\Delta_1}, \; \frac{\sigma}{\alpha^{1/p}} \right\},
    ~~
        \lambda_h = \frac{2\gamma(L + \sigma_h)}{\alpha^{1/p}},$
 stepsize $\gamma$ satisfying:
    \begin{eqnarray*}
        \gamma \leq \cO\left(\min \Bigg\{
        \sqrt{\frac{\Delta_1}{LT}}, \;
       \alpha \sqrt{\frac{\Delta_1}{L}}, \;
        \frac{\sqrt{\nicefrac{\Delta_1}{L}}}{\alpha T \log(T/\delta)} , 
        \frac{\Delta_1}{ \sigma \alpha^{\frac{p-1}{p}} T \log(T/\delta)}, \;
        \sqrt{ \frac{\Delta_1 \alpha^{1/p}}{(L + \sigma_h)T \log(T/\delta)} }
        \Bigg\}\right).
    \end{eqnarray*}
Then, with probability at least $1 - \delta$, the output of the algorithm satisfies:
$\frac{1}{T} \sum_{t=0}^{T-1} \|\nabla F(x_t)\| \leq \frac{2\Delta_1}{\gamma T}.$
As a result, the gradient norm converges at the rate
\begin{equation*}
    \frac{1}{T} \sum_{t=0}^{T-1} \|\nabla F(x_t)\|
        = \mathcal{O} \left(
        \frac{ \sqrt{\Delta_1(L + \sigma_h)}+ \sigma }{T^{\frac{p - 1}{2p - 1}}}
        \log \left( \frac{T}{\delta} \right)
        \right)
\end{equation*}
with high probability.
\end{theorem}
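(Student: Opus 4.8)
\emph{Overview and Step 1 (descent).} The plan is the classical potential-decrease argument for a normalized step, which isolates the momentum error $e_t := g_t - \nabla F(x_t)$, followed by a high-probability bound on $\sum_t\|e_t\|$ obtained by splitting the clipped gradient and clipped Hessian-vector corrections into a deterministic bias and a bounded martingale-difference noise, exactly as in the high-probability analyses of clipped SGD. Concretely, Assumption~\ref{assum:l_smooth} and the update $x_{t+1}=x_t-\gamma g_t/\|g_t\|$, together with $\langle\nabla F(x_t),g_t/\|g_t\|\rangle\ge\|\nabla F(x_t)\|-2\|e_t\|$, give $F(x_{t+1})\le F(x_t)-\gamma\|\nabla F(x_t)\|+2\gamma\|e_t\|+\tfrac{L\gamma^2}{2}$; telescoping and using Assumption~\ref{assum:lower_bounded} yields $\gamma\sum_t\|\nabla F(x_t)\|\le\Delta_1+\tfrac{L\gamma^2T}{2}+2\gamma\sum_t\|e_t\|$. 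Since $\gamma\le\mathcal{O}(\sqrt{\Delta_1/(LT)})$ makes the quadratic term at most $\tfrac{\Delta_1}{4}$, the claimed $\tfrac1T\sum_t\|\nabla F(x_t)\|\le\tfrac{2\Delta_1}{\gamma T}$ reduces to proving $\sum_t\|e_t\|\le\tfrac{\Delta_1}{8\gamma}$ with probability $\ge1-\delta$ (the off-by-one caused by $x_1=x_0$ contributes only an extra $\gamma\|\nabla F(x_0)\|\le\gamma\sqrt{2L\Delta_1}\le\tfrac{\Delta_1}{2}$, absorbed in the constant $2$).

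\emph{Step 2 (error recursion and bias/noise split).} Subtracting $\nabla F(x_t)=(1-\alpha)\bigl(\nabla F(x_{t-1})+(\nabla F(x_t)-\nabla F(x_{t-1}))\bigr)+\alpha\nabla F(x_t)$ from the momentum update, and using $\lambda_h=\gamma\bar\lambda_h$, gives $e_t=(1-\alpha)e_{t-1}+(1-\alpha)\theta_t+\alpha\zeta_t$ with $\zeta_t=\texttt{clip}(\nabla f(x_t,\xi_t),\lambda)-\nabla F(x_t)$ and $\theta_t=\texttt{clip}(\nabla^2 f(\hat x_t,\hat\xi_t)(x_t-x_{t-1}),\lambda_h)-(\nabla F(x_t)-\nabla F(x_{t-1}))$. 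Letting $\mathcal{F}_{t-1}$ be the history before drawing $q_t,\xi_t,\hat\xi_t$, decompose $\zeta_t=\zeta_t^{\mathrm b}+\zeta_t^{\mathrm n}$ and $\theta_t=\theta_t^{\mathrm b}+\theta_t^{\mathrm n}$ into the clipping bias (deterministic given $\mathcal{F}_{t-1}$) and a conditionally mean-zero martingale increment. Two facts drive the estimates: by \eqref{eq:expectation_of_hv} the unclipped Hessian-vector product has conditional mean $\nabla F(x_t)-\nabla F(x_{t-1})$ and, combining the operator-norm noise $\sigma_h\|x_t-x_{t-1}\|=\sigma_h\gamma$ from Assumption~\ref{ass:p_BCM_hess} with the deterministic spread $\le L\gamma$ induced by the random interpolation point, its conditional $p$-th central moment is $\le\bigl((L+\sigma_h)\gamma\bigr)^p$, while $\|\nabla F(x_t)-\nabla F(x_{t-1})\|\le L\gamma\le\tfrac12\lambda_h$ by smoothness; and $\|\nabla F(x_t)\|\le\sqrt{2L\Delta_t}\le\tfrac12\lambda$ as soon as $\Delta_t\le2\Delta_1$. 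The standard clipping lemma then gives $\|\zeta_t^{\mathrm b}\|\lesssim\sigma^p\lambda^{1-p}$, $\mathbb{E}[\|\zeta_t^{\mathrm n}\|^2\mid\mathcal{F}_{t-1}]\lesssim\lambda^{2-p}\sigma^p$, $\|\zeta_t^{\mathrm n}\|\le2\lambda$, and $\|\theta_t^{\mathrm b}\|\lesssim\bigl((L+\sigma_h)\gamma\bigr)^p\lambda_h^{1-p}$, $\mathbb{E}[\|\theta_t^{\mathrm n}\|^2\mid\mathcal{F}_{t-1}]\lesssim\lambda_h^{2-p}\bigl((L+\sigma_h)\gamma\bigr)^p$, $\|\theta_t^{\mathrm n}\|\le2\lambda_h$. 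Unrolling, $e_t=(1-\alpha)^t e_0+B_t+M_t$, with $B_t=\sum_{s\le t}(1-\alpha)^{t-s}\bigl((1-\alpha)\theta_s^{\mathrm b}+\alpha\zeta_s^{\mathrm b}\bigr)$ deterministic and $M_t=\sum_{s\le t}(1-\alpha)^{t-s}\bigl((1-\alpha)\theta_s^{\mathrm n}+\alpha\zeta_s^{\mathrm n}\bigr)$.

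\emph{Step 3 (assembling and the rate).} Recalling $g_0=0$, we have $\|e_0\|=\|\nabla F(x_0)\|\le\sqrt{2L\Delta_1}$ and $\sum_t(1-\alpha)^t\le\alpha^{-1}$, controlled by $\gamma\le\mathcal{O}(\alpha\sqrt{\Delta_1/L})$. With $\sum_{s\le t}(1-\alpha)^{t-s}\le\alpha^{-1}$ and the prescribed $\lambda\ge\sigma\alpha^{-1/p}$, $\lambda_h=2\gamma(L+\sigma_h)\alpha^{-1/p}$, a direct computation gives $\|B_t\|\lesssim\sigma\alpha^{\frac{p-1}{p}}+\gamma(L+\sigma_h)\alpha^{-1/p}$. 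For the noise term, for each fixed $t$ the sequence $(1-\alpha)^{t-s}\bigl((1-\alpha)\theta_s^{\mathrm n}+\alpha\zeta_s^{\mathrm n}\bigr)$ is a martingale difference in $s$ (the cross term vanishes by conditional independence of $\xi_s$ and $(q_s,\hat\xi_s)$) with increments bounded by $b\asymp\max\{\alpha\lambda,\lambda_h\}$ and conditional quadratic variation $\le\tfrac1\alpha\bigl(\alpha^2\lambda^{2-p}\sigma^p+\lambda_h^{2-p}((L+\sigma_h)\gamma)^p\bigr)=:V$ (using $1-(1-\alpha)^2\ge\alpha$); a vector-valued Freedman/Bernstein bound plus a union bound over $t\in[T]$ gives $\max_{t\le T}\|M_t\|\lesssim(\sqrt V+b)\log(T/\delta)\lesssim\bigl(\sigma\alpha^{\frac{p-1}{p}}+\alpha\sqrt{L\Delta_1}+\gamma(L+\sigma_h)\alpha^{-1/p}\bigr)\log(T/\delta)$. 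Hence $\sum_t\|e_t\|\lesssim\alpha^{-1}\sqrt{L\Delta_1}+T\bigl(\sigma\alpha^{\frac{p-1}{p}}+\alpha\sqrt{L\Delta_1}+\gamma(L+\sigma_h)\alpha^{-1/p}\bigr)\log(T/\delta)$, and the four remaining stepsize conditions are exactly those needed to make $2\gamma$ times each of these four summands at most $\tfrac{\Delta_1}{8}$, so $\sum_t\|e_t\|\le\tfrac{\Delta_1}{8\gamma}$ and $\tfrac1T\sum_t\|\nabla F(x_t)\|\le\tfrac{2\Delta_1}{\gamma T}$. Substituting $\alpha=T^{-\frac{p}{2p-1}}$ and taking $\gamma$ as large as allowed — the binding conditions being $\gamma\asymp\Delta_1/(\sigma\alpha^{\frac{p-1}{p}}T\log(T/\delta))$ and $\gamma\asymp\sqrt{\Delta_1\alpha^{1/p}/((L+\sigma_h)T\log(T/\delta))}$ — gives $\tfrac{2\Delta_1}{\gamma T}=\mathcal{O}\bigl((\sqrt{\Delta_1(L+\sigma_h)}+\sigma)\,T^{-\frac{p-1}{2p-1}}\log(T/\delta)\bigr)$.

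\emph{Main obstacle.} The crux is the circular dependence: the bias/variance estimates of Step 2 require $\|\nabla F(x_t)\|\le\lambda/2$, i.e.\ $\Delta_t\le2\Delta_1$, which is precisely the conclusion of Step 1, so neither can be invoked before the other. I would break it via a stopping time $\tau:=\min\{t\ge1:\Delta_t>2\Delta_1\}$: the Step-2 bounds hold at every step $s\le\tau$, so the truncated increments $\bigl((1-\alpha)\theta_s^{\mathrm n}+\alpha\zeta_s^{\mathrm n}\bigr)\mathbf{1}\{s\le\tau\}$ still form a martingale-difference sequence with the same \emph{deterministic} variance and increment bounds, to which Freedman applies for the stopped partial sums (which coincide with $M_t$ on $\{t\le\tau\}$); on the resulting probability-$(1-\delta)$ event, Steps 1--3 force $\Delta_t\le\tfrac32\Delta_1<2\Delta_1$ for all $t\le\min\{T,\tau\}$, which contradicts $\tau\le T$, hence $\tau>T$ and all the bounds are valid on $[T]$ after the fact. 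A minor additional nuisance is that $M_t$ is an AR(1)-type process rather than a martingale because of the $t$-dependent geometric weights $(1-\alpha)^{t-s}$, which is why each $M_t$ must be concentrated separately — producing the $\log T$ factor in the union bound, and hence the logarithmic term in the final rate.
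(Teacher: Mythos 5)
Your proposal is correct and follows the same skeleton as the paper's proof: the normalized-step descent inequality (the paper's Lemma~\ref{lem:descent_lemma_clipped_sgd}), the momentum-error recursion with geometric weights, the bias/noise split of the clipped gradient and clipped Hessian--vector terms via the clipping lemma (Lemma~\ref{lem:bound_variance_magnitude}, requiring $\|\nabla F(x_t)\|\le \lambda/2$ and $L\gamma\le\lambda_h/2$), per-iteration concentration of the weighted sums with a union bound over $t\in[T]$, and the same matching of the five stepsize constraints to the five error summands, yielding the identical final rate after substituting $\alpha=T^{-p/(2p-1)}$. The only genuine differences are technical substitutions within that skeleton: (i) for the noise term you invoke a vector-valued Freedman/Bernstein inequality, whereas the paper reduces each vector sum to scalar martingales via Cutkosky's construction (Lemma~\ref{lem:inequlity_from_cutkosky}) and then applies scalar Bernstein (Lemma~\ref{lem:Bernstain_inequality}) twice per $t$ --- your route is shorter but you must cite a dimension-free Hilbert-space Freedman inequality (e.g.\ Pinelis-type) for it to be rigorous; and (ii) you resolve the circularity between ``$\Delta_t\le 2\Delta_1$'' and the clipping bounds via a stopping time $\tau$ and truncated increments, whereas the paper runs an induction over $\tau$ with Bernstein events stated in the ``either the bound holds or the variance budget is exceeded'' form --- these are interchangeable devices, and your stopping-time version is arguably cleaner. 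One small fix in your truncation: since $\Delta_\tau>2\Delta_1$ at the stopping index itself, the indicator should be $\mathbf{1}\{s<\tau\}$ (which is still predictable, hence preserves the martingale-difference property) so that the bounds $\|\nabla F(x_s)\|\le\lambda/2$ are actually available at every truncated step; this does not affect the contradiction argument, which only needs the clipping estimates at steps strictly before $\tau$.
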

The complete proof of Theorem~\ref{thm:main_theorem_hp} can be found in Appendix~\ref{app:missing_proofs_hp}. Disregarding the parameters $L, \sigma, \sigma_h, \Delta_1 $ in the choices of  the stepsize  and the clipping level, we see that the momentum parameter and the stepsize decrease with the same rate $\sim \cO(\nicefrac{1}{T^{\frac{p}{p-1}}})$, similarly to Algorithm~\ref{alg:NSGD_SOM}. Also, as we mentioned above, clipping levels $\lambda$ and $\bar \lambda_h$ have the same rate, and behave as $\sim \cO({T^{\frac{1}{2p-1}}})$. 

\begin{corollary}
    \label{cor:high_probability}
    In the setting of Theorem~\ref{thm:main_theorem_hp}, Algorithm~\ref{alg:NSGD_SOM_clipped} ensures that $\frac{1}{T}\sum^{T-1}_{t=0}\|\nabla F(x_t)\| \leq \varepsilon$, with probability at least $1-\delta$. To achieve this, the algorithm requires at most
    \begin{equation*}
         \cO\left(\left(\frac{\sqrt{\Delta_1(L+\sigma_h)}+ \sigma}{\varepsilon}\log \frac{1}{\delta\varepsilon}\right)^{\frac{2p-1}{p-1}}\right)\quad \text{first/second-order oracle calls.}
    \end{equation*}  
\end{corollary}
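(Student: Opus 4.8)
The plan is to obtain Corollary~\ref{cor:high_probability} directly from the high-probability rate of Theorem~\ref{thm:main_theorem_hp} by inverting it in $T$ and then counting oracle calls. Write $C \eqdef \sqrt{\Delta_1(L+\sigma_h)}+\sigma$ for the problem-dependent prefactor and $s \eqdef \frac{p-1}{2p-1}\in(0,\tfrac12]$ for the rate exponent. Instantiating the stepsize $\gamma$ as the minimum of the five upper bounds in Theorem~\ref{thm:main_theorem_hp}, that theorem guarantees, with probability at least $1-\delta$,
\[
\frac1T\sum_{t=0}^{T-1}\|\nabla F(x_t)\| \;\le\; K\,\frac{C}{T^{s}}\log\!\Big(\frac{T}{\delta}\Big)
\]
for an absolute constant $K$. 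Hence it suffices to pick $T$ so that the right-hand side is at most $\varepsilon$, i.e. so that $T^{s}\ge a\log(T/\delta)$ with $a\eqdef KC/\varepsilon$, after which we translate the iteration bound into an oracle-call bound.

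First I would reduce the transcendental inequality $T^{s}\ge a\log(T/\delta)$ to an explicit choice of $T$. Substituting $u=T^{s}$ rewrites it as $u\ge \tfrac{a}{s}\log(u/\delta^{s})$, which has the standard form $u\ge a'\log(u/\delta')$ with $a'=a/s$, $\delta'=\delta^{s}$; invoking the elementary logarithmic-inversion fact that $u\ge 2a'\log(2a'/\delta')$ implies $u\ge a'\log(u/\delta')$ (the same one-line lemma used throughout the high-probability optimization literature), it is enough to take
\[
T \;=\; \Big(\tfrac{2a}{s}\log\tfrac{2a}{s\,\delta^{s}}\Big)^{1/s}.
\]
Since $s$ is a constant and $\log\tfrac{2a}{s\,\delta^{s}}=\log\tfrac1\varepsilon+\log\tfrac1\delta+\cO(\log C)$, with $\log C$ not depending on $\varepsilon,\delta$ (it is absorbed by the $\cO(\cdot)$ which already retains $C$ polynomially), this is exactly
\[
T \;=\; \cO\!\left(\Big(\frac{\sqrt{\Delta_1(L+\sigma_h)}+\sigma}{\varepsilon}\log\frac{1}{\delta\varepsilon}\Big)^{\frac{2p-1}{p-1}}\right),
\]
and for this (large) $T$ the mild preconditions of Theorem~\ref{thm:main_theorem_hp}, namely $T\ge1$ and $\log(8T/\delta)\ge1$, hold automatically.

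Finally, I would convert iterations into oracle queries: each iteration of Algorithm~\ref{alg:NSGD_SOM_clipped} draws exactly one stochastic gradient $\nabla f(x_t,\xi_t)$ and one stochastic Hessian-vector product $\nabla^2 f(\hat x_t,\hat\xi_t)(x_t-x_{t-1})$ (and $g_0=0$ needs no initial batch), so $T$ iterations cost $\cO(T)$ first/second-order oracle calls, which yields the claimed complexity. The only genuinely delicate step is the logarithmic inversion in the second paragraph — specifically, arguing that the $\log(T/\delta)$ in the rate may be replaced by $\log\tfrac1{\delta\varepsilon}$ without polluting the polynomial prefactor; everything else is bookkeeping. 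It is worth noting that the exponent $\tfrac{2p-1}{p-1}$ diverges as $p\to1^{+}$, which is unavoidable and already mirrored by the lower bound in Theorem~\ref{thm:lower_bounds}.
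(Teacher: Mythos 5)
Your proposal is correct and follows essentially the same route as the paper: the paper obtains the corollary by inverting the high-probability rate $\cO\bigl(\bigl(\sqrt{\Delta_1(L+\sigma_h)}+\sigma\bigr)T^{-\frac{p-1}{2p-1}}\log(T/\delta)\bigr)$ in $T$ (see the discussion at the end of the proof of Theorem~\ref{thm:main_theorem_hp_app}) and counting one stochastic gradient plus one Hessian--vector product per iteration, exactly as you do. The only difference is that you make the logarithmic-inversion step explicit (the substitution $u=T^{s}$ and the standard lemma), which the paper leaves implicit; this is a harmless elaboration, and your handling of the suppressed $\log$-dependence on problem constants is consistent with the paper's complexity-notation convention.
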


According to the results of Theorem~\ref{thm:main_theorem_hp} and Corollary~\ref{cor:high_probability},  we have shown that Algorithm~\ref{alg:NSGD_SOM_clipped} has a near-optimal convergence rate in the dependence on $\varepsilon$. Moreover, the complexity has a logarithmic dependence on $\nicefrac{1}{\delta}$, which is better than a polynomial dependence that can be achieved by directly translating the in-expectation result of Theorem~\ref{thm:nsgdm_hess} to a high probability one using Markov's inequality. Unfortunately, the dependence on $\Delta_1, L, \sigma, \sigma_h$ does not match the  lower bound \eqref{eq:lower_bound_second_order}. 

\paragraph{Comparison with \citep{UnboundedClippedNSGDM2023Liu}.} Recently, and under additional strong assumptions, recently \cite{UnboundedClippedNSGDM2023Liu} analyzed a momentum-based variance-reduced first-order method, and obtained rates similar those in our Corollary~\ref{cor:high_probability}. However, they assume individual smoothness (i.e., $\|\nabla f(x,\xi) - \nabla f(y, \xi)\| \leq \ell \|x - y\|$ for all $x, y\in \R^d$ and all realizations of the random variable $\xi$). However, individual smoothness is a very strong assumption which implies boundedness of the stochastic Hessian, i.e., $\|\nabla^2 f(x, \xi)\| \leq \ell$ a.s., for all $x\in \R^d$. We also note that under individual smoothness,  fast $p$-independent rates can be obtained, e.g., \citet{lei2019stochastic} achieve $\bar \cO(\varepsilon^{-4})$ complexity for SGD under  individual smoothness.

\section{Limitations and Future Work}
While our proposed method with Hessian clipping has many desirable properties, such as near-optimal sample complexity, batch-free and high-probability convergence, under mild statistical assumptions, it also has several limitations. First, it requires tuning three extra parameters compared to, e.g., vanilla SGD \citep{fatkhullin2025can} or Normalized SGD \citep{Hubler2024clip_to_norm}. It would be interesting to investigate if such extensive tuning can be removed for second-order methods. Second, while our sample complexity is near optimal, it does not match our lower bound exactly in all important parameters. Moreover, while we mainly pay attention to the leading stochastic terms in the complexity, our methods do not recover optimal deterministic complexities of second-order methods. Perhaps a different analysis technique for clipping and some algorithmic modifications (like Newton type preconditioning \citep{chayti2024improving}) are required to exactly match the upper and lower bounds. Third, high probability upper bounds are derived for a fixed confidence level $\delta$ rather than uniformly for all $\delta \in (0,1)$ (as e.g., in \citep{liu2023high,Hubler2024clip_to_norm}) and the algorithm's parameters depend on this choice. This is limiting as it does not necessarily imply high probability convergence for a fixed parameter choice. Finally, while we make an important progress in the fundamental understanding of second-order stochastic optimization, such results should always be interpreted with caution. In practice, the overhead of second-order methods (computing Hessian information or even Hessian-vector products) and the challenge of tuning additional hyperparameters (such as clipping thresholds for both gradients and Hessians) can be significant. 

\section*{Acknowledgements}
The research reported in this publication was supported by funding from King Abdullah University of Science and Technology (KAUST): i) KAUST Baseline Research Scheme, ii) CRG Grant ORFS-CRG12-2024-6460, and iii) Center of Excellence for Generative AI, under award number 5940. The work of I.F. is supported by ETH AI Center Doctoral Fellowship. 

\bibliography{main}

\newpage
\appendix
\tableofcontents
\newpage
\section{Additional Related Work}
\paragraph{Gradient clipping and normalization.} Gradient clipping has been applied successfully also in zero-order optimization \citep{kornilov2024accelerated}, bandit and RL literature \citep{bubeck2013bandits,cayci2024provably}, online learning \citep{zhang2022parameter} and differential privacy \citep{abadi2016deep,sha2024clip}. Some other works, which use normalization and gradient clipping under heavy-tailed noise include \citep{NonlinearHP2023Armacki,puchkin2024breaking,liu2024nonconvex}.

\begin{table}[t]
    \centering
    %\scriptsize
    \caption{Summary of sample complexities of stochastic second-order (SOSO) methods for finding an $\varepsilon$-stationary point, in high probability of in expectation, i.e., number of stochastic gradient and Hessian evaluations to find $\bar x$ with $\norm{\nabla F(\bar x)} \leq \varepsilon.$ The column ``$p$'' indicates the range of moments in Assumptions~\ref{ass:p_BCM}, \ref{ass:p_BCM_hess} for which the result holds, that is when $p=2$, the corresponding result holds only under bounded variance. When $p=\infty$ it means that at least for stochastic gradient or stochastic Hessian bounded noise assumption is required, which corresponds to all moments being bounded. We are not aware of any prior works for SOSO for $p<2.$ The column ``\textbf{HP?}'' denotes whether the high probability guarantee with polylogarithmic dependence on the inverse of failure probability $1/\delta$ is available. }
    \label{tab:summary}
 \begin{threeparttable}
\resizebox{\textwidth}{!}{%
\begin{tabular}{cccc}

\bf Algorithm & \bf Sample Complexity &  $p$  & \bf HP?\\ \hline \hline 
\begin{tabular}{c} SCN \\ \citep{tripuraneni2018stochastic} \end{tabular} & $\frac{\Delta\sqrt{L_h}\sigma^2}{\varepsilon^{\nicefrac{7}{2}}} $ & {\color{BrickRed}$\infty$} & \cmark$^{\color{blue}(1)}$ \\ 
\begin{tabular}{c} SGD with HVP-RVR \\ \citep{arjevani2020second} \end{tabular} & $\frac{\Delta\sigma\sigma_h}{\varepsilon^{3}} + \frac{\Delta \sqrt{L_h}\sigma_1}{\varepsilon^{\nicefrac{5}{2}}}+ \frac{\Delta L}{\varepsilon^2}$ & {\color{BrickRed}$2$} & \xmark\\ 
\begin{tabular}{c} SN$^{\color{blue}(2)}$ with  HVP-RVR  \\ \citep{arjevani2020second} \end{tabular} &  $\frac{\Delta\sigma\sigma_h}{\varepsilon^{3}} + \frac{\Delta \sqrt{L_h}\sigma_1}{\varepsilon^{\nicefrac{5}{2}}}+ \frac{\Delta \sigma_h}{\varepsilon^2}$  & {\color{BrickRed}$2$} & \xmark\\
\begin{tabular}{c} N-SGDHess   \\ \citep{BetterSGDUsingSOM_Tran_2021} \end{tabular} &  $\frac{\sigma^3}{\varepsilon^{3}}+ \frac{\Delta\sigma\sigma_h}{\varepsilon^{3}} + \frac{\Delta \sqrt{L_h}\sigma_1}{\varepsilon^{\nicefrac{5}{2}}}+ \frac{\Delta \sigma_h}{\varepsilon^2}$  & {\color{BrickRed}$2$}& \xmark \\
\begin{tabular}{c} SCN with  IT-HB  \\ \citep{chayti2024improving} \end{tabular} &  $\frac{\Delta\sqrt{L_h}}{\varepsilon^{\nicefrac{3}{2}}} + \frac{\Delta L_h^{\nicefrac{1}{4}}\sigma_h^{\nicefrac{1}{2}}}{\varepsilon^{\nicefrac{7}{4}}}+ \frac{\Delta \sqrt{L}\sigma^2}{\varepsilon^{\nicefrac{7}{2}}}$  & {\color{BrickRed}$\infty$}& \xmark\\
\hline\hline
 \cellcolor{linen} \begin{tabular}{c} Lower Bounds \\   Theorem~\ref{thm:lower_bounds} \end{tabular}
& \cellcolor{linen}  $\min\left\{\frac{\Delta \sigma_h}{\varepsilon^2}, \frac{\Delta L \sigma}{\varepsilon^{3}} , \frac{\Delta \sqrt{L_h}\sigma}{\varepsilon^{\nicefrac{5}{2}}} \right\} \left(\frac{\sigma}{\varepsilon}\right)^{\frac{1}{p-1}}$ &\cellcolor{linen} {\color{PineGreen}$(1,2]$} & \cellcolor{linen}\\
\hline
 \cellcolor{linen} \begin{tabular}{c} NSGDMHess \\   Theorem~\ref{thm:nsgdm_hess} \end{tabular}
& \cellcolor{linen} $\frac{\Delta (L +\sigma_h)}{\varepsilon^2} + \left(\frac{\Delta (\sm + \sigma_h)}{\varepsilon^2}  + \frac{\sigma}{\varepsilon}\right)\left(\frac{\sigma}{\varepsilon}\right)^{\frac{1}{p-1}} $  &\cellcolor{linen} {\color{PineGreen}$(1,2]$} & \cellcolor{linen} \xmark \\
 \cellcolor{linen} \begin{tabular}{c} Clip NSGDMHess  \\  Theorem~\ref{thm:main_theorem_hp} \end{tabular} & \cellcolor{linen}$\left(\frac{\sqrt{\Delta_1(L+\sigma_h)}+ \sigma}{\varepsilon}\right)^{2 +\frac{1}{p-1}}$ & \cellcolor{linen} {\color{PineGreen}$(1,2]$} & \cellcolor{linen} \cmark \\ 
\end{tabular}
}
\begin{tablenotes}
        {\scriptsize \item [{\color{blue}(1)}] 
        \cite{tripuraneni2018stochastic} provide analysis under stronger  assumptions, which implies the noise has light tails.  
        \item [{\color{blue}(2)}] SN = Subsampled Newton.
        \item [{\color{blue}(3)}]
        }
    \end{tablenotes}
    \end{threeparttable}
    % \vspace{-6mm}
\end{table}

\section{About Normalized SGD and Different Variants of Momentum}
In this section we describe a general framework of momentum terms. There is a lot of literature about it especially for convex optimization, since \cite{nesterov1} showed that an appropriate use of momentum with GD accelerates convergence. But we want to emphasize that in our work we consider momentum in the context of stochastic non-convex optimization. The general update rules is defined as follows 
\begin{equation*}
    x_{t+1} = x_t - \gamma_t\frac{g_t}{\norm{g_t}},
\end{equation*}
where $g_t$ is a momentum term. Different selections of $g_t$ leads to different types of methods. Let us consider several of them.

\textbf{NSGD} is the well-known method. if $g_t = \nabla f(x_t,\xi_t)$, we immediately get NSGD. As it was shown that NSGD converges only to the neighborhood, which radius is equal to $\sigma$ \citep{yang2023two_sides,Hubler2024clip_to_norm}. One of the possible way to fix it is mini-batching, allowing to reduce the variance. But if we want to use only one sample per iteration, this strategy cannot be applied. 

\textbf{NSGDM} is the solution of the above problem and was proposed and analyzed by \cite{MomentumImprovesNSGD_Cutkosky_2020}. The momentum term is defined as 
\begin{equation*}
    g_{t} = (1-\alpha)g_{t-1} + \alpha \nabla f(x_t, \xi_t).
\end{equation*}
In their work they showed that NSGDM converges to $\varepsilon$-stationary point and  has sample complexity, which is equal to $\cO(\varepsilon^{-4})$, which is optimal according to \cite{arjevani2023lower}. Later, \citet{Hubler2024clip_to_norm} extended this result to the case of bounded $p$-th moment deriving $\cO(\varepsilon^{-\frac{3p-2}{p-1}})$ sample complexity.

\textbf{NSGDM with clipping} was proposed directly for the general set up with bounded $p$-th moment of stochastic gradient. The momentum term is modified as follows 
\begin{equation*}
    g_{t} = (1-\alpha)g_{t-1} + \alpha \texttt{clip}\left(\nabla f(x_t, \xi_t), \lambda\right).
\end{equation*}
\cite{cutkosky2021high} provided high-probability convergence guarantees, and the sample complexity of this method is $\cO(\varepsilon^{-\frac{3p-2}{p-1}})$, which is optimal and matches with lower bounds from the paper of \cite{zhang2020adaptive}. It is worth to mention that the clipping operator is not necessary and normalization of momentum term is sufficient to guarantee convergence under Assumption~\ref{ass:p_BCM} \citep{Hubler2024clip_to_norm,liu2024nonconvex}. 

\textbf{NIGT} is NSGD with momentum and implicit gradient transportation proposed by \cite{MomentumImprovesNSGD_Cutkosky_2020}. They modified momentum term with replacing  the point, at which stochastic gradient is computed:
\begin{equation*}
    g_{t} = (1-\alpha)g_{t-1} + \alpha \nabla f(y_t, \xi_t),\quad \text{where}~y_t = x_t +\frac{1-\alpha}{\alpha}x_{t-1}.
\end{equation*}
Thanks to this modification, assuming that the objective function has Lipschitz continuous Hessian \cite{MomentumImprovesNSGD_Cutkosky_2020} proved NIGT has sample complexity $\cO\left(\varepsilon^{-\nicefrac{7}{2}}\right)$, which is better than NSGDM. 

\textbf{NIGT with clipping} was motivated for high-probability analysis under bonded $p$-th moment of stochastic gradient by \cite{cutkosky2021high}. The main difference from NIGT is only a clipping operator, which is needed from theoretical perspective. According to \cite{cutkosky2021high}, NIGT with clipping has sample complexity $\cO\left(\varepsilon^{-\frac{5p-3}{2p-2}}\right)$, which is optimal according to Theorem~\ref{thm:lower_bounds}.

\textbf{NSGD with MVR} is other version of NSGD with momentum variance reduction (MVR). STORM/MVR was proposed by \cite{cutkosky-orabona19} to improve the sample complexity compared to SGD.

The above considered methods use only first-order information. What if we try to exploit second-order information to improve sample complexity? 

\textbf{NSGD-Hess} is Normalized SGD with Hessian-corrected momentum proposed by \cite{BetterSGDUsingSOM_Tran_2021}. They introduced stochastic Hessian-vector product into momentum term:
\begin{equation*}
    g_{t} = (1-\alpha)\left(g_{t-1}+\nabla^2 f(x_t,\xi_t)\cdot (x_t-x_{t-1}) \right)+ \alpha \nabla f(y_t, \xi_t),
\end{equation*}
\textbf{Refined NSGD-Hess.} Our refined momentum modifies the point where the stochastic Hessian is evaluated and uses another fresh sample $\hat \xi_t \sim \cD$ for the Hessian-vector product computation. 
\begin{eqnarray*}
    q_t &\sim& \cU\left([0,1]\right), \\
    \hat{x}_t &=& q_t x_t + (1-q_t)x_{t-1}, \\
    g_{t} &=& (1-\alpha)\left(g_{t-1}+\nabla^2 f(\hat x_t,\hat \xi_t)\cdot (x_t-x_{t-1}) \right)+ \alpha \nabla f(x_t, \xi_t) . 
\end{eqnarray*}

\newpage 
\section{Technical Lemmas}
We list the following technical lemmas, which are useful for our analysis in the subsequent sections.
\begin{lemma}[Lemma 10 from \cite{Hubler2024clip_to_norm}]
\label{lem:von_Bahr_inequality}
    Let $p \in [1;2]$, and $X_1,\dots, X_n \in \R^d$  be a martingale difference sequence, i.e. $\EE\left[X_j| X_{j-1},\dots, X_1\right] = 0$ a.s. for all $j=1,\dots, n$ satisfying 
    \begin{equation*}
        \EE\left[\norm{X_j}^p\right] < \infty \quad \quad \text{for all } i=1,\dots, n.
    \end{equation*}
    Define $S_n \eqdef \sum^n_{j=1}X_j$, then 
    \begin{equation*}
        \EE\left[\norm{S_n}^p\right] \leq 2\sum^n_{i=j}\EE\left[\norm{X_j}^p\right].
    \end{equation*}
\end{lemma}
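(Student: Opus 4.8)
The plan is to reduce the probabilistic statement to a single deterministic inequality and then run a one-line induction. Concretely, I would first isolate the elementary bound: for $1 \le p \le 2$ and any $u, v \in \R^d$,
\begin{equation}\label{eq:elem}
\norm{u+v}^p \le \norm{u}^p + p\norm{u}^{p-2}\la u, v\ra + 2\norm{v}^p,
\end{equation}
with the convention that the middle term is read as $0$ when $u = 0$ (it is precisely $\la \nabla(\norm{\cdot}^p)(u), v\ra$, the first-order term of $\norm{\cdot}^p$ at $u$, since $\nabla\norm{u}^p = p\norm{u}^{p-2}u$). Everything else in the lemma is soft and follows from \eqref{eq:elem}.

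Granting \eqref{eq:elem}, I would prove the lemma by induction on $n$, conditioning on $\mathcal{F}_{n-1} = \sigma(X_1,\dots,X_{n-1})$. Writing $S_n = S_{n-1} + X_n$ and applying \eqref{eq:elem} with $u = S_{n-1}$ (which is $\mathcal{F}_{n-1}$-measurable) and $v = X_n$, then taking $\E[\,\cdot\mid\mathcal{F}_{n-1}]$, the linear term becomes $p\norm{S_{n-1}}^{p-2}\la S_{n-1}, \E[X_n\mid\mathcal{F}_{n-1}]\ra = 0$ by the martingale difference property. This yields
\begin{equation*}
\E[\norm{S_n}^p \mid \mathcal{F}_{n-1}] \le \norm{S_{n-1}}^p + 2\,\E[\norm{X_n}^p \mid \mathcal{F}_{n-1}],
\end{equation*}
and taking total expectations together with the inductive hypothesis $\E\norm{S_{n-1}}^p \le 2\sum_{j=1}^{n-1}\E\norm{X_j}^p$ closes the induction; the base case $n=1$ is trivial since $2 \ge 1$.

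The substance is thus \eqref{eq:elem}, which I would establish by symmetry reductions followed by a one-variable optimization. The case $u = 0$ is immediate. For $u \ne 0$, both sides are positively $p$-homogeneous under $(u,v) \mapsto (tu,tv)$ and invariant under a simultaneous rotation, so I may assume $u = e$ is a fixed unit vector; decomposing $v$ into its component $s = \la e, v\ra$ along $e$ and a transverse part of norm $r \ge 0$, and writing $\rho = \norm{v} = \sqrt{s^2 + r^2}$, inequality \eqref{eq:elem} becomes
\begin{equation*}
\bigl((1+s)^2 + r^2\bigr)^{p/2} \le 1 + ps + 2\rho^p .
\end{equation*}
For fixed $\rho$ I would maximize the left-minus-right difference over $s \in [-\rho, \rho]$: its derivative is $p\bigl[(1+2s+\rho^2)^{p/2-1} - 1\bigr]$, which (using $p/2 - 1 \le 0$) vanishes only at $s^\star = -\rho^2/2$ and changes sign there from $+$ to $-$. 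Hence the maximum is attained at $s^\star$ when $\rho \le 2$ and at the endpoint $s = -\rho$ when $\rho > 2$. In the first regime the left side equals $1$ and the claim reduces to $\tfrac{p}{2}\rho^{2-p} \le 2$, which holds since $\rho^{2-p} \le 2^{2-p} \le 2$ and $p \le 2$; in the second regime it reduces to $(\rho-1)^p \le 2\rho^p - p\rho + 1$, which follows from $(\rho-1)^p \le \rho^p$ and the elementary monotone estimate $\rho^p + 1 - p\rho \ge 0$ for $\rho \ge 2$.

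The main obstacle is pinning down \eqref{eq:elem} with the \emph{sharp} constant $2$: a crude second-order Taylor expansion of $\norm{\cdot}^p$ is hazardous because $\norm{\cdot}^p$ fails to be twice differentiable at the origin for $p < 2$, so the curvature remainder can blow up along segments passing near $0$. The exact interior-versus-boundary case split above is precisely what sidesteps this and keeps the constant at $2$ uniformly in $p \in [1,2]$; the remaining probabilistic part is then a routine martingale induction.
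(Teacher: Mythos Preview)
The paper does not supply its own proof of this lemma: it is stated as a technical tool and attributed verbatim to \cite{Hubler2024clip_to_norm}, so there is nothing in the present paper to compare your argument against.

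Your proof is correct. The reduction to the deterministic inequality \eqref{eq:elem} followed by a one-step martingale induction is the standard route to such von Bahr--Esseen type bounds, and your verification of \eqref{eq:elem} via homogeneity, rotation to $u=e$, and the concave one-variable optimization in $s$ is clean. A couple of minor remarks: in Case~1 the maximum value is $g(s^\star)=1+\tfrac{p}{2}\rho^2$, so the reduced claim is $\tfrac{p}{2}\rho^2 \le 2\rho^p$, i.e.\ $\tfrac{p}{2}\rho^{2-p}\le 2$, exactly as you wrote; in Case~2 the concavity of $g$ together with $s^\star<-\rho$ indeed forces the maximum to the left endpoint $s=-\rho$; and the edge cases $p=1$ and $p=2$ (where the exponent $p/2-1$ is $-1/2$ or $0$) are covered by your argument without modification. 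The only cosmetic point is that the convention ``middle term equals $0$ when $u=0$'' should also be invoked inside the induction whenever $S_{n-1}=0$, but this is harmless since the conditional-expectation step then reads $\E[\norm{X_n}^p\mid\mathcal F_{n-1}]\le 2\,\E[\norm{X_n}^p\mid\mathcal F_{n-1}]$.
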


\begin{lemma}[Lemma 10 from \cite{cutkosky2021high}]
\label{lem:inequlity_from_cutkosky}
    Let  $X_1,\dots, X_n \in \R^d$  be a sequence of random vectors. Define the sequence of real numbers $w_1,\dots, w_n$ recursively 
    \begin{enumerate}
        \item $w_0$ = 0
        \item If $\sum^{j-1}_{i=1}X_i \neq 0$, then we set: 
        \begin{equation*}
            w_j = \sign\left(\sum^{j-1}_{i=1}w_i\right)\frac{\left\la \sum^{j-1}_{i=1}X_i, X_j\right\ra}{\norm{\sum^{j-1}_{i=1}X_i}}.
        \end{equation*}
        \item If $\sum^{j-1}_{i=1}X_i = 0$, set $w_j = 0$.
    \end{enumerate}
    Then  $|w_j| \leq \norm{X_j}$ for all $j=1,\dots, n$, and 
    \begin{equation}
        \norm{S_n}\leq \left|\sum^n_{j=1} w_j\right| + \sqrt{\max_{j \in [n]}\norm{X_j}^2 + \sum^n_{j=1}\norm{X_j}^2}.
    \end{equation}
\end{lemma}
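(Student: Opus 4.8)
The bound $|w_j| \le \norm{X_j}$ is immediate: for $j$ with $S_{j-1} := \sum_{i<j}X_i \neq 0$ the sign prefactor has modulus one, so Cauchy--Schwarz gives $|w_j| = |\langle S_{j-1}, X_j\rangle|/\norm{S_{j-1}} \le \norm{X_j}$, while $w_j = 0$ when $S_{j-1} = 0$. This is exactly what makes $\norm{X_j}^2 - w_j^2 \ge 0$, a fact used repeatedly below. For the norm bound I would prove, by induction on $j$, the stronger invariant $\norm{S_j} \le |W_j| + \beta_j$, where $W_j := \sum_{i\le j} w_i$ and $\beta_j := \bigl(\max_{i\le j}\norm{X_i}^2 + \sum_{i\le j}\norm{X_i}^2\bigr)^{1/2}$; the base case $\norm{S_0} = |W_0| = \beta_0 = 0$ is trivial, and taking $j=n$ is the claim.

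The induction runs on two matched identities written in terms of the signed increment $\delta_j := s_j w_j$, where $s_j := \sign(W_{j-1})$ (convention $\sign(0)=1$, so $s_j^2 = 1$). The definition of $w_j$ then reads $\langle S_{j-1}, X_j\rangle = \delta_j\norm{S_{j-1}}$, which after expanding $\norm{S_j}^2 = \norm{S_{j-1} + X_j}^2$ and completing the square gives
\[ \norm{S_j}^2 = \bigl(\norm{S_{j-1}} + \delta_j\bigr)^2 + \bigl(\norm{X_j}^2 - \delta_j^2\bigr), \]
with nonnegative remainder (since $\delta_j^2 = w_j^2 \le \norm{X_j}^2$). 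On the weights, $s_j W_{j-1} = |W_{j-1}|$ yields $|W_j| = \bigl|\,|W_{j-1}| + \delta_j\,\bigr|$, and hence the one inequality that drives everything: $|W_j| \ge |W_{j-1}| + \delta_j$.

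The induction step then splits on the sign of $\norm{S_{j-1}} + \delta_j$. If it is nonnegative, I would chain the hypothesis with the weight inequality, $\norm{S_{j-1}} + \delta_j \le |W_{j-1}| + \beta_{j-1} + \delta_j \le |W_j| + \beta_{j-1}$, square (legitimate since both sides are nonnegative), and insert into the identity to get $\norm{S_j}^2 \le (|W_j| + \beta_{j-1})^2 + \norm{X_j}^2 \le (|W_j| + \beta_j)^2$, where the last step is just $\beta_j^2 \ge \beta_{j-1}^2 + \norm{X_j}^2$ (both the running max and the running sum only grow). If instead $\norm{S_{j-1}} + \delta_j < 0$, then $\delta_j < 0$ with $|\delta_j| > \norm{S_{j-1}}$, so $(\norm{S_{j-1}} + \delta_j)^2 \le \delta_j^2$ and the identity collapses to $\norm{S_j}^2 \le \norm{X_j}^2$; since $\norm{X_j} \le \beta_j \le |W_j| + \beta_j$, the invariant is preserved in this case as well.

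The conceptual crux is this second (``overshoot'') case: there $|W_j|$ can decrease even though $\norm{S_j}$ does not, so the slack $\norm{S_j} - |W_j|$ is not monotone and must be carried by the growing budget $\beta_j$. This is precisely why $\beta_j$ must contain the $\max_{i\le j}\norm{X_i}^2$ term in addition to $\sum_{i\le j}\norm{X_i}^2$: it is needed to absorb a single large increment landing against a small partial sum, which the accumulated sum alone cannot yet cover. Identifying this invariant together with the inequality $|W_j| \ge |W_{j-1}| + \delta_j$ is the only real work; the remaining manipulations are routine completions of squares.
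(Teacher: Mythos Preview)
Your proof is correct. The paper does not include its own proof of this lemma; it is quoted from \cite{cutkosky2021high} as a technical tool without reproof, so there is no in-paper argument to compare against. Your inductive invariant $\norm{S_j}\le |W_j|+\beta_j$, the identity $\norm{S_j}^2=(\norm{S_{j-1}}+\delta_j)^2+(\norm{X_j}^2-\delta_j^2)$, and the key one-line weight inequality $|W_j|\ge|W_{j-1}|+\delta_j$ together with the two-case split are exactly the standard route and all steps check out (including the necessary convention $\sign(0)=1$, without which the lemma is actually false).

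One small expository point: your closing remark that the $\max_{i\le j}\norm{X_i}^2$ term is ``precisely'' what absorbs the overshoot in Case~2 is not accurate. In that case you only use $\norm{X_j}\le\beta_j$, which already follows from $\sum_{i\le j}\norm{X_i}^2\ge\norm{X_j}^2$; and in Case~1 you only need $\beta_j^2\ge\beta_{j-1}^2+\norm{X_j}^2$ and $\beta_j\ge\beta_{j-1}$, both of which hold with $\beta_j^2=\sum_{i\le j}\norm{X_i}^2$ alone. So your argument in fact proves the slightly sharper bound without the max term; the stated lemma is just the weaker consequence. This does not affect correctness.
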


\begin{lemma}[Bernstein inequality]
\label{lem:Bernstain_inequality}
    Let  $X_1,\dots, X_n \in \R^d$  be a martingale difference sequence, i.e. $\EE\left[X_j| X_{j-1},\dots, X_1\right] = 0$ a.s. for all $j=1,\dots, n$. Assume that conditional variances $\sigma^2_{j} \eqdef \EE\left[X^2_j| X_{j-1},\dots, X_1\right]$ exist and are bounded, and assume that there exists deterministic constant $c > 0$ such that $|X_j| \leq c$ almost surely for all  $j=1,\dots, n$. Define $S_n \eqdef \sum^n_{i=1}X_j$, then for all $b > 0, G> 0$ 
    \begin{equation}
        \PP\left\{\left|S_n\right| > b \text{ and } \sum^n_{j=1}\sigma^2_i \leq G\right\} \leq 2\exp\left( - \frac{b^2}{2G + \nicefrac{2cb}{3}}\right),
    \end{equation}
\end{lemma}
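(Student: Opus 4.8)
The plan is to use the standard Cramér–Chernoff exponential-supermartingale method, taking care to \emph{freeze} the random cumulative conditional variance on the event $\{\sum_j \sigma_j^2 \le G\}$. (Since the statement involves $|X_j|$ and $X_j^2$, I treat the $X_j$ as real-valued.) Write $\cF_{t} \eqdef \sigma(X_1,\dots,X_t)$ for the natural filtration. First I would establish a one-sided conditional moment generating function bound: for each $j$ and any $\theta \in (0, 3/c)$, Taylor-expanding $e^{\theta X_j}$, using $\E[X_j \mid \cF_{j-1}] = 0$ together with $|\E[X_j^k \mid \cF_{j-1}]| \le c^{k-2}\sigma_j^2$ for $k \ge 2$, and the elementary fact $k! \ge 2\cdot 3^{k-2}$, yields
\[
\E\!\left[e^{\theta X_j}\,\middle|\,\cF_{j-1}\right] \;\le\; 1 + \frac{\theta^2 \sigma_j^2/2}{1 - \theta c/3} \;\le\; \exp\!\left(\frac{\theta^2 \sigma_j^2/2}{1 - \theta c/3}\right).
\]

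Next I would build the exponential supermartingale. Defining
\[
M_t \eqdef \exp\!\left(\theta S_t - \frac{\theta^2/2}{1-\theta c/3}\sum_{j=1}^t \sigma_j^2\right), \qquad M_0 \eqdef 1,
\]
the conditional bound above gives $\E[M_t \mid \cF_{t-1}] \le M_{t-1}$, so $(M_t)$ is a nonnegative supermartingale with $\E[M_n] \le M_0 = 1$. To connect this with the target, note that on the event $\mathcal{A} \eqdef \{\sum_{j=1}^n \sigma_j^2 \le G\}$ we have $M_n \ge \exp(\theta S_n - \tfrac{\theta^2 G/2}{1-\theta c/3})$, so $\{S_n > b\} \cap \mathcal{A}$ forces $M_n > \exp(\theta b - \tfrac{\theta^2 G/2}{1-\theta c/3})$. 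Markov's inequality together with $\E[M_n]\le 1$ then gives
\[
\PP\{S_n > b,\ \textstyle\sum_j \sigma_j^2 \le G\} \;\le\; \exp\!\left(-\theta b + \frac{\theta^2 G/2}{1-\theta c/3}\right).
\]

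Finally I would optimize over $\theta$. The clean choice $\theta = b/(G + cb/3)$, which satisfies $\theta c < 3$ for every $G,b>0$ (since this reduces to $0 < 3G$), collapses $\tfrac{\theta^2 G/2}{1-\theta c/3}$ to exactly $\theta b/2$, so the exponent becomes $-\theta b/2 = -b^2/(2G + 2cb/3)$. Running the identical argument for the sequence $(-X_j)$, which is again a martingale difference sequence with the same conditional variances and the same bound $c$, controls $\{-S_n > b\} \cap \mathcal{A}$, and a union bound over the two tails supplies the factor $2$, yielding the claimed inequality.

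The main obstacle — really the only nonroutine point — is the variance freezing: because $\sum_j \sigma_j^2$ is random, one cannot simply substitute a deterministic variance into a classical Bernstein estimate. The supermartingale formulation resolves this cleanly, since $\E[M_n] \le 1$ holds \emph{unconditionally} while the $-\sum_j \sigma_j^2$ term inside the exponent may be replaced by its worst case $-G$ precisely on the event $\mathcal{A}$; no stopping-time construction is required. Everything else — the moment expansion for the MGF bound and the single-variable minimization over $\theta$ — is standard.
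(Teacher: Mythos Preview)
Your proof is correct. The paper does not actually prove this lemma: it is listed among the ``Technical Lemmas'' as a standard Bernstein-type martingale inequality and is used as a black box in the high-probability analysis. Your argument---the Cram\'er--Chernoff exponential supermartingale with the variance-freezing observation on the event $\{\sum_j\sigma_j^2\le G\}$, followed by the choice $\theta=b/(G+cb/3)$---is exactly the classical derivation (essentially Freedman's inequality specialized to bounded increments), and all the calculus checks out, including the verification that $\theta c<3$ and that the exponent collapses to $-b^2/(2G+2cb/3)$.
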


\begin{lemma}[ Lemma 5.1 from \cite{sadiev2023high}]
\label{lem:bound_variance_magnitude}
    Let $\lambda > 0$ and $X \in \R^d$ be a random vector and $\widetilde{X} \eqdef \texttt{clip}\left(X, \lambda\right)$. Then, $\norm{\widetilde{X} - \Exp{\widetilde{X}}} \leq 2\lambda$. Moreover, if for some $\sigma \geq 0$ and $p \in (1;2]$ we have $\Exp{X} = x \in \R^d$, $\Exp{\norm{X-x}^p}\leq \sigma^p$, and $\norm{x} \leq \nicefrac{\lambda}{2}$, then 
    \begin{equation*}
        \norm{\Exp{\widetilde{X}} -x} \leq \frac{2^p\sigma^p}{\lambda^{p-1}},\quad \Exp{\norm{\widetilde{X} - x}^2}\leq 18\lambda^{2-p}\sigma^p,\quad \Exp{\norm{\widetilde{X} - \Exp{\widetilde{X}}}^2}\leq 18\lambda^{2-p}\sigma^p.
    \end{equation*}
\end{lemma}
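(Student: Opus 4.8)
The plan is to prove the four inequalities in sequence, relying only on two elementary facts: that $\norm{\widetilde{X}} \leq \lambda$ holds pointwise (immediate from the definition of \texttt{clip}), and that clipping alters $X$ solely on the event $\cE \eqdef \{\norm{X} > \lambda\}$. The first, deterministic bound is the easy one: since $\norm{\widetilde{X}} \leq \lambda$ almost surely, Jensen's inequality gives $\norm{\Exp{\widetilde{X}}} \leq \lambda$, and a triangle inequality then yields $\norm{\widetilde{X} - \Exp{\widetilde{X}}} \leq \norm{\widetilde{X}} + \norm{\Exp{\widetilde{X}}} \leq 2\lambda$, with no use of the moment assumption.

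The key step -- and where I expect the real work to lie -- is a single observation that feeds all three remaining bounds: because $\norm{x} \leq \nicefrac{\lambda}{2}$, on the clipping event $\cE$ the centered deviation is bounded below, $\norm{X - x} \geq \norm{X} - \norm{x} > \lambda - \nicefrac{\lambda}{2} = \nicefrac{\lambda}{2}$. This lower bound is exactly what lets me exchange the given $p$-th moment control for bounds involving other powers of $\lambda$. For the bias term I would first note $\widetilde{X} = X$ off $\cE$, so $\norm{\Exp{\widetilde{X}} - x} = \norm{\Exp{(\widetilde{X} - X)\mathbf{1}_\cE}} \leq \Exp{\norm{\widetilde{X} - X}\mathbf{1}_\cE}$; on $\cE$ one has $\norm{\widetilde{X} - X} = \norm{X} - \lambda \leq \norm{X - x}$, and since $\nicefrac{2\norm{X-x}}{\lambda} > 1$ there, multiplying by $(\nicefrac{2\norm{X-x}}{\lambda})^{p-1} \geq 1$ gives $\norm{X - x} \leq 2^{p-1}\norm{X-x}^p/\lambda^{p-1}$. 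Taking expectations produces $\norm{\Exp{\widetilde{X}} - x} \leq 2^{p-1}\sigma^p/\lambda^{p-1} \leq 2^p\sigma^p/\lambda^{p-1}$.

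For the second-moment bound I would split the expectation across $\cE$ and its complement. Off $\cE$ we have $\widetilde{X} = X$ and $\norm{X - x} \leq \nicefrac{3\lambda}{2}$, so I write $\norm{X-x}^2 = \norm{X-x}^{2-p}\norm{X-x}^p \leq (\nicefrac{3\lambda}{2})^{2-p}\norm{X-x}^p$ (legitimate since $2-p \geq 0$) and integrate to obtain a contribution of order $(\nicefrac{3}{2})^{2-p}\lambda^{2-p}\sigma^p$. On $\cE$ we have $\norm{\widetilde{X} - x} \leq \norm{\widetilde{X}} + \norm{x} \leq \nicefrac{3\lambda}{2}$, so this piece is at most $\tfrac{9}{4}\lambda^2\,\PP(\cE)$, and I would control $\PP(\cE) \leq \PP(\norm{X-x} > \nicefrac{\lambda}{2}) \leq 2^p\sigma^p/\lambda^p$ by Markov's inequality applied to the $p$-th moment, turning the piece into $\tfrac{9}{4}2^p\lambda^{2-p}\sigma^p$. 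Summing and bounding the $p$-dependent constant uniformly over $p \in (1,2]$ (it stays below $10$, comfortably under the stated $18$) finishes this inequality. The final bound then comes for free, since the mean minimizes mean-squared deviation: $\Exp{\norm{\widetilde{X} - \Exp{\widetilde{X}}}^2} \leq \Exp{\norm{\widetilde{X} - x}^2} \leq 18\lambda^{2-p}\sigma^p$.
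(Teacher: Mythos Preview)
Your proof is correct. The paper does not supply its own proof of this lemma---it is quoted verbatim as Lemma~5.1 from \cite{sadiev2023high} and used as a black box---so there is nothing to compare against here. One tiny imprecision: your constant $(3/2)^{2-p} + (9/4)\cdot 2^p$ equals $10$ at $p=2$ rather than staying strictly below $10$, but this is of course still well under the claimed $18$.
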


\newpage
\section{Missing Proofs for Section~\ref{sec:lower_bounds}}
\label{app:missing_proofs_lower_bounds}
In this section we provide lower bounds for the certain class of functions and the oracle class. Our prove is inspired by the paper of \cite{arjevani2020second}.

\textbf{Auxiliary Lemmas.} Now we state two helpful lemmas to prove lower bound. The first lemma is about the number of iteration to reveal all coordinates, when zero-respecting algorithms use the information received from the oracle forming \textit{probability-$\rho$ zero-chain}. 
\begin{definition}
    A collection of derivative estimators ${\nabla^1 f}(x,\xi)$, ${\nabla^2 f}(x,\xi)$, $\dots$, ${\nabla^q f}(x,\xi)$ for a function $F$ forms a probability-$\rho$ zero-chain if 
    \begin{equation*}
        \PP\left\{\exists~ x ~|~ \text{prog}\left({\nabla^1 f}(x,\xi), \dots, {\nabla^q f}(x,\xi)\right) = \text{prog}_{\frac{1}{4}}(x) +1 \right\} \leq \rho 
    \end{equation*}
    and 
    \begin{equation*}
        \PP\left\{\exists~ x ~|~ \text{prog}\left({\nabla^1 f}(x,\xi), \dots, {\nabla^q f}(x,\xi)\right) = \text{prog}_{\frac{1}{4}}(x) + i \right\} = 0,~ i > 1.
    \end{equation*}
\end{definition}
The second lemma is about the main properties of the \textit{worst} function from  the class $\cF(\Delta, L_{1:q})$. 

\begin{lemma}[\citep{arjevani2020second}]
\label{lem:lower_bound_1}
    Let ${\nabla^1} f(x,\xi), \dots, {\nabla^q} f(x,\xi)$ be a collection of probability-$\rho$ zero-chain derivative estimators for $F: \R^{T}\rightarrow \R$, and let $\text{O}^q_F$ be an oracle with $\text{O}^p_f = \left({\nabla^r f}(x,\xi)\right)_{r \in [q]}$. Let $\{x^{(t)}_{A[\text{O}_F^q]}\}$ be a sequence of queries produced by a zero-respecting algorithm $A$ interacting with $\text{O}^q_F$. Then, with probability at least $1-\delta$
    \begin{equation*}
        \text{prog}\left(x^{(t)}_{A\left[O^q_F\right]}\right) < T, \qquad \text{ for all } \quad t \leq \frac{T-\log\frac{1}{\delta}}{2\rho} .
    \end{equation*}
\end{lemma}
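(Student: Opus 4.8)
The plan is to prove Lemma~\ref{lem:lower_bound_1} by a union-bound argument over the sequence of queries, tracking how the progress measure $\text{prog}(\cdot)$ can increase. First I would recall the definition of the progress function: for $x \in \R^T$, $\text{prog}(x)$ is the largest index $i$ such that $x_i \neq 0$ (with $\text{prog}(x)=0$ if $x=0$), and $\text{prog}_{1/4}(x)$ is the largest index $i$ with $|x_i| \geq 1/4$. For a tuple of derivative estimators, $\text{prog}(\cdot)$ denotes the highest coordinate appearing in the union of their supports. The key structural fact, inherited from the zero-chain construction of \citet{arjevani2020second}, is that the worst-case function $F$ is built so that its derivatives are ``robust zero-chains'': at a query point $x$ with $\text{prog}_{1/4}(x) = i$, the exact derivatives $\nabla^r F(x)$ have support contained in $\{1,\dots,i+1\}$, and coordinate $i+1$ is activated only through the noisy oracle, which does so with probability at most $\rho$ per query (this is exactly the probability-$\rho$ zero-chain property).

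The main steps would be as follows. Let $\{x^{(t)}\}$ be the queries of the zero-respecting algorithm $A$. By the zero-respecting property, $\text{supp}\{x^{(t)}\} \subseteq \bigcup_{s<t} \text{supp}\{\text{O}^q_F(x^{(s)},\xi^{(s)})\}$, so a new coordinate can enter an iterate only if it first appeared in an oracle response. Define the ``unlocking'' event at step $s$ as the event that $\text{prog}(\text{O}^q_F(x^{(s)},\xi^{(s)})) = \text{prog}_{1/4}(x^{(s)}) + 1$, i.e.\ the oracle at step $s$ reveals one coordinate beyond what the current iterate has ``firmly'' activated; by the probability-$\rho$ zero-chain definition this has probability at most $\rho$ conditionally, and by the second displayed condition it can never jump by more than one. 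Then $\text{prog}(x^{(t)})$ is bounded by the number of unlocking events among steps $1,\dots,t-1$ (plus possibly an $O(1)$ offset accounting for how $\text{prog}$ relates to $\text{prog}_{1/4}$, which the construction controls by a constant factor). The number of unlocking events in $t$ steps is stochastically dominated by a sum of $t$ Bernoulli($\rho$) variables, so a Chernoff/Bernstein bound gives that with probability at least $1-\delta$ the count is at most $2\rho t$ once $t \geq \log(1/\delta)/\rho$, or more precisely, rearranging, $\text{prog}(x^{(t)}) < T$ holds as long as $t \leq (T - \log(1/\delta))/(2\rho)$. I would make this precise by invoking a standard concentration inequality for sums of indicator variables adapted to the filtration generated by $(\xi^{(1)},\dots,\xi^{(t)})$ and the algorithm's internal randomness.

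The step I expect to be the main obstacle is the careful bookkeeping that relates $\text{prog}(x^{(t)})$ to the count of unlocking events while respecting the distinction between $\text{prog}$ and $\text{prog}_{1/4}$: the algorithm may partially activate a coordinate (pushing $|x_i|$ above $0$ but not above $1/4$) without an unlocking event, and one must verify that such partial activations cannot cascade, i.e.\ that a coordinate staying below the $1/4$ threshold still cannot expose coordinate $i+2$ through the exact derivatives. This is precisely the role of the robustness in the ``robust zero-chain'' property of the $\cF(\Delta,L_{1:q})$ construction, and I would lean on the corresponding lemma from \citet{arjevani2020second} to assert that the exact derivative $\nabla^r F(x)$ has support within $\{1,\dots,\text{prog}_{1/4}(x)+1\}$ for all $r \in [q]$, so that only the stochastic noise can drive further progress. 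Once that invariant is in hand, the martingale/Chernoff argument is routine, and the threshold $t \leq (T - \log(1/\delta))/(2\rho)$ falls out by choosing the Chernoff slack so that $\rho t + \sqrt{\rho t \log(1/\delta)} + \log(1/\delta) < T$ is implied, which holds comfortably in the stated regime.
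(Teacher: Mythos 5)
The paper does not actually prove this lemma: it is imported verbatim from \citet{arjevani2020second} (and ultimately from the analogous result in \citet{arjevani2023lower}), so there is no in-paper proof to compare against. Judged on its own, your sketch reconstructs the standard argument correctly in structure: the zero-respecting property reduces progress of the iterates to progress of the oracle responses; the probability-$\rho$ zero-chain definition guarantees that the response at any query advances the revealed support by at most one coordinate, and does so with conditional probability at most $\rho$ uniformly over the query point (the ``$\exists\,x$'' inside the definition is exactly what makes this uniform, so adaptivity of $x^{(s)}$ to past noise is harmless); hence the number of unlocked coordinates after $t$ queries is stochastically dominated by a sum of $t$ Bernoulli$(\rho)$ indicators with respect to the natural filtration, and a concentration bound finishes. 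The obstacle you flag -- relating $\text{prog}$ to $\text{prog}_{1/4}$ and ruling out cascades from sub-threshold activations -- is real but is already absorbed into the hypothesis: the robust zero-chain property of the construction (fourth bullet of Lemma~\ref{lem:lower_bound_2}) is what makes the estimators a probability-$\rho$ zero chain in the first place, and the lemma takes that as given. The one quantitative caveat is your final step: the clean threshold $t \le (T-\log\frac{1}{\delta})/(2\rho)$ falls out of a direct moment-generating-function computation, $\PP\{\sum_{s<t} Z_s \ge T\} \le e^{-T}\,\EE[e^{\sum_{s<t} Z_s}] \le e^{-T}(1+(e-1)\rho)^{t} \le e^{2\rho t - T}$, which is $\le \delta$ exactly when $2\rho t \le T - \log\frac{1}{\delta}$; the additive Bernstein form $\rho t + \sqrt{\rho t \log\frac{1}{\delta}} + \log\frac{1}{\delta} < T$ you propose instead only recovers the stated threshold up to constants and under an extra condition of the form $\log\frac{1}{\delta} \lesssim T$, so you should swap it for the MGF bound to match the lemma as stated.
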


\begin{lemma}[\cite{carmon2020lower}]
\label{lem:lower_bound_2}
Let $h: \R^T\rightarrow \R$ be the following function:
\begin{equation*}
    h(x) = -\Psi(1)\Phi(x_1) + \sum^T_{i=2}\left(\Psi(-x_{i-1})\Phi(-x_i) - \Psi(x_{i-1})\Phi(x_i)\right),
\end{equation*}
where 
\begin{equation*}
    \Psi(x) = \begin{cases}
        0,& x \leq \frac{1}{2};\\
        \exp\left(1-\frac{1}{(2x-1)^2}\right),& x > \frac{1}{2},
    \end{cases}
    \quad\text{and}\quad \Phi(x) = \sqrt{e}\int\limits^x_{-\infty} e^{-\frac{1}{2}t^2} dt. 
\end{equation*}
Then the function $h$ satisfies the following properties: 
    \begin{itemize}
        \item $h(0) - \inf\limits_x h(x) \leq \Delta_0 T$, where $\Delta_0 = 12$. 
        \item For $q \geq 1$, the $q$-th order derivatives of $h$ are $\ell_q$-Lipschitz continuous, where $\ell_q \leq \exp\left(\frac{5}{2}q\log q + cq\right)$ for a numerical constant $c < \infty$. 
        \item For all $x \in \R^{T}$, $q \in \mathbb{N}
        $ and $i \in [T]$, we have $\|\nabla^q_i h(x)\|_{\text{op}} \leq \ell_{q-1}$, where $\ell_0 = 23$. 
        \item For all $x \in \R^T$ and $q \in \mathbb{N} $, $\text{prog}\left(\nabla^q h(x)\right) \leq \text{prog}_{\frac12} (x) +1 $.
        \item For all $x \in \R^T$, if $\text{prog}_1(x) < T$, then $\|\nabla h(x)\| \geq |\nabla_{\text{prog}_1(x)+1} h(x)| > 1$.
    \end{itemize}
\end{lemma}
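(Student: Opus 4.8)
The plan is to reduce all five properties to elementary estimates on the two scalar building blocks $\Psi$ and $\Phi$, and then to propagate these through the bidiagonal ``chain'' structure of $h$ (each summand is a product of two univariate factors evaluated at adjacent coordinates, so $x_i$ enters only summands $i$ and $i+1$). First I would record the scalar facts. For $\Phi$, since $\Phi'(x)=\sqrt{e}\,e^{-x^2/2}$, it is $C^\infty$, strictly increasing, with $0<\Phi<\sqrt{2\pi e}$ and every derivative expressible as a Hermite polynomial times a Gaussian (hence uniformly bounded). For $\Psi$ the only regularity issue is at $x=1/2$; the standard bump-function argument (all derivatives of $\exp(1-1/(2x-1)^2)$ tend to $0$ as $x\downarrow 1/2$) gives $\Psi\in C^\infty$ with $\Psi\equiv 0$ and $\Psi^{(k)}\equiv 0$ on $(-\infty,1/2]$, together with $0\le\Psi\le e$ and $\Psi(x)>1$ for $x>1$ (using $\Psi(1)=1$ and monotonicity on $(1/2,\infty)$). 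I would also extract the sharp quantitative fact $\Phi'(x)\ge 1$ for $|x|\le 1$, since $\sqrt{e}\,e^{-1/2}=1$; this is what drives Property~5.

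I would then treat the properties in increasing order of difficulty. Property~1 follows by dropping the nonnegative terms $\Psi(-x_{i-1})\Phi(-x_i)\ge 0$ and bounding each surviving product by $\sup\Psi\cdot\sup\Phi=e\sqrt{2\pi e}<12$, so $\inf_x h\ge -12T$, while $h(0)=-\Phi(0)<0$ (all cross terms vanish at $0$ because $\Psi(0)=0$), giving $h(0)-\inf h\le 12T$. Properties~4 and~5 form the combinatorial core. For Property~4 (zero-chain) I observe that $\partial_{x_j}h$ collects contributions only from summands $j$ and $j+1$: the summand-$j$ part carries a factor $\Psi(\pm x_{j-1})$ and the summand-$(j+1)$ part a factor $\Psi'(\pm x_j)$, both vanishing unless the relevant coordinate exceeds $1/2$ in magnitude; hence a nonzero $\partial_{x_j}h$ forces $j-1\le\text{prog}_{1/2}(x)$, i.e. $\text{prog}(\nabla h)\le\text{prog}_{1/2}(x)+1$, and for higher $q$ the support only shrinks since differentiation introduces no new coordinate dependence.

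For Property~5, set $i^\star=\text{prog}_1(x)$, so $|x_{i^\star}|>1$ while $|x_j|\le 1$ for all $j>i^\star$. I would compute $\partial_{x_{i^\star+1}}h$: exactly one of $\Psi(x_{i^\star}),\Psi(-x_{i^\star})$ exceeds $1$ and the other is $0$, and the matching factor $\Phi'(\pm x_{i^\star+1})\ge 1$ because $|x_{i^\star+1}|\le 1$, yielding $|\partial_{i^\star+1}h|>1$ after a short sign analysis of the two contributing summands. Since $\|\nabla h(x)\|\ge|\nabla_{i^\star+1}h(x)|$, the claim follows.

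The genuinely hard part is Properties~2 and~3, the derivative and Lipschitz bounds with the sharp constant $\ell_q\le\exp(\tfrac52 q\log q+cq)$. Here I would bound the high-order derivatives of $\Psi$, whose $k$-th derivative inherits factorial-type (Gevrey-class) growth from the essential singularity of $e^{-1/u^2}$ at $u=0$: via Fa\`a di Bruno / a generating-function estimate one obtains $\sup_x|\Psi^{(k)}(x)|\le\exp(\tfrac52 k\log k+O(k))$, while the Gaussian tail gives a comparable but milder bound for $\Phi^{(k)}$. Because each coordinate appears in at most two summands and each summand is a product of two such scalar factors, the product rule combined with this bounded overlap converts the one-dimensional derivative bounds into operator-norm bounds on the sparse tensors $\nabla^q h$ and $\nabla_i^q h$ (Property~3, with base case $\ell_0=23$ obtained from the explicit gradient estimate) and into the Lipschitz bound of Property~2. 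I expect the precise tracking of the constant $\tfrac52$ in the derivative growth of $\Psi$ to be the main obstacle, as it is the only step requiring a delicate rather than crude estimate; all other properties reduce to the elementary scalar facts and the two-summand sparsity.
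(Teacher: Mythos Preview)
The paper does not prove this lemma; it is stated with attribution to \cite{carmon2020lower} and used as a black box in the lower-bound construction. Your proposal, by contrast, sketches an actual proof from first principles, and the outline you give is essentially the one carried out in the cited reference: the elementary bounds for Properties~1, 4, and 5 via the vanishing and monotonicity of $\Psi$ together with the explicit form $\Phi'(x)=\sqrt{e}\,e^{-x^2/2}\ge 1$ on $[-1,1]$, and the Gevrey-type growth estimates on the derivatives of the bump function $\Psi$ for Properties~2 and~3. Your identification of the $\exp(\tfrac52 q\log q)$ bound on $\sup_x|\Psi^{(q)}(x)|$ as the only genuinely delicate step is accurate; everything else is routine once the bidiagonal structure is exploited. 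One minor point worth tightening in Property~5 is the ``short sign analysis'': when $1/2<|x_{i^\star+1}|\le 1$ the contribution from summand $i^\star+2$ is not zero, but a case check shows it always has the same sign as the dominant contribution from summand $i^\star+1$, so the two reinforce rather than cancel.
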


\begin{lemma}
\label{lem:lower_bound_3}
    Let the derivative estimator is defined as follows for each $r \in [q]$
    \begin{equation*}
        \left[{\nabla^r h}(x,\xi)\right]_i \eqdef \left(1 + \mathbb{I}\left\{ i > \text{prog}_{\frac{1}{4}}(x)\right\}\left(\frac{\xi}{\rho}-1\right)\right) \cdot \nabla^r_i h(x),
    \end{equation*}
    where $\xi \sim \text{Bernoulli}(\rho)$. Then, $\{\nabla^r h(x,\xi)\}_{r \in [q]}$ forms probability-$\rho$ zero-chain, and  for any $p \in [1,2]$ and for each $r \in [q]$ the derivative estimator satisfies 
    \begin{equation*}
        \EE\left[{\nabla^r h}(x,z)\right] = {\nabla^r h}(x),\quad\text{and}\quad \EE\left[\left\|{\nabla^r h}(x,\xi) - {\nabla^r h}(x)\right\|^p_{\text{op}}\right] \leq \frac{2\ell^p_{r-1}}{\rho^{p-1}}  \qquad \text{for all } x\in\R^d.
    \end{equation*}
\end{lemma}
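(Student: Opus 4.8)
\textbf{Proof plan for Lemma~\ref{lem:lower_bound_3}.}
The plan is to verify the three claims in turn: the unbiasedness of the estimator, the probability-$\rho$ zero-chain property, and the bounded $p$-th central moment. First I would prove unbiasedness: since $\xi \sim \text{Bernoulli}(\rho)$ satisfies $\EE[\xi/\rho - 1] = 0$, the multiplicative factor $1 + \mathbb{I}\{i > \text{prog}_{1/4}(x)\}(\xi/\rho - 1)$ has expectation $1$ for every coordinate $i$, so $\EE[\nabla^r_i h(x,\xi)] = \nabla^r_i h(x)$ componentwise, hence $\EE[\nabla^r h(x,\xi)] = \nabla^r h(x)$. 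Note that although $\xi$ is shared across all orders $r$, this does not affect unbiasedness since we only need the marginal expectation for each $r$.

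Next I would check the zero-chain property. The key observation is that the random factor only modifies coordinates with index $i > \text{prog}_{1/4}(x)$, and by Lemma~\ref{lem:lower_bound_2} the deterministic derivatives $\nabla^r h(x)$ already satisfy $\text{prog}(\nabla^r h(x)) \leq \text{prog}_{1/2}(x) + 1 \leq \text{prog}_{1/4}(x) + 1$. So the only coordinate that can possibly be activated ``above'' the current progress is coordinate $\text{prog}_{1/4}(x) + 1$, and it is activated (i.e. the estimator is nonzero there) only when $\xi = 1$, which happens with probability $\rho$. This gives the first defining inequality of the probability-$\rho$ zero-chain with the bound $\rho$; the second (that progress can never jump by more than $1$) follows because no coordinate beyond $\text{prog}_{1/4}(x)+1$ is ever nonzero in any realization, since the deterministic derivative vanishes there and the multiplicative factor cannot create a nonzero entry out of a zero one.

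For the moment bound, I would compute directly: the difference $\nabla^r h(x,\xi) - \nabla^r h(x)$ is supported only on coordinates $i > \text{prog}_{1/4}(x)$, on which it equals $(\xi/\rho - 1)\nabla^r_i h(x)$. When $\xi = 0$ this difference has operator norm at most $\|\nabla^r h(x)\|_{\text{op}} \leq \ell_{r-1}$ (using the per-coordinate bound $\|\nabla^r_i h(x)\|_{\text{op}} \leq \ell_{r-1}$ and the zero-chain structure, which ensures at most one relevant coordinate contributes); when $\xi = 1$ it equals $(1/\rho - 1)$ times that, so its norm is at most $(1-\rho)\ell_{r-1}/\rho \leq \ell_{r-1}/\rho$. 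Taking expectation over $\xi$: $\EE[\|\cdot\|^p_{\text{op}}] \leq (1-\rho)\ell_{r-1}^p + \rho \cdot ((1-\rho)/\rho)^p \ell_{r-1}^p = \ell_{r-1}^p(1-\rho)(1 + (1-\rho)^{p-1}/\rho^{p-1}) \leq \ell_{r-1}^p(1 + 1/\rho^{p-1}) \leq 2\ell_{r-1}^p/\rho^{p-1}$, where the last step uses $\rho \leq 1$ so that $1 \leq \rho^{-(p-1)}$. This yields the claimed bound $2\ell_{r-1}^p/\rho^{p-1}$.

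The main obstacle I anticipate is not any single computation but the bookkeeping around the operator norm in the multi-coordinate case: one must argue carefully that only one coordinate $i$ with $i > \text{prog}_{1/4}(x)$ can have $\nabla^r_i h(x) \neq 0$, so that $\|\nabla^r h(x,\xi) - \nabla^r h(x)\|_{\text{op}}$ reduces to $|\xi/\rho - 1|$ times a single $\|\nabla^r_i h(x)\|_{\text{op}} \leq \ell_{r-1}$, rather than a sum over many coordinates. This follows from the fourth property in Lemma~\ref{lem:lower_bound_2} ($\text{prog}(\nabla^r h(x)) \leq \text{prog}_{1/2}(x)+1$) combined with the fact that the component-tensor extraction $[\,\cdot\,]_i$ used in defining the estimator is consistent with the notion of support, but it should be stated explicitly to make the norm bound rigorous.
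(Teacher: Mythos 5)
Your proposal is correct and follows essentially the same route as the paper: unbiasedness from $\EE[\xi/\rho-1]=0$, the zero-chain property from $\text{prog}(\nabla^r h(x)) \leq \text{prog}_{1/2}(x)+1 \leq \text{prog}_{1/4}(x)+1$ so that only coordinate $\text{prog}_{1/4}(x)+1$ can be newly activated (with probability $\rho$), and the moment bound via the two-point computation of $\EE[|\xi/\rho-1|^p]$ together with $\|\nabla^r_i h(x)\|_{\text{op}} \leq \ell_{r-1}$, yielding exactly the paper's intermediate expression $\ell_{r-1}^p\bigl((1-\rho)^p\rho + (1-\rho)\rho^p\bigr)/\rho^p \leq 2\ell_{r-1}^p/\rho^{p-1}$. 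The bookkeeping point you flag (that effectively a single coordinate carries the perturbation) is handled in the paper with the same level of informality you describe, so no gap relative to the reference proof.
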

\begin{proof}
    First of all, it is easy to show that the proposed estimator is unbiased, i.e.
    \begin{equation*}
        \EE\left[{\nabla^r h}(x,\xi)\right] = (1+0)\cdot \nabla^r h(x) = \nabla^r h(x).
    \end{equation*}
    According to Lemma~\ref{lem:lower_bound_2}, we have  $\text{prog}\left(\nabla^r h(x)\right) < \text{prog}_{\frac12}(x) +1 \leq \text{prog}_{\frac14}(x) +1$, where the last inequality is true because $\text{prog}_{\alpha}$ is non-increasing with respect to $\alpha$. Then, by Lemma~\ref{lem:lower_bound_2} we have 
    \begin{equation*}
        \left[{\nabla^r h}(x,\xi)\right]_i = \nabla^r_i h(x) = 0,\quad \forall i > \text{prog}_{\frac14}(x) + 1.
    \end{equation*}
    Therefore, due to that $\xi \sim \text{Bernoulli}(\rho)$  we obtain
    \begin{equation*}
        \PP\left\{\exists x~|~ \text{prog}\left({\nabla^1 f}(x,z), \dots, {\nabla^q f}(x,z)\right) = \text{prog}_{\frac{1}{4}}(x) +1 \right\} \leq \rho,
    \end{equation*}
    i.e. $\{\nabla^r h(x,\xi)\}_{r \in [q]}$ forms probability-$\rho$ zero-chain.

Now we bound $p$-th moment of ${\nabla^r h}(x,\xi) -\nabla^r h(x)$ for any $p \in [1;2]$. Denoting $i_x = \text{prog}_{\frac14}(x) +1 $, for any $r \in [q]$  we have 
\begin{eqnarray*}
    \EE\left[\left\|{\nabla^r h}(x,\xi) -\nabla^r h(x) \right\|^{p}_{\text{op}}\right] &=& \EE\left[\left|\mathbb{I}\left\{ i_x > \text{prog}_{\frac14}(x)\right\}\left(\frac{\xi}{\rho}-1\right)\right|^{p} \|\nabla^r_i h(x)\|^{p}_{\text{op}}\right]\\
    &=& \EE\left[\left|\frac{\xi}{\rho}-1\right|^{p}\right] \|\nabla^r_i h(x)\|^{p}_{\text{op}}\\
    &\leq& \ell^{p}_{r-1}\EE\left[\left|\frac{\xi}{\rho}-1\right|^{p}\right]\\
    &=&  \ell^{p}_{r-1} \frac{(1-\rho)^{p}\rho + (1-\rho)\rho^{p}}{\rho^{p}}\\
    &\leq&   \ell^{p}_{r-1} \frac{2}{\rho^{p-1}}. 
\end{eqnarray*}
\end{proof}

\textbf{Main Theorem.} Now we are ready to state and prove the main theorem in this section, i.e. Theorem~\ref{thm:lower_bounds}. 

\begin{proof} First of all, we fix all parameters: $q\in \mathbb{N}, \Delta, L_{1:q}, \sigma_{1:q} > 0 $  and $\varepsilon > 0$. Next, we rescale our function $h$ as follows: $h^{\star}(x) = \nu h(\beta x)$, where $\nu> 0$ and $\beta >0$.
According to Lemma~\ref{lem:lower_bound_2}, selecting $T = \left\lfloor \frac{\Delta}{\nu\Delta_0}\right\rfloor$, we have  for any $r \in [q-1]$ 
\begin{eqnarray*}
    h^{\star}(0) - \inf\limits_x h^{\star}(x) &=& \nu\left(h^{\star}(0) - \inf\limits_x h^{\star}( \beta x)\right) \leq \nu \Delta_0 T \leq \Delta;\\
    \|\nabla^{r+1} h^{\star}(x)\|_{\text{op}} &=& \nu \beta^{r+1}\|\nabla^{r+1} h(\beta x)\| \leq \nu \beta^{r+1}\ell_r;\\
    \|\nabla h^{\star}(x)\| &=& \nu\beta\|\nabla h(\beta x)\| \geq \nu \beta \|\nabla h(x)\| \geq \nu \beta, ~\forall~x:~\text{prog}_1(x) < T. 
\end{eqnarray*}
Since $\left\{{\nabla^r h^{\star}}(x,\xi) = \nu\beta^r{\nabla^r h}(\beta x,\xi) \right\}_{r\in [p]}$ forms a probability-$\rho$ zero-chain, then by Lemma~\ref{lem:lower_bound_2}  with probability at least $\delta = \frac{1}{2}$ we have 
\begin{equation*}
    \EE\left[\left\|\nabla h^{\star}(x^{t}_{A[O_h^q]})\right\|\right] = \nu \beta \EE\left[\left\|\nabla  h^{\star}(\beta x^{t}_{A[O_h^q]})\right\|\right] \geq \frac{\nu \beta}{2},\quad \forall~t\leq \frac{T-1}{2\rho}
\end{equation*}
The $p$-th  central moment of the scaled derivatives estimators is bounded as 
\begin{eqnarray*}
    \EE\left[\left\|{\nabla^r  h^{\star}}(x,\xi) - \nabla^r  h^{\star} (x)\right\|_{\text{op}}^{p}\right] &\leq& \nu^{p}\beta^{rp} \EE\left[\left\|{\nabla^r h}(\beta x,\xi) - \nabla^r h (\beta x)\right\|_{\text{op}}^{p}\right]\\
    &\leq& \frac{2\nu^{p}\beta^{rp}\ell_{r-1}^{p}}{\rho^{p-1}},
\end{eqnarray*}
where in the last inequality we applied Lemma~\ref{lem:lower_bound_3}.
Thus, we have the set of constraints for parameters $\nu$ and $\beta$ for all $r \in [q]$:
\begin{equation*}
    \begin{cases}
        \nu \Delta_0 T \leq \Delta; \\
        \nu \beta^{r+1}\ell_r \leq L_r;\\
        \frac{\nu\beta}{2} \geq \varepsilon; \\
        \nu^{p}\beta^{p r}\ell^{p}_{r-1}\frac{2}{\rho^{p-1}} \leq \sigma^{p}_r.
    \end{cases}
\end{equation*}
We will resolve the system of inequalities step by step. The first step is to set $\nu = \frac{2\varepsilon}{\beta}$.  For $r = 1$, we have 
\begin{eqnarray*}
    \nu^{p}\beta^{p}\ell_0^{p} \cdot \frac{2}{\rho^{p-1}} \leq \sigma^{p}_1\quad \Rightarrow\quad \rho = \min\left\{\left(\frac{\varepsilon}{\sigma_1}\right)^{\frac{p}{p-1}}\cdot 2^{\frac{p+1}{p-1}}\ell_0^{\frac{p}{p-1}}, 1\right\} .
\end{eqnarray*} 
To set  $\beta$, we need to find its specific value, which can satisfy the following constraints: for any $r \in \{2, \dots, q\} $ and  any $r' \in [q]$ 
\begin{equation*}
    \beta^{r+1} \leq \frac{L_r}{\nu \ell_r} = \frac{\beta L_r}{2\varepsilon\ell_{r}}, \quad \nu^{p}\beta^{p r}\ell^{p}_{r-1}\frac{2}{\rho^{p-1}}\overset{(\ast)}{\leq} \beta^{p(r-1)}\left(\frac{\sigma_1 \ell_{r-1}}{\ell_0}\right)^p \leq \sigma^{p}_r,
\end{equation*}
where in the inequality $(\ast)$ we used  $\frac{2}{\rho^{p-1}}\leq \frac{\sigma^p_1}{\nu^p\beta^p\ell_0^p}$. Therefore, we obtain 
\begin{equation*}
    \beta = \min_{{r' \in [q];}~{r \in \{2,\dots, q\}}}\min\left\{ \left(\frac{\ell_0\sigma_r}{\ell_{r-1}\sigma_1}\right)^{\frac{1}{r-1}}, \left(\frac{L_{r'}}{2\varepsilon\ell_{r'}}\right)^{\frac{1}{r'}}\right\}.
\end{equation*}
Then, assuming $T \geq 3$ and $\left(\frac{\varepsilon}{\sigma_1}\right)^{\frac{p}{p-1}}\cdot 2^{\frac{p+1}{p-1}}\ell_0^{\frac{p}{p-1}} \leq 1$, we get
\begin{eqnarray*}
    \frac{T-1}{2\rho} &=&\frac{1}{2\rho}\left(\left\lfloor\frac{\Delta\beta}{2\Delta_0\varepsilon}\right\rfloor -1 \right) \geq  \frac{\Delta \beta}{8\rho\Delta_0\varepsilon} \\
    &\geq& \left(\frac{\sigma_1}{2\varepsilon\ell_0}\right)^{\frac{p}{p-1}} \cdot \frac{\Delta}{4\Delta_0\varepsilon} \min_{{r' \in [q];}~{r \in \{2,\dots, q\}}}\min\left\{ \left(\frac{\ell_0\sigma_r}{\ell_{r-1}\sigma_1}\right)^{\frac{1}{r-1}}, \left(\frac{L_{r'}}{2\varepsilon\ell_{r'}}\right)^{\frac{1}{r'}}\right\}\\
    &\geq&\Omega(1)\cdot\frac{\Delta}{\varepsilon}\left(\frac{\sigma_1}{\varepsilon}\right)^{\frac{p}{p-1}}  \min\left\{ \min_{r \in \{2,\dots, q\}}\left(\frac{\sigma_r}{\sigma_1}\right)^{\frac{1}{r-1}}, \min_{r' \in [q]}\left(\frac{L_{r'}}{\varepsilon}\right)^{\frac{1}{r'}}\right\}.
\end{eqnarray*}
Since $h$ and $h^{\star}$ are functions of $T$ arguments, the dimension of the problem is equal to $T$. This concludes the proof. 
\end{proof}

\newpage
\section{Missing Proofs for Section~\ref{sec:NSGD_SOM_conv}}
\label{app:missing_proofs_NSGD_SOM}
In this section we study Normalized SGD with Hessian correction (NSGD-Hess) (see Algorithm~\ref{alg:NSGD_SOM}). 

\textbf{Auxiliary Lemmas.} To show the convergence guarantees for Algorithm~\ref{alg:NSGD_SOM}, we state and prove auxiliary lemmas. The first one is well-known \textit{Descent Lemma} for Normalized SGD. The proof for this lemma can be found in \cite{MomentumImprovesNSGD_Cutkosky_2020,Hubler2024clip_to_norm}, but for completeness we restate and reprove it. The second one is devoted to the bounds on \textit{error} term from \textit{Descent Lemma}. 

\begin{lemma}
\label{lem:descent_lemma}
    Let Assumptions~\ref{assum:lower_bounded} and~\ref{assum:l_smooth} hold. Then for any selection of stepsize $\gamma >0$ the iterates $\{x_t\}^{T}_{t=0}$ generated by Algorithm~\ref{alg:NSGD_SOM} satisfy
    \begin{equation}
        \gamma \sum^{T-1}_{t=0}\|\nabla F(x_t)\| + \Delta_{T} \leq \Delta_0 +2\gamma \sum^{T-1}_{t=0}\|\hat{e}_t\| + \frac{\gamma^2 L T}{2}.
    \end{equation}
    where functional gap is defined as $\Delta_t \eqdef F(x_t) -F_*$, error term is defined as $\hat{e}_t \eqdef g_t - \nabla F(x_t)$.
\end{lemma}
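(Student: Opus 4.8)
\textbf{Proof plan for Lemma~\ref{lem:descent_lemma} (Descent Lemma).}
The plan is to run the standard analysis of normalized (S)GD. First I would apply $L$-smoothness of $F$ along the step $x_{t+1} = x_t - \gamma g_t/\norm{g_t}$ to obtain
\[
F(x_{t+1}) \leq F(x_t) - \gamma \left\la \nabla F(x_t), \frac{g_t}{\norm{g_t}}\right\ra + \frac{L\gamma^2}{2}.
\]
The key trick is to rewrite the inner product in terms of the error $\hat e_t = g_t - \nabla F(x_t)$: write $\nabla F(x_t) = g_t - \hat e_t$, so that
\[
-\left\la \nabla F(x_t), \frac{g_t}{\norm{g_t}}\right\ra = -\norm{g_t} + \left\la \hat e_t, \frac{g_t}{\norm{g_t}}\right\ra \leq -\norm{g_t} + \norm{\hat e_t}
\]
by Cauchy--Schwarz. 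Then bound $-\norm{g_t} = -\norm{\nabla F(x_t) + \hat e_t} \leq -\norm{\nabla F(x_t)} + \norm{\hat e_t}$ by the triangle inequality. Combining these two estimates gives the per-step inequality
\[
F(x_{t+1}) \leq F(x_t) - \gamma\norm{\nabla F(x_t)} + 2\gamma\norm{\hat e_t} + \frac{L\gamma^2}{2}.
\]

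Next I would handle the first step $t=0$ separately, since $x_1 = x_0 - \gamma g_0/\norm{g_0}$ has the same form (with $g_0$ the initial estimate), so the same per-step bound applies with $t=0$. Then I would sum the per-step inequality over $t = 0, 1, \dots, T-1$, telescoping the $F(x_{t+1}) - F(x_t)$ terms to get $F(x_T) - F(x_0)$, i.e. $\Delta_T - \Delta_0$ after subtracting $F_*$ from both sides. Rearranging yields
\[
\gamma\sum_{t=0}^{T-1}\norm{\nabla F(x_t)} + \Delta_T \leq \Delta_0 + 2\gamma\sum_{t=0}^{T-1}\norm{\hat e_t} + \frac{\gamma^2 L T}{2},
\]
which is exactly the claimed inequality.

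There is essentially no hard part here: the only things to be careful about are (i) confirming that the normalized update is well-defined, i.e. implicitly assuming $g_t \neq 0$ at each step (or noting that on the event $g_t = 0$ one uses the convention that the step is zero and $\norm{\nabla F(x_t)} \leq \norm{\hat e_t}$ trivially, so the bound still holds), and (ii) making sure the first iteration is covered by the same argument. Both are routine. The real difficulty in the overall convergence proof is deferred to the next lemma, which must control $\sum_t \EE\norm{\hat e_t}$ for the recursively-defined Hessian-corrected momentum under the $p$-BCM assumptions; the Descent Lemma itself is just bookkeeping.
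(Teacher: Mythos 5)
Your proposal is correct and follows essentially the same argument as the paper: smoothness along the normalized step, then decomposing the inner product via $\hat e_t$ with Cauchy--Schwarz and the triangle inequality to get the per-step bound $F(x_{t+1}) \leq F(x_t) - \gamma\norm{\nabla F(x_t)} + 2\gamma\norm{\hat e_t} + \frac{L\gamma^2}{2}$, followed by summing/telescoping. Your extra remarks about the $g_t = 0$ edge case and the first iteration are fine but not needed beyond what the paper does.
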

\begin{proof}
    According to the update rule for $x_t$ and Assumption~\ref{assum:l_smooth}, we have 
    \begin{eqnarray*}
    F(x_{t+1}) &\leq& F(x_t) +\la \nabla F(x_t), x_{t+1} -x_{t} \ra + \frac{L}{2}\|x_{t+1} - x_t\|^2 \\
    &=& F(x_t) - \gamma \left\la \nabla F(x_t), \frac{g_t}{\|g_t\|} \right\ra + \frac{\gamma^2 L}{2}\\
    &=& F(x_t) - \gamma \|g_t\| - \gamma \left\la \nabla F(x_t) - g_t, \frac{g_t}{\|g_t\|} \right\ra + \frac{\gamma^2 L}{2}\\
    &\leq& F(x_t) - \gamma \|g_t\| +\gamma \|\nabla F(x_t) - g_t\| + \frac{\gamma^2 L}{2}\\
    &\leq& F(x_t) - \gamma \|\nabla F(x_t)\| +2\gamma \|\nabla F(x_t) - g_t\| + \frac{\gamma^2 L}{2}.
\end{eqnarray*}
Using notation for the functional gap and the error term, we get
\begin{equation*}
    \gamma \|\nabla F(x_t)\| + \Delta_{t+1} \leq \Delta_t +2\gamma \|\hat{e}_t\| + \frac{\gamma^2 L}{2}.
\end{equation*}
Summing over $t$ from $0$ to $T-1$, we obtain 
\begin{equation}
    \gamma \sum^{T-1}_{t=0}\|\nabla F(x_t)\| + \Delta_{T} \leq \Delta_0 +2\gamma \sum^{T-1}_{t=0}\|\hat{e}_t\| + \frac{\gamma^2 L T}{2}.
\end{equation}
\end{proof}

Next, we derive the lemma for error control for the Hessian corrected momentum estimator (lines $4-7$ in Algorithm~\ref{alg:NSGD_SOM}). Similar lemma for the special case $p=2$ appeared previously in \citep{salehkaleybar-et-al22,Fatkhullin_SPGM_FND_2023}. For the case $p<2$, similar recursion was derived by \citet{Hubler2024clip_to_norm} for first-order momentum. Now we extend this idea to the case of second-order momentum. 
\begin{lemma}\label{lem:dev_bound}
    Let Assumptions~\ref{assum:l_smooth},~\ref{ass:p_BCM} and ~\ref{ass:p_BCM_hess} hold. Then for all $t \geq 0$, we have 
\begin{equation}\label{eq:dev_bound}
     \Exp{\norm{ \hat{e}_{t} }} \leq (1-\alpha)^{t} \Exp{\norm{\hat{e}_0}} + 2\alpha^{\frac{p-1}{p}}\sigma + 12\gamma(L+\sigma_h)\alpha^{-\nicefrac{1}{p}},
\end{equation}
where $\hat e_t : = g_t - \nabla F(x_t)$.
\end{lemma}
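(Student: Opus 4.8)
The plan is to derive a one-step recursion for the momentum error $\hat e_t = g_t - \nabla F(x_t)$ and then unroll it. Substituting $g_{t-1} = \hat e_{t-1} + \nabla F(x_{t-1})$ into the update of line~7 in Algorithm~\ref{alg:NSGD_SOM} and splitting $\nabla F(x_t) = (1-\alpha)\nabla F(x_t) + \alpha\nabla F(x_t)$, one gets for $t\geq 1$
\begin{equation*}
\hat e_t = (1-\alpha)\hat e_{t-1} + (1-\alpha)\, S_t + \alpha\, \theta_t,
\end{equation*}
where $S_t := \nabla^2 f(\hat x_t,\hat\xi_t)(x_t - x_{t-1}) - \bigl(\nabla F(x_t) - \nabla F(x_{t-1})\bigr)$ is the Hessian-vector-product error and $\theta_t := \nabla f(x_t,\xi_t) - \nabla F(x_t)$ is the gradient noise. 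Let $\mathcal F_{t-1}$ be the $\sigma$-algebra containing all randomness up to and including the computation of $x_t$. Both terms are conditionally mean-zero: $\E[\theta_t\mid\mathcal F_{t-1}] = 0$ by Assumption~\ref{ass:p_BCM}, and $\E[S_t\mid\mathcal F_{t-1}] = 0$ is exactly the identity \eqref{eq:expectation_of_hv}, valid because $q_t,\hat\xi_t$ are fresh samples while $x_t,x_{t-1}$ are $\mathcal F_{t-1}$-measurable. Since $(q_t,\hat\xi_t)$ is independent of $\xi_t$, both $\{(1-\alpha)^{t-j+1}S_j\}_{j\le t}$ and $\{\alpha(1-\alpha)^{t-j}\theta_j\}_{j\le t}$ are martingale difference sequences.

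Unrolling the recursion gives $\hat e_t = (1-\alpha)^t \hat e_0 + \sum_{j=1}^t (1-\alpha)^{t-j+1}S_j + \alpha\sum_{j=1}^t (1-\alpha)^{t-j}\theta_j$, so by the triangle inequality it suffices to bound the $L^1$-norm of each martingale sum. First I would control the conditional $p$-th moments of the increments. For $\theta_j$, Assumption~\ref{ass:p_BCM} gives directly $\E[\norm{\theta_j}^p\mid\mathcal F_{j-1}] \leq \sigma^p$. For $S_j$, the crucial point is that normalization forces $\norm{x_j - x_{j-1}} = \gamma$ exactly; writing $S_j$ as the sum of $(\nabla^2 f(\hat x_j,\hat\xi_j) - \nabla^2 F(\hat x_j))(x_j-x_{j-1})$ and $\nabla^2 F(\hat x_j)(x_j-x_{j-1}) - (\nabla F(x_j)-\nabla F(x_{j-1}))$, bounding the first piece by $\gamma\,\norm{\nabla^2 f(\hat x_j,\hat\xi_j)-\nabla^2 F(\hat x_j)}_{\mathrm{op}}$ and invoking Assumption~\ref{ass:p_BCM_hess} at the (conditionally fixed) point $\hat x_j$, and bounding the second by $2L\gamma$ using $L$-smoothness (which yields $\norm{\nabla^2 F}_{\mathrm{op}}\leq L$), one obtains $\E[\norm{S_j}^p\mid\mathcal F_{j-1}] \leq 2^{p-1}\gamma^p(\sigma_h^p + (2L)^p) \leq C\gamma^p(L+\sigma_h)^p$ for a numerical constant $C$.

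Next I would apply the von Bahr–Esseen inequality (Lemma~\ref{lem:von_Bahr_inequality}) to each sum. This gives $\E\bigl[\norm{\alpha\sum_{j=1}^t (1-\alpha)^{t-j}\theta_j}^p\bigr] \leq 2\alpha^p\sigma^p \sum_{k=0}^{t-1}(1-\alpha)^{kp}$ and $\E\bigl[\norm{\sum_{j=1}^t (1-\alpha)^{t-j+1}S_j}^p\bigr] \leq 2C\gamma^p(L+\sigma_h)^p \sum_{k=1}^{t}(1-\alpha)^{kp}$. The geometric series is controlled using $(1-\alpha)^p \leq 1-\alpha$ for $p\in(1,2]$ and $\alpha\in(0,1]$, which yields $\sum_{k=0}^{\infty}(1-\alpha)^{kp} \leq \tfrac{1}{1-(1-\alpha)^p} \leq \tfrac{1}{\alpha}$. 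Hence the two moments are at most $2\alpha^{p-1}\sigma^p$ and $\tfrac{2C}{\alpha}\gamma^p(L+\sigma_h)^p$, and taking $p$-th roots (Jensen's inequality, since $\norm{\cdot}$ is convex) turns them into $\leq 2\alpha^{\frac{p-1}{p}}\sigma$ and $\leq 12\gamma(L+\sigma_h)\alpha^{-1/p}$ after absorbing the constant. Combining with $\E[\norm{(1-\alpha)^t\hat e_0}] = (1-\alpha)^t\E[\norm{\hat e_0}]$ establishes \eqref{eq:dev_bound}; the case $t=0$ is immediate since the right-hand side then exceeds $\E[\norm{\hat e_0}]$.

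The main obstacle I expect is the bound on $\E[\norm{S_j}^p\mid\mathcal F_{j-1}]$ together with the verification that $S_j$ is genuinely conditionally mean-zero: this is precisely where the random interpolation point $\hat x_t$ and the independent fresh sample $\hat\xi_t$ are indispensable, and it is the component with no analogue in the first-order momentum analysis of \citet{Hubler2024clip_to_norm}. Once the recursion and this moment estimate are secured, the rest is a routine geometric-series computation.
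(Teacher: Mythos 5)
Your proof follows essentially the same route as the paper's: the identical recursion $\hat e_t=(1-\alpha)\hat e_{t-1}+(1-\alpha)S_t+\alpha\theta_t$, the same conditional mean-zero verification via the random interpolation point, and the same combination of Jensen, the von Bahr--Esseen bound (Lemma~\ref{lem:von_Bahr_inequality}) and the geometric series $\sum_k(1-\alpha)^{kp}\le \nicefrac{1}{\alpha}$. The only (cosmetic) deviation is your two-term split of $S_j$, whose stated estimate $2^{p-1}\gamma^p(\sigma_h^p+(2L)^p)$ yields a constant slightly larger than $12$ when $p$ is near $1$; this is easily repaired, e.g.\ by using Minkowski's inequality to get $\Exp{\norm{S_j}^p}^{\nicefrac{1}{p}}\le 2\gamma(L+\sigma_h)$, after which the claimed constant follows.
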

\begin{proof}
Define $e_t = \nabla f(x_t, \xi_t) - \nabla F(x_t)$, $ \hat S_{t+1} =  \nabla^2 f(\hat x_{t+1}, \hat \xi_{t+1}) \cdot (x_{t+1} - x_t)  - \nabla F(x_{t+1}) + \nabla F(x_t)$. We have $\Exp{e_t} = 0$, $\Exp{\norm{e_t}^p} \leq \sigma^p$, $\Exp{\hat S_t} = 0$, and 
\begin{eqnarray*}
\Exp{\norm{\hat S_t}^p} &=& \Exp{\norm{ \nabla F(x_t) - \nabla F(x_{t+1}) + \nabla^2 f(\hat x_{t+1}, \hat \xi_{t+1}) \cdot (x_{t+1} - x_t) }^p } \\
&\leq& 3 \Exp{\norm{ \nabla F(x_t) - \nabla F(x_{t+1}) }^p} + 3 \Exp{\norm{ \nabla^2 F(\hat x_{t+1}) \cdot (x_{t+1} - x_t) }^p }  \\
&& \qquad  + 3 \Exp{\norm{ \rb{ \nabla^2 f(\hat x_{t+1}, \hat \xi_{t+1}) - \nabla^2 F(\hat x_{t+1}) }  \cdot (x_{t+1} - x_t) }^p } \\
&\leq& 6 L^p \gamma^p + 3 \sigma_h^p \gamma^p  . 
\end{eqnarray*}

By the update rule for the gradient estimator: 
\begin{eqnarray}
  \hat{e}_{t} &=& {g}_{t} - \nabla F(x_{t}) =   (1- \alpha)  \hat{e}_{t-1} + \alpha  e_{t} + (1- \alpha) \hat S_t  \notag .
\end{eqnarray}
Unrolling the recursion, we have
    \begin{eqnarray}\label{eq:hat_et_unrolled}
    \hat{e}_{t} &= &  (1-\alpha)^{t} \hat{e}_0 + \alpha \sum_{j = 0}^{t-1} (1-\alpha)^{t-j-1}  e_{j+1}  + \sum_{j = 0}^{t-1} (1-\alpha)^{t-j} \hat S_{j+1} \,. \notag 
\end{eqnarray}
Then, taking the norm and the total expectation, we get 
\begin{equation}
\label{eq:dhsbhdsdsiwp}
    \Exp{\norm{ \hat{e}_{t} }} \leq(1-\alpha)^{t} \Exp{\norm{\hat{e}_0}} + \Exp{\norm{\alpha \sum_{j = 0}^{t-1} (1-\alpha)^{t-j-1}  e_{j+1}}}  + \Exp{\norm{\sum_{j = 0}^{t-1} (1-\alpha)^{t-j} \hat S_{j+1}}}.
\end{equation}
To continue the proof, we need to bound the last two terms from the previous inequality. By Jensen's inequality, we obtain
\begin{eqnarray}
    \Exp{\norm{\alpha \sum_{j = 0}^{t-1} (1-\alpha)^{t-j-1}  e_{j+1}}} &\leq& \left(\Exp{\norm{\alpha \sum_{j = 0}^{t-1} (1-\alpha)^{t-j-1}  e_{j+1}}^p}\right)^{\nicefrac{1}{p}} \notag\\
    &\overset{(\ast)}{\leq}& \left(2\alpha^p \sum_{j = 0}^{t-1} (1-\alpha)^{p(t-j-1)}  \Exp{\norm{e_{j+1}}^p}\right)^{\nicefrac{1}{p}} \notag\\
    &\overset{(\ast\ast)}{\leq}& \left(2\alpha^p \sum_{j = 0}^{t-1} (1-\alpha)^{p(t-j-1)}  \sigma^p\right)^{\nicefrac{1}{p}} \notag\\
    &\overset{(\ast\ast\ast)}{\leq}& 2 \left(\alpha^{p-1} \sigma^p\right)^{\nicefrac{1}{p}} = 2\alpha^{\frac{p-1}{p}}\sigma , \label{eq:bjhdvbfuioiapa}
\end{eqnarray}
where in $(\ast)$ we used Lemma~\ref{lem:von_Bahr_inequality}, in $(\ast\ast)$ we used $\Exp{\norm{e_{j+1}}^p} \leq \sigma^p$and  in $(\ast\ast\ast)$ we used the following inequality
\begin{equation*}
    \sum_{j = 0}^{t-1} (1-\alpha)^{p(t-j-1)} \leq \sum_{j = 0}^{t-1} (1-\alpha)^{t-j-1}   \leq \sum_{j = 0}^{\infty} (1-\alpha)^{j}  = \frac{1}{\alpha}. 
\end{equation*}
We bound the third term in the same way as we did for the second term:
\begin{eqnarray}
    \Exp{\norm{ \sum_{j = 0}^{t-1} (1-\alpha)^{t-j}  \hat S_{j+1}}} &\leq& \left(\Exp{\norm{\alpha \sum_{j = 0}^{t-1} (1-\alpha)^{t-j}  \hat S_{j+1}}^p}\right)^{\nicefrac{1}{p}} \notag\\
    &\overset{(\ast)}{\leq}& \left(2 \sum_{j = 0}^{t-1} (1-\alpha)^{p(t-j)}  \Exp{\norm{\hat S_{j+1}}^p}\right)^{\nicefrac{1}{p}} \notag\\
    &\overset{(\ast\ast)}{\leq}& \left(12 \sum_{j = 0}^{t-1} (1-\alpha)^{p(t-j)} \gamma^p (L^p +\sigma^p_h)\right)^{\nicefrac{1}{p}} \notag\\
    &\overset{(\ast\ast\ast)}{\leq}& 12 \left(\alpha^{-1} \gamma^p(L^p +\sigma^p_h))\right)^{\nicefrac{1}{p}} \leq 12(L+\sigma_h)\gamma\alpha^{-\nicefrac{1}{p}} , \label{eq:hdsbhfjbshbhf}
\end{eqnarray}
where in $(\ast)$ we used Lemma~\ref{lem:von_Bahr_inequality}, in $(\ast\ast)$ we used $\Exp{\norm{\hat S_{j+1}}^p} \leq 6L^p\gamma^p + 3\sigma^p\gamma^p$, and  in $(\ast\ast\ast)$ we used the following inequality
\begin{equation*}
    \sum_{j = 0}^{t-1} (1-\alpha)^{p(t-j)} \leq \sum_{j = 0}^{t-1} (1-\alpha)^{t-j}   \leq \sum_{j = 0}^{\infty} (1-\alpha)^{j}  = \alpha^{-1}. 
\end{equation*}

Plugging \eqref{eq:bjhdvbfuioiapa} and \eqref{eq:hdsbhfjbshbhf} into \eqref{eq:dhsbhdsdsiwp}, we obtain
\begin{eqnarray*}
    \Exp{\norm{ \hat{e}_{t} }} \leq (1-\alpha)^{t} \Exp{\norm{\hat{e}_0}} + 2\alpha^{\frac{p-1}{p}}\sigma + 12\gamma(L+\sigma_h)\alpha^{-\nicefrac{1}{p}},
\end{eqnarray*}
which concludes the proof. 
\end{proof}

\textbf{Main Theorem.} Now we are ready to state and to prove the main convergence theorem for Algorithm~\ref{alg:NSGD_SOM}. 
\begin{theorem}
\label{thm:nsgdm_hess_app}
    Let Assumptions~\ref{assum:lower_bounded},~\ref{assum:l_smooth},~\ref{ass:p_BCM} and ~\ref{ass:p_BCM_hess} hold, and stepsize $\gamma = \sqrt{\frac{\Delta \alpha^{\nicefrac{1}{p}}}{ (L + \sigma_h)  T}}$, momentum parameter $\alpha = \min\left\{1,\alpha_{\text{eff}}\right\}$, where $\alpha_{\text{eff}} = \max\left\{\left(\frac{\cE_0}{T\sigma}\right)^{\frac{p}{2p-1}}, \left(\frac{\Delta(L+\sigma_h)}{T\sigma^2}\right)^{\frac{p}{2p-1}}\right\}$. Then iterates $\{x_t\}_{t=0}^{T-1}$ of Algorithm~\ref{alg:NSGD_SOM} satisfy
    \begin{equation*}
        \frac{1}{T}\sum^{T-1}_{t=0}\Exp{\norm{\nabla F(x_t)}} = \cO\left(\sqrt{\frac{\Delta (L +\sigma_h)}{T}}  +\sigma \left(\frac{\Delta(L+\sigma_h)}{T\sigma^2}\right)^{\frac{p-1}{2p-1}} + \frac{\cE_0}{ T} + \sigma\left(\frac{\cE_0}{T\sigma}\right)^{\frac{p-1}{2p-1}}\right),
    \end{equation*}
    where $\cE_0$ is defined as some  upper bound on $\EE\left[\norm{g_0 - \nabla F(x_0)}\right]$
\end{theorem}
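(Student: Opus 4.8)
The plan is to feed the momentum error bound of Lemma~\ref{lem:dev_bound} into the Descent Lemma (Lemma~\ref{lem:descent_lemma}) and then optimize the free momentum parameter $\alpha$. First I would take total expectation in Lemma~\ref{lem:descent_lemma}, discard the nonnegative term $\Exp{\Delta_T}\ge 0$, and divide by $\gamma T$ to obtain
\[
\frac{1}{T}\sum_{t=0}^{T-1}\Exp{\norm{\nabla F(x_t)}}\;\le\;\frac{\Delta_0}{\gamma T}+\frac{2}{T}\sum_{t=0}^{T-1}\Exp{\norm{\hat e_t}}+\frac{\gamma L}{2}.
\]
Then I would apply Lemma~\ref{lem:dev_bound} termwise and sum, using the geometric bound $\sum_{t\ge 0}(1-\alpha)^t\le 1/\alpha$ together with $\Exp{\norm{\hat e_0}}\le \cE_0$, which gives $\tfrac{2}{T}\sum_{t=0}^{T-1}\Exp{\norm{\hat e_t}}\le \tfrac{2\cE_0}{\alpha T}+4\sigma\alpha^{\frac{p-1}{p}}+24\gamma(L+\sigma_h)\alpha^{-1/p}$.

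Next I would substitute $\gamma=\sqrt{\Delta\alpha^{1/p}/((L+\sigma_h)T)}$. A direct computation yields $\Delta_0/(\gamma T)=\sqrt{\Delta(L+\sigma_h)/(T\alpha^{1/p})}$ and $24\gamma(L+\sigma_h)\alpha^{-1/p}=24\sqrt{\Delta(L+\sigma_h)/(T\alpha^{1/p})}$, while $\gamma L/2\le\tfrac12\sqrt{\Delta(L+\sigma_h)/T}$ using $\alpha^{1/p}\le 1$ and $L\le L+\sigma_h$. Since $\alpha\le 1$ these three ``deterministic'' contributions all collapse into $\cO\big(\sqrt{\Delta(L+\sigma_h)/(T\alpha^{1/p})}\big)$, leaving the three-term bound
\[
\frac{1}{T}\sum_{t=0}^{T-1}\Exp{\norm{\nabla F(x_t)}}\;=\;\cO\!\left(\sqrt{\frac{\Delta(L+\sigma_h)}{T\alpha^{1/p}}}+\frac{\cE_0}{\alpha T}+\sigma\alpha^{\frac{p-1}{p}}\right).
\]

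The final step is tuning $\alpha$. The first two terms are decreasing in $\alpha$ and the third is increasing; balancing the third against the first gives $\alpha_2:=(\Delta(L+\sigma_h)/(T\sigma^2))^{p/(2p-1)}$, and against the second gives $\alpha_1:=(\cE_0/(T\sigma))^{p/(2p-1)}$, which motivates the choice $\alpha=\min\{1,\max\{\alpha_1,\alpha_2\}\}$. If $\max\{\alpha_1,\alpha_2\}\ge 1$ then $\alpha=1$ and the very inequality $\alpha_{\text{eff}}\ge 1$ forces $\sigma\le\max\{\sqrt{\Delta(L+\sigma_h)/T},\,\cE_0/T\}$, so the bound reduces to the first and third claimed terms. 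Otherwise $\alpha=\max\{\alpha_1,\alpha_2\}$; since $\alpha\ge\alpha_1$ and $\alpha\ge\alpha_2$, plugging these lower bounds into the (decreasing) first two terms shows they are at most $\sigma(\Delta(L+\sigma_h)/(T\sigma^2))^{(p-1)/(2p-1)}$ and $\sigma(\cE_0/(T\sigma))^{(p-1)/(2p-1)}$ respectively, and using $\max\{\alpha_1,\alpha_2\}\le\alpha_1+\alpha_2$ with subadditivity of $t\mapsto t^{(p-1)/p}$ (valid since $(p-1)/p\le 1$) bounds $\sigma\alpha^{(p-1)/p}$ by the sum of those same two quantities. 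Collecting terms gives exactly the four-term rate.

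Steps 1--3 are routine bookkeeping; the main obstacle is the tuning argument in Step 4, where one must check that the single clipped choice $\alpha=\min\{1,\max\{\alpha_1,\alpha_2\}\}$ simultaneously dominates all three competing terms — in particular that both $\cE_0/(\alpha T)$ and $\sqrt{\Delta(L+\sigma_h)/(T\alpha^{1/p})}$ are controlled, which is precisely why the $\max$ of two candidate values (rather than a single balance point) is needed — and that the resulting expression matches the stated four terms, including the boundary behaviour at $\alpha=1$.
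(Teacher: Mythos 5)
Your proposal is correct and follows essentially the same route as the paper's proof: take expectation in the descent lemma (Lemma~\ref{lem:descent_lemma}), sum the error recursion of Lemma~\ref{lem:dev_bound} via the geometric bound $\sum_t(1-\alpha)^t\le 1/\alpha$, substitute $\gamma=\sqrt{\Delta\alpha^{1/p}/((L+\sigma_h)T)}$ to reach the three-term bound, and then set $\alpha=\min\{1,\alpha_{\text{eff}}\}$. Your final tuning step merely makes explicit the case analysis (boundary case $\alpha=1$ and the $\max\{\alpha_1,\alpha_2\}$ substitution) that the paper asserts directly when passing to its four-term rate.
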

\begin{proof}
    Applying Lemma~\ref{lem:dev_bound}, we have 
    \begin{eqnarray*}
        \sum^{T-1}_{t=0}\Exp{\norm{ \hat{e}_{t} }} &\leq& \sum^{T-1}_{t=0}(1-\alpha)^{t} \Exp{\norm{\hat{e}_0}} + 2\alpha^{\frac{p-1}{p}}\sigma T + 12\gamma(L+\sigma_h)\alpha^{-\nicefrac{1}{p}}T\\
        &\leq& \frac{\Exp{\norm{\hat{e}_0}}}{\alpha} +  2\alpha^{\frac{p-1}{p}}\sigma T + 12\gamma(L+\sigma_h)\alpha^{-\nicefrac{1}{p}}T.
    \end{eqnarray*}
    Therefore, according to Lemma~\ref{lem:descent_lemma}, we obtain 
    \begin{eqnarray*}
        \frac{1}{T}\sum^{T-1}_{t=0}\EE\left[\|\nabla F(x_t)\|\right] &\leq& \frac{\Delta}{\gamma T} +\frac2T \sum^{T-1}_{t=0}\EE\left[\|\hat{e}_t\|\right] + \frac{\gamma L}{2}\\
        &\leq&  \frac{\Delta}{\gamma T}+ \frac{\gamma L}{2}  + \frac{2\Exp{\norm{\hat{e}_0}}}{\alpha T} +  4\alpha^{\frac{p-1}{p}}\sigma  + 24\gamma(L+\sigma_h)\alpha^{-\nicefrac{1}{p}}\\
        &\leq&  \frac{\Delta}{\gamma T}  + \frac{2\Exp{\norm{\hat{e}_0}}}{\alpha T} +  4\alpha^{\frac{p-1}{p}}\sigma  + 25\gamma(L+\sigma_h)\alpha^{-\nicefrac{1}{p}}\\
        &\leq& \cO\left(\sqrt{\frac{\Delta (L +\sigma_h)}{\alpha^{\nicefrac{1}{p}}T}} + \frac{\Exp{\norm{\hat{e}_0}}}{\alpha T} +\alpha^{\frac{p-1}{p}}\sigma\right),
    \end{eqnarray*}
    where in the last inequality we took $\gamma = \sqrt{\frac{\Delta \alpha^{\nicefrac{1}{p}}}{ (L + \sigma_h)  T}}$. Denoting $\EE\left[\norm{\hat{e}_0}\right] \leq \cE_0$ and taking momentum parameter $\alpha = \min\left\{1,\alpha_{\text{eff}}\right\}$, where $\alpha_{\text{eff}} = \max\left\{\left(\frac{\cE_0}{T\sigma}\right)^{\frac{p}{2p-1}}, \left(\frac{\Delta(L+\sigma_h)}{T\sigma^2}\right)^{\frac{p}{2p-1}}\right\}$, we have 
    \begin{eqnarray*}
        \frac{1}{T}\sum^{T-1}_{t=0}\EE\left[\|\nabla F(x_t)\|\right] &=& \cO\left(\sqrt{\frac{\Delta (L +\sigma_h)}{\alpha^{\nicefrac{1}{p}}T}} + \frac{\cE_0}{\alpha T} +\alpha^{\frac{p-1}{p}}\sigma\right)\\
        &=& \cO\left(\sqrt{\frac{\Delta (L +\sigma_h)}{T}} + \frac{\cE_0}{ T} +\sigma \left(\frac{\Delta(L+\sigma_h)}{T\sigma^2}\right)^{\frac{p-1}{2p-1}} + \sigma\left(\frac{\cE_0}{T\sigma}\right)^{\frac{p-1}{2p-1}}\right).
    \end{eqnarray*}
\end{proof}

Now we investigate how different choices of initial estimator $g_0$ affect the total sample complexity bound.
\begin{corollary}
Let all assumptions of Theorem~\ref{thm:nsgdm_hess_app} hold and the step-size and momentum parameters are set according to this theorem statement.
\begin{enumerate}
    \item If we set $g_0 =\nabla F(x_0)$, then $\cE_0 = 0$, and the total sample complexity of Algorithm~\ref{alg:NSGD_SOM} is equal to 
    \begin{eqnarray*}
        \cO \left(\frac{\Delta (\sm + \sigma_h)}{\varepsilon^2} + \frac{\Delta (\sm + \sigma_h)}{\varepsilon^2} \left(\frac{\sigma}{\varepsilon}\right)^{\frac{1}{p-1}} \right).
    \end{eqnarray*}
    \item If we set $g_0 =0$, then $\cE_0 = \sqrt{2\Delta L}$, and the total sample complexity of Algorithm~\ref{alg:NSGD_SOM} is equal to 
    \begin{equation*}
        \cO \left(\frac{\Delta (\sm + \sigma_h)}{\varepsilon^2} + \frac{\Delta (\sm + \sigma_h)}{\varepsilon^2} \left(\frac{\sigma}{\varepsilon}\right)^{\frac{1}{p-1}} + \frac{\sqrt{\Delta L} \sigma}{\varepsilon^2}\left(\frac{\sigma}{\varepsilon}\right)^{\frac{1}{p-1}}\right).
    \end{equation*}
    \item If we set $g_0 = \frac{1}{B_{\text{init}}}\sum^{B_{\text{init}}}_{j = 1}\nabla f(x_0, \xi_{0, j})$ with $B_{\text{init}} = \max\left\{1, \left(\frac{\sigma}{\varepsilon}\right)^{\frac{p}{p-1}}, \left(\frac{\sigma}{\varepsilon}\right)^{\frac{p}{2p-1}}\right\}$, then $\cE_0 =2\sigma/ {B_{\text{init}}^{\frac{p-1}{p}}}$, and the total sample complexity of Algorithm~\ref{alg:NSGD_SOM} is equal to
    \begin{equation*}
        \cO\left(\frac{\Delta (L +\sigma_h)}{\varepsilon^2} + \frac{\Delta (\sm + \sigma_h)}{\varepsilon^2} \left(\frac{\sigma}{\varepsilon}\right)^{\frac{1}{p-1}} + \frac{\sigma}{\varepsilon}\left(\frac{\sigma}{\varepsilon}\right)^{\frac{1}{p-1}}  + \left(\frac{\sigma}{\varepsilon}\right)^{\frac{p}{2p-1}}\right).
    \end{equation*}
\end{enumerate}
\end{corollary}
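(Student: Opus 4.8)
The plan is to obtain all three complexity bounds directly from Theorem~\ref{thm:nsgdm_hess_app} in two moves: first bound the quantity $\cE_0$, an upper bound on $\EE[\norm{g_0-\nabla F(x_0)}]$, for each choice of $g_0$; then substitute that bound into the rate of Theorem~\ref{thm:nsgdm_hess_app} and invert it term by term to count oracle queries. Recall that each iteration of Algorithm~\ref{alg:NSGD_SOM} issues exactly one stochastic gradient query and one stochastic Hessian-vector product query, so the total number of first/second-order oracle calls is $\Theta(T)$ plus the $B_{\text{init}}$ gradient samples used to build $g_0$; it thus suffices to determine the iteration count $T$ needed for $\tfrac1T\sum_t\EE[\norm{\nabla F(x_t)}]\le\varepsilon$ and add $B_{\text{init}}$.

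\emph{Bounding $\cE_0$.} For $g_0=\nabla F(x_0)$ one has $\cE_0=0$. For $g_0=0$, $\norm{g_0-\nabla F(x_0)}=\norm{\nabla F(x_0)}$, and combining Assumption~\ref{assum:l_smooth} with Assumption~\ref{assum:lower_bounded} via the standard inequality $\norm{\nabla F(x_0)}^2\le 2L(F(x_0)-F_*)=2L\Delta$ gives $\cE_0=\sqrt{2\Delta L}$. For the mini-batch choice, the vectors $\nabla f(x_0,\xi_{0,j})-\nabla F(x_0)$ are i.i.d., mean zero, and have $p$-th moment at most $\sigma^p$ by Assumption~\ref{ass:p_BCM}, so Lemma~\ref{lem:von_Bahr_inequality} yields $\EE[\norm{g_0-\nabla F(x_0)}^p]\le 2\sigma^p/B_{\text{init}}^{p-1}$, and Jensen's inequality turns this into $\cE_0\le 2^{1/p}\sigma/B_{\text{init}}^{(p-1)/p}\le 2\sigma/B_{\text{init}}^{(p-1)/p}$, matching the stated value.

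\emph{Inverting the rate.} The rate in Theorem~\ref{thm:nsgdm_hess_app} is a sum of four terms; requiring each of them to be at most $\varepsilon$ shows that $\tfrac1T\sum_t\EE[\norm{\nabla F(x_t)}]\le\varepsilon$ as soon as $T\gtrsim \tfrac{\Delta(L+\sigma_h)}{\varepsilon^2}$ and $T\gtrsim \tfrac{\Delta(L+\sigma_h)}{\varepsilon^2}(\sigma/\varepsilon)^{1/(p-1)}$ (from the two $\cE_0$-free terms, using $(2p-1)/(p-1)=2+1/(p-1)$), together with $T\gtrsim \cE_0/\varepsilon$ and $T\gtrsim \tfrac{\cE_0}{\sigma}(\sigma/\varepsilon)^{(2p-1)/(p-1)}$ (from the two $\cE_0$-dependent terms). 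Adding the up-front cost, the sample complexity is $\cO\big(B_{\text{init}}+\tfrac{\Delta(L+\sigma_h)}{\varepsilon^2}(1+(\sigma/\varepsilon)^{1/(p-1)})+\tfrac{\cE_0}{\varepsilon}+\tfrac{\cE_0}{\sigma}(\sigma/\varepsilon)^{(2p-1)/(p-1)}\big)$. Cases~1 and~2 follow immediately: plug in $\cE_0=0$, respectively $\cE_0=\sqrt{2\Delta L}$, and note that the leftover term $\tfrac{\cE_0}{\varepsilon}=\tfrac{\sqrt{2\Delta L}}{\varepsilon}$ is dominated by $\tfrac{\Delta(L+\sigma_h)}{\varepsilon^2}$ in the regime $\varepsilon\lesssim\sqrt{\Delta L}$, so only $\tfrac{\cE_0}{\sigma}(\sigma/\varepsilon)^{(2p-1)/(p-1)}=\tfrac{\sqrt{\Delta L}\,\sigma}{\varepsilon^2}(\sigma/\varepsilon)^{1/(p-1)}$ survives.

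\emph{Choosing $B_{\text{init}}$ (the main, mild, obstacle).} For case~3, substitute $\cE_0=2\sigma/B_{\text{init}}^{(p-1)/p}$. Then $\tfrac{\cE_0}{\varepsilon}=2(\sigma/\varepsilon)B_{\text{init}}^{-(p-1)/p}$, which is $\le 2(\sigma/\varepsilon)^{p/(2p-1)}$ exactly when $B_{\text{init}}\ge(\sigma/\varepsilon)^{p/(2p-1)}$, and $\tfrac{\cE_0}{\sigma}(\sigma/\varepsilon)^{(2p-1)/(p-1)}=2B_{\text{init}}^{-(p-1)/p}(\sigma/\varepsilon)^{(2p-1)/(p-1)}$, which is $\le 2(\sigma/\varepsilon)^{p/(p-1)}$ exactly when $B_{\text{init}}\ge(\sigma/\varepsilon)^{p/(p-1)}$; these are the two ``balance points'' $B_{\text{init}}^{(2p-1)/p}=\sigma/\varepsilon$ and $B_{\text{init}}^{(2p-1)/p}=(\sigma/\varepsilon)^{(2p-1)/(p-1)}$ equating the up-front cost with each error-reduction requirement it controls. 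The choice $B_{\text{init}}=\max\{1,(\sigma/\varepsilon)^{p/(p-1)},(\sigma/\varepsilon)^{p/(2p-1)}\}$ satisfies both, so those two terms are each absorbed into $B_{\text{init}}\le 1+\tfrac{\sigma}{\varepsilon}(\sigma/\varepsilon)^{1/(p-1)}+(\sigma/\varepsilon)^{p/(2p-1)}$, and absorbing the ``$1$'' into $\tfrac{\Delta(L+\sigma_h)}{\varepsilon^2}$ gives exactly the claimed bound. I do not expect any genuine difficulty beyond recognizing this balancing and being careful with the exponents and with which $\cO$-term dominates in which regime.
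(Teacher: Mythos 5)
Your proposal is correct and follows essentially the same route as the paper: bound $\cE_0$ in each case (via $\|\nabla F(x_0)\|\le\sqrt{2L\Delta}$ for $g_0=0$, and Lemma~\ref{lem:von_Bahr_inequality} plus Jensen for the mini-batch choice), substitute into Theorem~\ref{thm:nsgdm_hess_app}, invert term by term, and choose $B_{\text{init}}$ to balance the two $\cE_0$-dependent constraints against the up-front cost. The only cosmetic difference is where the dominated term is discarded in case 2 (you drop $\sqrt{\Delta L}/\varepsilon$ at the complexity level, the paper drops $\sqrt{\Delta L}/T$ at the rate level); both are valid since in the regime $\varepsilon\gtrsim\sqrt{\Delta L}$ that constraint is $\cO(1)$ anyway.
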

\begin{proof}
    The first case is ideal, when we can have access to the full gradient but once: set $g_0 = \nabla F(x_0)$, then $\hat{e}_0 = 0$ and $\cE_0 = 0$. Thus we have 
    \begin{eqnarray*}
        \frac{1}{T}\sum^{T-1}_{t=0}\EE\left[\|\nabla F(x_t)\|\right] &=& \cO\left(\sqrt{\frac{\Delta (L +\sigma_h)}{T}} + \sigma \left(\frac{\Delta(L+\sigma_h)}{T\sigma^2}\right)^{\frac{p-1}{2p-1}} \right).
    \end{eqnarray*}
    In other words, we can guarantee $\frac{1}{T}\sum^{T-1}_{t=0}\EE\left[\|\nabla F(x_t)\|\right] \leq \varepsilon$ after 
    \begin{equation*}
        \cO \left(\frac{\Delta (\sm + \sigma_h)}{\varepsilon^2} + \frac{\Delta (\sm + \sigma_h)}{\varepsilon^2} \left(\frac{\sigma}{\varepsilon}\right)^{\frac{1}{p-1}} \right).
    \end{equation*}
    iterations of Algorithm~\ref{alg:NSGD_SOM}. 

    The second case is that we select $g_0 $ as a zero vector. This choice implies that by smoothness of $F$, we have 
    \begin{equation*}
        \EE\left[\norm{\hat{e}_0}\right] = \norm{\nabla F(x_0)} \leq \sqrt{2L\Delta} = \cE_0.
    \end{equation*}
    Plugging the obtained value of $\cE_0$ into the result of Theorem~\ref{thm:nsgdm_hess}, we have 
    \begin{eqnarray*}
        \frac{1}{T}\sum^{T-1}_{t=0}\EE\left[\|\nabla F(x_t)\|\right] &=& \cO\left(\sqrt{\frac{\Delta (L +\sigma_h)}{T}} + \sigma \left(\frac{\Delta(L+\sigma_h)}{T\sigma^2}\right)^{\frac{p-1}{2p-1}}  + \frac{\sqrt{\Delta L}}{T} + \sigma \left(\frac{\sqrt{\Delta L}}{T\sigma}\right)^{\frac{p-1}{2p-1}}\right)\\
        &=& \cO\left(\sqrt{\frac{\Delta (L +\sigma_h)}{T}} + \sigma \left(\frac{\Delta(L+\sigma_h)}{T\sigma^2}\right)^{\frac{p-1}{2p-1}}   + \sigma \left(\frac{\sqrt{\Delta L}}{T\sigma}\right)^{\frac{p-1}{2p-1}}\right)
    \end{eqnarray*}
    which implies that the total sample complexity is 
    \begin{equation*}
        \cO \left(\frac{\Delta (\sm + \sigma_h)}{\varepsilon^2} + \frac{\Delta (\sm + \sigma_h)}{\varepsilon^2} \left(\frac{\sigma}{\varepsilon}\right)^{\frac{1}{p-1}} + \frac{\sqrt{\Delta L} \sigma}{\varepsilon^2}\left(\frac{\sigma}{\varepsilon}\right)^{\frac{1}{p-1}}\right).
    \end{equation*}

    The third case is that we set $g_0 = \frac{1}{B_{\text{init}}}\sum^{B_{\text{init}}}_{j = 1}\nabla f(x_0, \xi_{0, j})$. By Lemma~\ref{lem:von_Bahr_inequality}, we have 
    \begin{eqnarray*}
        \Exp{\norm{\hat{e}_0}} &\leq& \left(\Exp{\norm{\hat{e}_0}^p}\right)^{\nicefrac{1}{p}} = \left(\Exp{\norm{g_0 -\nabla F(x_0)}^p}\right)^{\nicefrac{1}{p}}\\
        &\leq& \frac{2}{B_{\text{init}}} \left(\sum^{B_{\text{init}}}_{j=0}\Exp{\norm{\nabla f(x_0,\xi_{0,j}) -\nabla F(x_0)}^p}\right)^{\nicefrac{1}{p}}\\
        &\leq&  \frac{2}{B_{\text{init}}} \left(\sum^{B_{\text{init}}}_{j=0} \sigma^p\right)^{\nicefrac{1}{p}} = \frac{2\sigma}{B_{\text{init}}^{\frac{p-1}{p}}} = \cE_0.
    \end{eqnarray*}
    Plugging the obtained value of $\cE_0$ into the result of Theorem~\ref{thm:nsgdm_hess}, we have 
    \begin{eqnarray*}
        \frac{1}{T}\sum^{T-1}_{t=0}\EE\left[\|\nabla F(x_t)\|\right] &=& \cO\left(\sqrt{\frac{\Delta (L +\sigma_h)}{T}} + \sigma \left(\frac{\Delta(L+\sigma_h)}{T\sigma^2}\right)^{\frac{p-1}{2p-1}}  + \frac{\sigma}{T B_{\text{init}}^{\frac{p-1}{p}}} + \sigma \left(\frac{1}{T B_{\text{init}}^{\frac{p-1}{p}}}\right)^{\frac{p-1}{2p-1}}\right),
    \end{eqnarray*}
    from which we have that the total sample complexity is 
    \begin{eqnarray*}
         \cO\left(\frac{\Delta (L +\sigma_h)}{\varepsilon^2} + \frac{\Delta (\sm + \sigma_h)}{\varepsilon^2} \left(\frac{\sigma}{\varepsilon}\right)^{\frac{1}{p-1}} + \frac{\sigma^2}{B_{\text{init}}^{\frac{p-1}{p}}\varepsilon^2}\left(\frac{\sigma}{\varepsilon}\right)^{\frac{1}{p-1}}  + \frac{\sigma}{\varepsilon B_{\text{init}}^{\frac{p-1}{p}}}  + B_{\text{init}}\right).
    \end{eqnarray*}
    Taking $B_{\text{init}} = \max\left\{1, \left(\frac{\sigma}{\varepsilon}\right)^{\frac{p}{p-1}}, \left(\frac{\sigma}{\varepsilon}\right)^{\frac{p}{2p-1}}\right\}$, we have 
    that the total sample complexity is 
    \begin{eqnarray*}
         \cO\left(\frac{\Delta (L +\sigma_h)}{\varepsilon^2} + \frac{\Delta (\sm + \sigma_h)}{\varepsilon^2} \left(\frac{\sigma}{\varepsilon}\right)^{\frac{1}{p-1}} + \frac{\sigma}{\varepsilon}\left(\frac{\sigma}{\varepsilon}\right)^{\frac{1}{p-1}}  + \left(\frac{\sigma}{\varepsilon}\right)^{\frac{p}{2p-1}}\right).
    \end{eqnarray*}
\end{proof}

\newpage
\section{Missing Proofs for Section~\ref{sec:high_probability_analysis}}
\label{app:missing_proofs_hp}

Now we start the high-probability convergence analysis for Algorithm~\ref{alg:NSGD_SOM_clipped}. We divide our analysis into two parts: Optimization part, where we prove descent lemma, and High-Probability part, where we use concentration inequality to bound several terms from Descent Lemma. Finally, combining results from both parts, we prove the main results of this section via induction. The idea is based on work of \cite{sadiev2023high} and \cite{UnboundedClippedNSGDM2023Liu}.

\subsection{Analysis: Optimization Part}
 We start with \textit{Descent Lemma}. 

\begin{lemma}
\label{lem:descent_lemma_clipped_sgd}
    Let Assumptions~\ref{assum:lower_bounded},~\ref{assum:l_smooth} hold. Then Algorithm~\ref{alg:NSGD_SOM_clipped} with stepsize $\gamma > 0$ and momentum parameter $\alpha \in (0,1)$  generates iterates $\left\{x_t\right\}_{t=0}^{T}$ satisfying  the following inequality
    \begin{eqnarray*}
        \gamma \sum^{T-1}_{t=0}\|\nabla F(x_t)\| + \Delta_{T} &\leq& \Delta_1 + \frac{\gamma^2 L T}{2} + \frac{3\gamma}{\alpha} \sqrt{L\Delta_1}\\
        && +  2\gamma\alpha \sum^{T-1}_{t=1}\left\|\sum^t_{j=1}(1-\alpha)^{t-j}\theta_j\right\| + 2\gamma\sum^{T-1}_{t=1}\left\|\sum^t_{j=1}(1-\alpha)^{t-j+1}\omega_j\right\|,
    \end{eqnarray*}
    for any $j \in [T-1]$ vectors $\theta_j$ and $\omega_j$ are defined as follows
\begin{eqnarray}
    \theta_j &\eqdef&  \texttt{clip}\left(\nabla f(x_j, \xi_j), \lambda\right) - \nabla F(x_j), \label{eq:definiton_theta}\\
    \omega_j &\eqdef& \texttt{clip}\left(\nabla^2 f(\hat{x}_t, \hat{\xi}_t)(x_t - x_{t-1}), \lambda_h \right) - \left(\nabla F(x_j) - \nabla F(x_{j-1})\right). \label{eq:definition_omega}
\end{eqnarray}
\end{lemma}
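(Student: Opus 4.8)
The plan is to prove this as a purely deterministic, pathwise manipulation, mirroring the structure of Lemma~\ref{lem:descent_lemma} but keeping track of the clipped objects. First I would establish the one-step descent bound. For $t\ge 1$, $L$-smoothness together with the update $x_{t+1}=x_t-\gamma g_t/\norm{g_t}$ gives
\[
F(x_{t+1})\le F(x_t)-\gamma\norm{g_t}-\gamma\Big\langle \nabla F(x_t)-g_t,\tfrac{g_t}{\norm{g_t}}\Big\rangle+\tfrac{\gamma^2 L}{2}\le F(x_t)-\gamma\norm{\nabla F(x_t)}+2\gamma\norm{\hat e_t}+\tfrac{\gamma^2 L}{2},
\]
where $\hat e_t:=g_t-\nabla F(x_t)$, using Cauchy--Schwarz on the inner product and the reverse triangle inequality $\norm{g_t}\ge\norm{\nabla F(x_t)}-\norm{\hat e_t}$ (with the convention $0/\norm{0}=0$, under which the bound is immediate when $g_t=0$). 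Summing over $t=1,\dots,T-1$ and writing $\Delta_t=F(x_t)-F_*$ yields $\gamma\sum_{t=1}^{T-1}\norm{\nabla F(x_t)}+\Delta_T\le\Delta_1+2\gamma\sum_{t=1}^{T-1}\norm{\hat e_t}+\tfrac{\gamma^2 L(T-1)}{2}$.

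Next I would turn the momentum update into a recursion for $\hat e_t$. Using the identity $\gamma\,\texttt{clip}(\gamma^{-1}v,\bar\lambda_h)=\texttt{clip}(v,\lambda_h)$ with $\lambda_h=\gamma\bar\lambda_h$, line~7 of Algorithm~\ref{alg:NSGD_SOM_clipped} reads $g_t=(1-\alpha)\big(g_{t-1}+\texttt{clip}(\nabla^2 f(\hat x_t,\hat\xi_t)(x_t-x_{t-1}),\lambda_h)\big)+\alpha\,\texttt{clip}(\nabla f(x_t,\xi_t),\lambda)$. Subtracting the decomposition $\nabla F(x_t)=(1-\alpha)\nabla F(x_{t-1})+(1-\alpha)\big(\nabla F(x_t)-\nabla F(x_{t-1})\big)+\alpha\nabla F(x_t)$ and recalling \eqref{eq:definiton_theta}--\eqref{eq:definition_omega} gives, for $t\ge1$, $\hat e_t=(1-\alpha)\hat e_{t-1}+(1-\alpha)\omega_t+\alpha\theta_t$, with $\hat e_0:=g_0-\nabla F(x_0)=-\nabla F(x_0)$ since $g_0=0$; the initialization $x_1=x_0$ makes this consistent at $t=1$ (where $\omega_1=0$). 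Unrolling the recursion,
\[
\hat e_t=(1-\alpha)^t\hat e_0+\sum_{j=1}^{t}(1-\alpha)^{t-j+1}\omega_j+\alpha\sum_{j=1}^{t}(1-\alpha)^{t-j}\theta_j .
\]

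Then I would bound $2\gamma\sum_{t=1}^{T-1}\norm{\hat e_t}$ by the triangle inequality. The $\hat e_0$ contribution is at most $2\gamma\norm{\nabla F(x_0)}\sum_{t\ge1}(1-\alpha)^t\le\tfrac{2\gamma(1-\alpha)}{\alpha}\norm{\nabla F(x_0)}$, while the remaining two sums are exactly the $\omega$- and $\theta$-terms in the claimed inequality. To move from $\sum_{t=1}^{T-1}$ to $\sum_{t=0}^{T-1}$ on the left I add $\gamma\norm{\nabla F(x_0)}$ (note $x_0=x_1$). Finally, Assumptions~\ref{assum:lower_bounded}--\ref{assum:l_smooth} give $\norm{\nabla F(x_0)}^2\le 2L\Delta_1$, so the accumulated deterministic term is $\gamma\norm{\nabla F(x_0)}\big(1+\tfrac{2(1-\alpha)}{\alpha}\big)=\tfrac{(2-\alpha)\gamma}{\alpha}\norm{\nabla F(x_0)}\le\tfrac{\sqrt2(2-\alpha)\gamma}{\alpha}\sqrt{L\Delta_1}\le\tfrac{3\gamma}{\alpha}\sqrt{L\Delta_1}$ (using $\alpha\le1$, so $\sqrt2(2-\alpha)<3$), and $\tfrac{\gamma^2 L(T-1)}{2}\le\tfrac{\gamma^2 LT}{2}$, which assembles to the stated bound.

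I do not expect a deep obstacle: the statement is deterministic and the stochastic content ($\theta_j,\omega_j$ being martingale-difference-like) is deferred to the high-probability part. The one step needing care is the constant bookkeeping of the additive $\tfrac{3\gamma}{\alpha}\sqrt{L\Delta_1}$ term — correctly absorbing both the index shift to include $t=0$ and the geometric $\hat e_0$ tail into this single quantity — together with the recognition that the Hessian-vector term must be split as a gradient finite difference so that $\omega_j$ is centered along the trajectory; this relies precisely on the product structure $x_t-x_{t-1}$ and on clipping commuting with scalar multiplication, which is the reason the Hessian clipping was written in the form \eqref{eq:hessian_clipping_term}.
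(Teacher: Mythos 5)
Your proposal is correct and follows essentially the same route as the paper: the normalized-step descent inequality, the recursion $\hat e_t=(1-\alpha)\hat e_{t-1}+(1-\alpha)\omega_t+\alpha\theta_t$ unrolled with $\|\hat e_0\|=\|\nabla F(x_0)\|\le\sqrt{2L\Delta_1}$, and a geometric-series bound absorbed into the $\tfrac{3\gamma}{\alpha}\sqrt{L\Delta_1}$ term. The only difference is bookkeeping: the paper keeps the $t=0$ term inside the sums using $\Delta_0=\Delta_1$ (giving the factor $2\sqrt2\le 3$), whereas you add $\gamma\|\nabla F(x_0)\|$ separately and verify $\sqrt2(2-\alpha)\le 3$ — both yield the stated constants.
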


 \begin{proof}
First, we notice that Lemma~\ref{lem:descent_lemma} holds for Algorithm~\ref{alg:NSGD_SOM_clipped} and can be proven in the same way as for Algorithm~\ref{alg:NSGD_SOM}, since the gradient updates in the both methods are identical except for $x_1$,  and the update rule for momentum term does not play any role in the proof of Lemma~\ref{lem:descent_lemma}. Therefore, the iterates $\left\{x_t\right\}_{t=0}^{T}$ of Algorithm~\ref{alg:NSGD_SOM_clipped} satisfy
\begin{equation}
\label{eq:descent_lemma_2}
    \gamma \sum^{T-1}_{t=0}\|\nabla F(x_t)\| + \Delta_{T} \leq \Delta_1 +2\gamma \sum^{T-1}_{t=0}\|\hat{e}_t\| + \frac{\gamma^2 L T}{2},
\end{equation}
where $\Delta_0 = \Delta_1 $, because $x_0 = x_1$. Next we bound $\|\hat{e}_t\|$ in almost the same way as we did in Lemma~\ref{lem:dev_bound}. By update rule for momentum parameter, we have 
\begin{eqnarray*}
    \hat e_t &=& g_t - \nabla F(x_t)\\
    &=&  (1-\alpha)\left( g_{t-1} + \texttt{clip}\left(\nabla^2 f(\hat{x}_t, \hat{\xi}_t)(x_t - x_{t-1}), \lambda_h \right)\right)\\
    &&\quad  + \alpha \texttt{clip}\left(\nabla f(x_t, \xi_t), \lambda\right) - \nabla F(x_t)\\
    &\overset{\eqref{eq:definiton_theta}, \eqref{eq:definition_omega}}{=}& (1-\alpha)\hat e_{t-1} + \alpha \theta_t +(1-\alpha)\omega_t\\
    &=& (1-\alpha)^t \hat e_0 + \alpha\sum^{t}_{j=1}(1-\alpha)^{t-j}\theta_j+ \sum^t_{j=1}(1-\alpha)^{t-j+1}\omega_j.
\end{eqnarray*}
Taking the norm, we obtain the following bound 
\begin{eqnarray}
    \|\hat{e}_t\| &\leq& (1-\alpha)^t\|\hat{e}_0\| + \alpha \left\|\sum^t_{j=1}(1-\alpha)^{t-j}\theta_j\right\| + \left\|\sum^t_{j=1}(1-\alpha)^{t-j+1}\omega_j\right\| \notag\\
    &\leq&  (1-\alpha)^t \sqrt{2L\Delta_1} + \alpha \left\|\sum^t_{j=1}(1-\alpha)^{t-j}\theta_j\right\| + \left\|\sum^t_{j=1}(1-\alpha)^{t-j+1}\omega_j\right\|, \label{eq:error_norm_bound}
\end{eqnarray}
where in the last inequality we used the following chain of inequalities 
\begin{equation*}
    \|\hat{e}_0\| =  \|g_0 - \nabla F(x_0)\| =  \| \nabla F(x_0)\| =  \|\nabla F(x_1)\| \leq \sqrt{2L (F(x_1) - F_*)} \leq \sqrt{2L\Delta_1}. 
\end{equation*}
    Plugging \eqref{eq:error_norm_bound} into \eqref{eq:descent_lemma_2}, we acquire
    \begin{eqnarray*}
        \gamma \sum^{T-1}_{t=0}\|\nabla F(x_t)\| + \Delta_{T} &\leq& \Delta_1 +2\gamma \sum^{T-1}_{t=0}\|\hat{e}_t\| + \frac{\gamma^2 L T}{2}\\
    &\leq&  \Delta_1 + \frac{\gamma^2 L T}{2} + 2\gamma \sum^{T-1}_{t=0}(1-\alpha)^t \sqrt{2L\Delta_1}\\
    && +  2\gamma\alpha \sum^{T-1}_{t=1}\left\|\sum^t_{j=1}(1-\alpha)^{t-j}\theta_j\right\| + 2\gamma\sum^{T-1}_{t=1}\left\|\sum^t_{j=1}(1-\alpha)^{t-j+1}\omega_j\right\|\\
    &\leq& \Delta_1 + \frac{\gamma^2 L T}{2} + \frac{3\gamma}{\alpha} \sqrt{L\Delta_1}\\
    && +  2\gamma\alpha \sum^{T-1}_{t=1}\left\|\sum^t_{j=1}(1-\alpha)^{t-j}\theta_j\right\| + 2\gamma\sum^{T-1}_{t=1}\left\|\sum^t_{j=1}(1-\alpha)^{t-j+1}\omega_j\right\|,
\end{eqnarray*}
where in the last inequality we used 
\begin{equation*}
    2\gamma \sum^{T-1}_{t=0}(1-\alpha)^t \sqrt{2L\Delta_1} \leq  2\gamma \sum^{\infty}_{t=0}(1-\alpha)^t \sqrt{2L\Delta_1} \leq \frac{2\sqrt{2}\gamma}{1-(1-\alpha)}\sqrt{L\Delta_1} \leq \frac{3\gamma}{\alpha}\sqrt{L\Delta_1}.
\end{equation*}
\end{proof}

\subsection{Analysis: Statistical Part}

According to Lemma~\ref{lem:descent_lemma_clipped_sgd}, we have two new terms
\begin{equation*}
    \left\|\sum^t_{j=1}(1-\alpha)^{t-j}\theta_j\right\| \quad \text{and}\quad  \left\|\sum^t_{j=1}(1-\alpha)^{t-j+1}\omega_j\right\|.
\end{equation*}
To bound both of them, we use the same idea as in the work of \cite{gorbunov2020stochastic, sadiev2023high, UnboundedClippedNSGDM2023Liu}: introduce unbiased and biased parts of $\theta_t$ and $\omega_t$, i.e. for any $t \in [T]$
\begin{eqnarray}
    \theta_t &=& \theta_t^u+\theta_t^b,\quad \text{where} \notag\\
    \theta^u_t &\eqdef& \texttt{clip}\left(\nabla f(x_t, \xi_t), \lambda\right) - \EE_{\xi_t}\left[\texttt{clip}\left(\nabla f(x_t, \xi_t), \lambda\right)\right], \label{eq:theta_unbiased}\\
    \theta^b_t &\eqdef& \EE_{\xi_t}\left[\texttt{clip}\left(\nabla f(x_t, \xi_t), \lambda\right)\right] - \nabla F(x_t); \label{eq:theta_biased}\\
    \omega_t &=& \omega^u_t +\omega_t^b,\quad \text{where} \notag\\
    \omega^u_t &\eqdef& \texttt{clip}\left(\nabla^2 f(\hat{x}_t, \hat{\xi}_t)(x_t - x_{t-1}), \lambda_h \right) - \EE_{q_t,\xi_t}\left[\texttt{clip}\left(\nabla^2 f(\hat{x}_t, \hat{\xi}_t)(x_t - x_{t-1}), \lambda_h \right)\right], \label{eq:omega_unbiased}\\
    \omega^b_t &\eqdef& \EE_{q_t,\xi_t}\left[\texttt{clip}\left(\nabla^2 f(\hat{x}_t, \hat{\xi}_t)(x_t - x_{t-1}), \lambda_h \right)\right] - \left(\nabla F(x_j) - \nabla F(x_{j-1})\right). \label{eq:omega_biased}
\end{eqnarray}
Under Assumption~\ref{ass:p_BCM} for any $\lambda \geq 2\|\nabla F(x_t)\|$ Lemma~\ref{lem:bound_variance_magnitude} implies 
\begin{equation}
\label{eq:theta_u_theta_b_variance_and_magnitude}
    \EE_{\xi_t}\left[\norm{\theta^u_j}^2\right] \leq 18\lambda^{2-p}\sigma^p\quad \text{and}\quad \norm{\theta^b_t} \leq 2^p\lambda^{1-p}\sigma^p,\quad \text{for all } t \in [T].
\end{equation}
It is worth to mention we have already shown in Lemma~\ref{lem:descent_lemma} that under Assumptions~\ref{assum:l_smooth} and~\ref{ass:p_BCM_hess} the vector $ \hat S_{t} =  \nabla^2 f(\hat x_{t}, \hat \xi_{t}) \cdot (x_{t} - x_{t-1})  - \nabla F(x_{t}) + \nabla F(x_{t-1})$ has zero expectation $\EE_{q_t, \xi_t}\left[\hat S_t\right] = 0$, since 
\begin{eqnarray*}
    \EE_{q_t, \hat \xi_t}\left[\nabla^2 f(\hat{x}_t, \hat{\xi}_t)(x_t - x_{t-1})\right] &=& \EE_{ q_t}\left[\EE_{\hat{\xi}_t}\left[\nabla^2 f(\hat{x}_t, \hat{\xi}_t)\right](x_t - x_{t-1})\right]\\
    &=& \EE_{ q_t}\left[\nabla^2 F(\hat{x}_t)(x_t - x_{t-1})\right]\\
    &=&\int^1_0\nabla^2 F(qx_t+(1-q)x_{t-1})(x_t-x_{t-1}) dq\\
    &=& \nabla F(x_t) - \nabla F(x_{t-1}).
\end{eqnarray*}
The $p$-central moment of $\hat S_t$ is bounded, i.e. $\EE_{q_t,\xi_t}\left[\norm{\hat S_t}^p\right] \leq  6 L^p \gamma^p + 3 \sigma_h^p \gamma^p $. Then, according to Lemma~\ref{lem:bound_variance_magnitude}, for all $t \in [T]$ we have 
\begin{equation}
\label{eq:omega_u_omega_b_variance_and_magnitude}
    \EE_{q_t,\xi_t}\left[\norm{\omega^u_j}^2\right] \leq 18\lambda^{2-p}_h\gamma^p(6 L^p  + 3 \sigma_h^p)\quad \text{and}\quad \norm{\omega^b_t} \leq 2^p\lambda^{1-p}_h\gamma^p (6 L^p + 3 \sigma_h^p),
\end{equation}
for any $2\norm{\nabla F(x_{t}) - \nabla F(x_{t-1})} \leq 2L\gamma \leq \lambda_h $.

\vspace{0.5cm}

\begin{lemma}
\label{lem:technical_lemma_hp_1}
    Let Assumption~\ref{ass:p_BCM} hold. For any $\delta' \in (0,\nicefrac{1}{2}] $ and any $t \in [T]$, if clipping level satisfy 
    $$\lambda \geq \max\left\{2\|\nabla F(x_j)\|, \frac{\sigma}{\alpha^{\nicefrac{1}{p}}}\right\}, $$
    then with probability at least $1-2\delta'$ 
    \begin{eqnarray*}
        \left\|\sum\limits^t_{j=1}(1-\alpha)^{t-j}\theta_j\right\|  \leq 22\lambda\log\frac{2}{\delta'}.
    \end{eqnarray*}
\end{lemma}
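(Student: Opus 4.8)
The plan is to split $\theta_j = \theta_j^u + \theta_j^b$ as in \eqref{eq:theta_unbiased}--\eqref{eq:theta_biased} and bound the contributions of the two pieces to $\sum_{j=1}^t (1-\alpha)^{t-j}\theta_j$ separately: the bias part deterministically, the unbiased part by martingale concentration. For the bias part, the hypothesis $\lambda \geq 2\|\nabla F(x_j)\|$ lets Lemma~\ref{lem:bound_variance_magnitude} apply (this is exactly \eqref{eq:theta_u_theta_b_variance_and_magnitude}), so $\|\theta_j^b\| \leq 2^p\lambda^{1-p}\sigma^p$ and
\[
\Big\|\sum_{j=1}^t (1-\alpha)^{t-j}\theta_j^b\Big\| \;\leq\; 2^p\lambda^{1-p}\sigma^p\sum_{j=1}^t (1-\alpha)^{t-j} \;\leq\; \frac{2^p\lambda^{1-p}\sigma^p}{\alpha} \;\leq\; 2^p\lambda \;\leq\; 4\lambda,
\]
where the third inequality is precisely where the requirement $\lambda \geq \sigma\alpha^{-1/p}$ (equivalently $\lambda^{-p}\sigma^p \leq \alpha$) enters.

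For the unbiased part, set $X_j := (1-\alpha)^{t-j}\theta_j^u$. Conditioned on the $\sigma$-algebra $\cF_{j-1}$ generated by everything preceding the draw of $\xi_j$ (so that $x_j$ is $\cF_{j-1}$-measurable), we have $\EE[\theta_j^u\mid\cF_{j-1}] = 0$, $\|\theta_j^u\|\leq 2\lambda$, and $\EE[\|\theta_j^u\|^2\mid\cF_{j-1}] \leq 18\lambda^{2-p}\sigma^p \leq 18\alpha\lambda^2$ using \eqref{eq:theta_u_theta_b_variance_and_magnitude} and $\lambda\geq\sigma\alpha^{-1/p}$. Hence $\{X_j\}$ is a martingale difference sequence with $\|X_j\|\leq 2\lambda$ and
\[
\sum_{j=1}^t \EE\big[\|X_j\|^2\mid\cF_{j-1}\big] \;\leq\; 18\alpha\lambda^2\sum_{j=1}^t (1-\alpha)^{2(t-j)} \;\leq\; \frac{18\alpha\lambda^2}{1-(1-\alpha)^2} \;\leq\; 18\lambda^2 .
\]
Applying Lemma~\ref{lem:inequlity_from_cutkosky} to $\{X_j\}$ produces a \emph{scalar} martingale difference sequence $\{w_j\}$ with $|w_j|\leq\|X_j\|\leq 2\lambda$, hence $\EE[w_j^2\mid\cF_{j-1}]\leq\EE[\|X_j\|^2\mid\cF_{j-1}]$, such that $\big\|\sum_{j=1}^t X_j\big\| \leq \big|\sum_{j=1}^t w_j\big| + \sqrt{\max_j\|X_j\|^2 + \sum_{j=1}^t\|X_j\|^2}$.

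It then remains to control the two random quantities. For $|\sum_j w_j|$, Bernstein's inequality (Lemma~\ref{lem:Bernstain_inequality}) with $c = 2\lambda$ and $G = 18\lambda^2$ — the event $\{\sum_j\EE[w_j^2\mid\cF_{j-1}]\leq G\}$ being sure — gives $|\sum_j w_j|\leq c_1\lambda\log(2/\delta')$ with probability at least $1-\delta'$ for a universal constant $c_1$. For $\sum_j\|X_j\|^2$, I would write it as the predictable part $\sum_j\EE[\|X_j\|^2\mid\cF_{j-1}]\leq 18\lambda^2$ plus a martingale remainder whose increments are bounded by $4\lambda^2$ and whose summed conditional variances are $\leq 4\lambda^2\sum_j\EE[\|X_j\|^2\mid\cF_{j-1}]\leq 72\lambda^4$; a second application of Bernstein bounds this remainder by $c_2\lambda^2\log(2/\delta')$ with probability at least $1-\delta'$. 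On the intersection of these two events (probability $\geq 1-2\delta'$), using $\max_j\|X_j\|^2\leq 4\lambda^2$ and $\delta'\leq\tfrac12$ (which forces $\log(2/\delta')\geq 1$), one gets $\sqrt{\max_j\|X_j\|^2+\sum_j\|X_j\|^2}\leq c_3\lambda\log(2/\delta')$; adding the $4\lambda$ bias term and choosing the universal constants (a routine check shows $c_1=6$, $c_2=12$ suffice, giving a total constant below $22$) yields the claim.

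The main obstacle I anticipate is the nested concentration for $\sum_j\|X_j\|^2$: one must see its predictable part is genuinely $O(\lambda^2)$, which hinges on the calibration $\lambda\geq\sigma\alpha^{-1/p}$ turning $\lambda^{2-p}\sigma^p$ into $O(\alpha\lambda^2)$ and thereby cancelling the geometric factor $\sum_j(1-\alpha)^{2(t-j)}=O(1/\alpha)$, and then track all absolute constants through two Bernstein applications and the square root so the final constant stays at $22$. A secondary point to get right is the filtration: since each iteration draws $q_t$, $\hat\xi_t$, and $\xi_t$, one has to check that $x_j$ is measurable before $\xi_j$ is sampled, so that $\EE[\theta_j^u\mid\cF_{j-1}] = 0$ really holds.
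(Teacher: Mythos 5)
Your proposal is correct and follows essentially the same route as the paper's proof: the same split $\theta_j=\theta_j^u+\theta_j^b$ with the bias handled deterministically via Lemma~\ref{lem:bound_variance_magnitude} and the condition $\lambda\geq\sigma\alpha^{-1/p}$, the same reduction via Lemma~\ref{lem:inequlity_from_cutkosky}, and the same two applications of Bernstein's inequality (to the scalar projections and to the centered squared norms), with only cosmetic differences (you absorb $\lambda^{2-p}\sigma^p\leq\alpha\lambda^2$ up front and keep the $\max_j\|X_j\|^2$ term explicit). The only nitpick is that your quoted constants are slightly optimistic (with $c=2\lambda$, $G=18\lambda^2$ Bernstein gives roughly $6.7\lambda\log(2/\delta')$ rather than $6\lambda\log(2/\delta')$, and similarly $c_2\approx 15$ rather than $12$), but the total still sums to well below $22\lambda\log(2/\delta')$, so the stated bound holds.
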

\begin{proof} 
    We start with bounding $\left\|\sum^t_{j=1}(1-\alpha)^{t-j}\theta_j\right\|$. 
    
    \paragraph{Upper bound  for $\left\|\sum^t_{j=1}(1-\alpha)^{t-j}\theta_j\right\|$ } By \eqref{eq:theta_unbiased} and \eqref{eq:theta_biased}, we have 
    \begin{eqnarray*}
        \left\|\sum\limits^t_{j=1}(1-\alpha)^{t-j}\theta_j\right\| &\leq& \left\|\sum\limits^t_{j=1}(1-\alpha)^{t-j}\theta_j^u\right\| + \underbrace{\left\|\sum\limits^t_{j=1}(1-\alpha)^{t-j}\theta_j^b\right\|}_{\circledFour}.
    \end{eqnarray*}

    Denote $Y_j^t \eqdef \left\|(1-\alpha)^{t-j}\theta_j^u\right\|^2 - \EE_{\xi_j}\left[\left\|(1-\alpha)^{t-j}\theta_j^u\right\|^2\right]$, and $|X_j^t| \leq \norm{(1-\alpha)^{t-j}\theta_j^u}$ for any $j \in [t]$
\begin{equation*}
    V^t_j \eqdef \begin{cases}
        0,& \text{if } j = 0;\\
        \sign\left(\sum\limits^{j-1}_{i=1} V^t_i\right) \frac{\left\la \sum\limits^{j-1}_{i=1}(1-\alpha)^{t-i}\theta_i^u, (1-\alpha)^{t-j}\theta_j^u \right\ra}{\left\|\sum\limits^{j-1}_{i=1}(1-\alpha)^{t-i}\theta_i^u\right\|}, & \text{if } j\neq 0 \text{ and } \sum\limits^{j-1}_{i=1}(1-\alpha)^{t-i}\theta_i^u \neq 0;\\
        0, &  \text{if } j\neq 0 \text{ and } \sum\limits^{j-1}_{i=1}(1-\alpha)^{t-i}\theta_i^u = 0.
    \end{cases}
\end{equation*}
    Then to bound $\left\|\sum\limits^t_{j=1}(1-\alpha)^{t-j}\theta_j^u\right\|$ we use Lemma~\ref{lem:inequlity_from_cutkosky} and obtain 
    \begin{eqnarray*}
        \left\|\sum\limits^t_{j=1}(1-\alpha)^{t-j}\theta^u_j\right\| &\leq&  \left|\sum^t_{j=1} V^t_j\right| + \sqrt{\max_{j \in [t]}\left\|(1-\alpha)^{t-j}\theta_j^u\right\|^2 + \sum\limits^t_{j=1}\left\|(1-\alpha)^{t-j}\theta_j^u\right\|^2}\\
        &\leq& \left|\sum^t_{j=1} V^t_j\right| + \sqrt{2 \sum\limits^t_{j=1}\left\|(1-\alpha)^{t-j}\theta_j^u\right\|^2}\\
        &\leq& \Bigg|\underbrace{\sum^t_{j=1} V^t_j}_{\circledOne}\Bigg| + \Bigg(2\cdot\underbrace{\sum\limits^t_{j=1}Y_j^t}_{\circledTwo} + 2\cdot\underbrace{\sum^t_{j=1} \EE_{\xi_j}\left[\left\|(1-\alpha)^{t-j}\theta_j^u\right\|^2\right]}_{\circledThree}\Bigg)^{\nicefrac{1}{2}}.
    \end{eqnarray*}

\textbf{Upper bound for \circledOne.} The sequence $X_1^t, \dots, X_t^t$ is martingale difference sequence, since, by definition of $\theta^t_j$ and $X^t_j$, for all $j \in [t]$ we have  $\EE\left[X^t_j~|~X^t_{j-1},\dots, X^t_1\right] = \EE_{\xi_j}\left[X^t_j\right] = 0$. Also, according to Lemma~\ref{lem:inequlity_from_cutkosky}, we have  $|X_j^t| \leq \norm{(1-\alpha)^{t-j}\theta_j^u}$ for all $j \in [t]$. Using Lemma~\ref{lem:bound_variance_magnitude}, we obtain that
\begin{equation}
    |X_j^t| \leq \norm{(1-\alpha)^{t-j}\theta_j^u} \leq \norm{\theta_j^u} \leq 2\lambda, \quad \text{where } c_1 \eqdef 2\lambda.
\end{equation}
Denoting $\sigma^2_j \eqdef \EE\left[(X^t_j)^2~|~X^t_{j-1},\dots, X^t_1\right] =\EE_{\xi_j}\left[(X^t_j)^2\right] $, Lemma~\ref{lem:Bernstain_inequality} implies 
\begin{equation*}
    \PP\left\{\left|\circledOne\right| > b_1 \text{ and } \sum^t_{j=1}\sigma^2_j \leq G_1 \log \frac{2}{\delta'}\right\} \leq 2\exp\left(- \frac{b^2_1}{2G_1\log\frac{2}{\delta'} + \frac{2c_1b_1}{3} }\right) = \delta',
\end{equation*}
where the last identity is true, if we set $b_1 = \left(\frac{1}{3}c_1+\sqrt{\frac{1}{9}c_1 + 2G_1}\right)\log \frac{2}{\delta'}$. To define $G_1$, we need to bound $\sum^t_{j=1}\sigma^2_j$:
\begin{eqnarray}
    \sum^t_{j=1}\sigma^2_j &=& \sum^t_{j=1} \EE_{\xi_j}\left[(X^t_j)^2\right] \leq \sum^t_{j=1} \EE_{\xi_j}\left[\|(1-\alpha)^{t-j}\theta_j^u\|^2\right] = \sum^t_{j=1} (1-\alpha)^{2(t-j)}\EE_{\xi_j}\left[\|\theta_j^u\|^2\right] \notag\\
    &\overset{\eqref{eq:theta_u_theta_b_variance_and_magnitude}}{\leq}&  \sum^t_{j=1} (1-\alpha)^{2(t-j)} \cdot 18\lambda^{2-p}\sigma^p \leq \frac{18 \lambda^{2-p}\sigma^p}{1-(1-\alpha)^2}  \leq \frac{18 \lambda^{2-p}\sigma^p}{\alpha} = G_1. \label{eq:qwertyuaisja}
\end{eqnarray}
where we assumed $\|\nabla F(x_j)\| \leq \frac{\lambda}{2}$ for any $j \in [t]$.

\textbf{Upper bound for \circledTwo.} The sequence $Y_1^t, \dots, Y_t^t$ forms a martingale difference sequence, since the definition of $Y^t_j$ leads to  $\EE\left[Y^t_j~|~Y^t_{j-1},\dots, Y^t_1\right] = \EE_{\xi_j}\left[Y^t_j\right] = 0$ for all $j \in [t]$ . Also, according to Lemma~\ref{lem:bound_variance_magnitude}, we obtain that
\begin{equation}
\label{eq:bound_on_Y}
    |Y_j^t| \leq \norm{(1-\alpha)^{t-j}\theta_j^u}^2 + \E_{\xi_j}\left[\norm{(1-\alpha)^{t-j}\theta_j^u}^2\right] \leq 4\lambda^2 + 4 \lambda^2 = 8\lambda^2, 
\end{equation}
where we define $c_2 \eqdef 8\lambda^2$. Denoting  the conditional  variance of $Y^t_j$ as $\widetilde{\sigma}^2_j \eqdef  \EE\left[(Y^t_j)^2~|~Y^t_{j-1},\dots, Y^t_1\right] = \EE_{\xi_j}\left[(Y^t_j)^2\right] $, we can bound $\widetilde{\sigma}^2_j$ easily
\begin{eqnarray*}
    \widetilde{\sigma}^2_j  &\overset{\eqref{eq:bound_on_Y}}{\leq}& 8\lambda^2 \cdot \EE\left[\left|\left\|(1-\alpha)^{t-j}\theta_j^u\right\|^2 - \EE_{\xi_j}\left[\left\|(1-\alpha)^{t-j}\theta_j^u\right\|^2\right]\right|\right]\\
    &\leq& 16\lambda^2 \EE_{\xi_j}\left[\left\|(1-\alpha)^{t-j}\theta_j^u\right\|^2\right].
\end{eqnarray*}
Then, Lemma~\ref{lem:Bernstain_inequality} implies 
\begin{equation*}
    \PP\left\{\left|\circledTwo\right| > b_2 \text{ and } \sum^t_{j=1}\widetilde{\sigma}^2_j \leq G_2 \log \frac{2}{\delta'}\right\} \leq 2\exp\left(- \frac{b^2_2}{2G_2\log\frac{2}{\delta'} + \frac{2c_2b_2}{3} }\right) = \delta',
\end{equation*}
where the last identity is true, if we set $b_2 = \left(\frac{1}{3}c_2+\sqrt{\frac{1}{9}c_2 + 2G_2}\right)\log \frac{2}{\delta'}$. To define $G_2$, we need to bound $\sum^t_{j=1}\widetilde{\sigma}^2_j $:
\begin{eqnarray*}
    \sum^t_{j=1} \widetilde{\sigma}^2_j &\leq& 16\lambda^2 \sum^t_{j=1}\EE_{\xi_j}\left[\left\|(1-\alpha)^{t-j}\theta_j^u\right\|^2\right]\\
    &=& 16\lambda^2 \sum^t_{j=1}(1-\alpha)^{2(t-j)}\EE_{\xi_j}\left[\left\|\theta_j^u\right\|^2\right]\\
    &\overset{\eqref{eq:qwertyuaisja}}{\leq}& 16\lambda^2\cdot \frac{18\lambda^{2-p}\sigma^p}{\alpha}  = \frac{16\cdot 18 \lambda^{4-p}\sigma^p}{\alpha} = G_2,
\end{eqnarray*}
where we assumed $\|\nabla F(x_j)\| \leq \frac{\lambda}{2}$ for any $j \in [t]$.

\textbf{Upper bound for \circledThree.} Thanks to the proof of the bound on \circledThree~ (see \eqref{eq:qwertyuaisja}), we have already shown what we need: with probability $1$
\begin{equation*}
    \circledThree = \sum^t_{j=1} \EE_{\xi_j}\left[\left\|(1-\alpha)^{t-j}\theta_j^u\right\|^2\right] \overset{\eqref{eq:qwertyuaisja}}{\leq} \frac{18 \lambda^{2-p}\sigma^p}{\alpha}. 
\end{equation*}

\textbf{Upper bound on \circledFour.}  Assuming $\|\nabla F(x_j)\| \leq \frac{\lambda}{2}$ for any $j \in [t]$, with probability $1$ we have 
\begin{eqnarray*}
    \circledFour   = \left\|\sum\limits^t_{j=1} (1-\alpha)^{t-j}\theta^b_j\right\| \leq \sum^t_{j=1}(1-\alpha)^{t-j}\|\theta^b_j\| \overset{\eqref{eq:qwertyuaisja}}{\leq} 4\lambda^{1-p}\sigma^p\sum^t_{j=1}(1-\alpha)^{t-j} \leq \frac{4\lambda^{1-p}\sigma^p}{\alpha}.
\end{eqnarray*}

To sum up, we introduce event $E_{\circledOne, t}$ as follows 
\begin{equation}
\label{eq:def_event_one}
    E_{\circledOne, t} \eqdef \left\{\left|\circledOne\right| \leq b_1 \text{ or } \sum^t_{j=1}\sigma^2_j > G_1\log\frac{2}{\delta'}\right\},
\end{equation}
where $c_1 = 2\lambda$, $G_1 = \frac{18\lambda^{2-p}\sigma^p}{\alpha}$, $b_1 = \left(\frac{1}{3}c_1 + \sqrt{\frac{1}{9}c_1^2 + 2G_1}\right)\log\frac{2}{\delta'}$.
We have shown that $\PP\left\{E_{\circledOne, t}\right\} \geq 1-\delta'$. The bound on $b_1$ 
\begin{eqnarray*}
    b_1 &=& \left(\frac{1}{3}c_1 + \sqrt{\frac{1}{9}c_1^2 + 2G_1}\right)\log\frac{2}{\delta'} \leq \left(\frac{2}{3}c_1 + \sqrt{2G_1}\right)\log\frac{2}{\delta'}\\
    &=& \left(\frac{4}{3}\lambda + \sqrt{36\frac{\lambda^{2-p}\sigma^p}{\alpha}}\right)\log\frac{2}{\delta'} =  \lambda\left(\frac{4}{3}+6\sqrt{\frac{1}{\alpha}\left(\frac{\sigma}{\lambda}\right)^p}\right)\log\frac{2}{\delta'}.
\end{eqnarray*}

Also we define event $E_{\circledTwo, t}$ as follows 
\begin{equation}
\label{eq:def_event_two}
    E_{\circledTwo, t} = \left\{\left|\circledTwo\right| \leq b_2 \text{ or } \sum^t_{j=1}\widetilde{\sigma}^2_j > G_2\log\frac{2}{\delta'}\right\},
\end{equation}
where $c_2 = 8\lambda^2$, $G_2 = \frac{16\cdot 18 \lambda^{4-p}\sigma^p}{\alpha}$, $b_2 = \left(\frac{1}{3}c_2 + \sqrt{\frac{1}{9}c^2_2 + 2G_2}\right)\log\frac{2}{\delta'}$. We have shown that $\PP\left\{E_{\circledTwo, t}\right\} \geq 1-\delta'$. We adjust the bound on $b_2$:
\begin{eqnarray*}
    b_2 &=& \left(\frac{1}{3}c_2 + \sqrt{\frac{1}{9}c_2^2 + 2G_2}\right)\log\frac{2}{\delta'} \leq  \left(\frac{2}{3}c_2 + \sqrt{2G_2}\right)\log\frac{2}{\delta'}\\
    &=& \left(\frac{16}{3}\lambda^2 + \sqrt{\frac{16 \cdot 36\lambda^{4-p}\sigma^p}{\alpha}}\right)\log\frac{2}{\delta'} = \lambda^2\left(\frac{16}{3}+24\sqrt{\frac{1}{\alpha}\left(\frac{\sigma}{\lambda}\right)^p}\right)\log\frac{2}{\delta'}.
\end{eqnarray*}

Thus, the event $E_{\circledOne, t} \cap E_{\circledTwo, t}$ implies 
\begin{eqnarray*}
    \left\|\sum\limits^t_{j=1}(1-\alpha)^{t-j}\theta_j\right\| &\leq& \left\|\sum\limits^t_{j=1}(1-\alpha)^{t-j}\theta_j^u\right\| + \left\|\sum\limits^t_{j=1}(1-\alpha)^{t-j}\theta_j^b\right\|\\
    &\leq& \left|\circledOne\right| + \sqrt{2\cdot \circledTwo + 2\cdot \circledThree} + \circledFour\\
    &\leq& \lambda\left(\frac{4}{3}+6\sqrt{\frac{1}{\alpha}\left(\frac{\sigma}{\lambda}\right)^p}\right)\log\frac{2}{\delta'} + 4\lambda \cdot \frac{1}{\alpha}\left(\frac{\sigma}{\lambda}\right)^p\\
    && + \sqrt{2\lambda^2\left(\frac{16}{3}+24\sqrt{\frac{1}{\alpha}\left(\frac{\sigma}{\lambda}\right)^p}\right)\log\frac{2}{\delta'} + \frac{36 \lambda^{2-p}\sigma^p}{\alpha}}.
\end{eqnarray*}
Taking $\lambda \geq \frac{\sigma}{\alpha^{\nicefrac{1}{p}}}$, we have the event $E_{\circledOne, t} \cap E_{\circledTwo, t}$ implies 
\begin{eqnarray*}
    \left\|\sum\limits^t_{j=1}(1-\alpha)^{t-j}\theta_j\right\| &\leq& \lambda\left(\frac{4}{3}+ 6 + 4 + \sqrt{
    \frac{32}{3} + 84}\right)\log\frac{2}{\delta'} \leq 22\lambda\log\frac{2}{\delta'}.
\end{eqnarray*}
This concludes the proof.
\end{proof}

\begin{lemma}
\label{lem:technical_lemma_hp_2}
    Let Assumptions~\ref{assum:l_smooth}and ~\ref{ass:p_BCM_hess} hold. For any $\delta'' \in (0,\nicefrac{1}{2}] $ and any $t \in [T]$, if clipping level satisfy 
    $$ \lambda_h \geq \max\left\{2\gamma L, \frac{\gamma(L+\sigma_h)}{\alpha^{\nicefrac{1}{p}}}\right\}, $$
    and with probability at least $1-2\delta''$
    \begin{eqnarray*}
        \left\|\sum^t_{j=1}(1-\alpha)^{t-j+1}\omega_j\right\| \leq  59\lambda_h\log\frac{2}{\delta''}.
\end{eqnarray*}
\end{lemma}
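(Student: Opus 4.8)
The plan is to follow the proof of Lemma~\ref{lem:technical_lemma_hp_1} essentially line by line, with the substitution $\lambda \mapsto \lambda_h$ and the gradient noise scale $\sigma^p$ replaced by the Hessian--momentum noise scale $\gamma^p(6L^p+3\sigma_h^p)$, and then to absorb the (larger) resulting numerical constants. Two preliminary observations make this substitution work. First, since $\|x_j - x_{j-1}\| = \gamma$ by the normalized update and $F$ is $L$-smooth, we have $2\|\nabla F(x_j)-\nabla F(x_{j-1})\| \leq 2L\gamma \leq \lambda_h$, so Lemma~\ref{lem:bound_variance_magnitude} applies to the clipped Hessian--vector products and yields \eqref{eq:omega_u_omega_b_variance_and_magnitude} for every $j \in [t]$ (in particular $\|\omega_j^u\| \leq 2\lambda_h$ unconditionally). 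Second, because $6L^p + 3\sigma_h^p \leq 6(L+\sigma_h)^p$ for $p \geq 1$, the second part of the assumption, $\lambda_h \geq \gamma(L+\sigma_h)/\alpha^{1/p}$, gives the key inequality $\gamma^p(6L^p+3\sigma_h^p) \leq 6\alpha\lambda_h^p$, which plays the role of $\sigma^p \leq \alpha\lambda^p$ in Lemma~\ref{lem:technical_lemma_hp_1}, inflated by the factor $6$.

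Next I would split, via \eqref{eq:omega_unbiased}--\eqref{eq:omega_biased},
\[
\left\|\sum_{j=1}^t (1-\alpha)^{t-j+1}\omega_j\right\| \;\leq\; \left\|\sum_{j=1}^t (1-\alpha)^{t-j+1}\omega_j^u\right\| + \underbrace{\left\|\sum_{j=1}^t (1-\alpha)^{t-j+1}\omega_j^b\right\|}_{\circledFour},
\]
bounding the bias term deterministically by $\circledFour \leq \sum_{j=1}^t (1-\alpha)^{t-j+1}\|\omega_j^b\| \leq \frac{2^p\lambda_h^{1-p}\gamma^p(6L^p+3\sigma_h^p)}{\alpha} \leq 24\lambda_h$, using $2^p \leq 4$, the geometric bound $\sum_j (1-\alpha)^{t-j+1} \leq 1/\alpha$, and the key inequality. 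For the unbiased part I would apply Lemma~\ref{lem:inequlity_from_cutkosky} with $X_j = (1-\alpha)^{t-j+1}\omega_j^u$, obtaining $\big\|\sum_j X_j\big\| \leq |\circledOne| + \sqrt{2\circledTwo + 2\circledThree}$, where $\circledOne = \sum_j V_j^t$ is the scalar process of that lemma, $\circledTwo = \sum_j\big(\|X_j\|^2 - \EE_{q_j,\hat\xi_j}[\|X_j\|^2]\big)$, and $\circledThree = \sum_j \EE_{q_j,\hat\xi_j}[\|X_j\|^2]$. As in Lemma~\ref{lem:technical_lemma_hp_1}, $\{X_j\}_{j\le t}$, the derived $\{V_j^t\}$, and the centered squares $\{Y_j^t\}$ are martingale difference sequences with respect to the natural filtration (the iterates $x_j, x_{j-1}$ are revealed before $(q_j,\hat\xi_j)$ is sampled, and $\EE_{q_j,\hat\xi_j}[\omega_j^u]=0$), and $\circledThree \leq \frac{18\lambda_h^{2-p}\gamma^p(6L^p+3\sigma_h^p)}{\alpha} \leq 108\lambda_h^2$ deterministically by \eqref{eq:omega_u_omega_b_variance_and_magnitude} together with $\sum_j (1-\alpha)^{2(t-j+1)} \leq 1/\alpha$.

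The heart of the argument is then two applications of Bernstein's inequality (Lemma~\ref{lem:Bernstain_inequality}), identical in form to those in Lemma~\ref{lem:technical_lemma_hp_1}. For $\circledOne$: $|X_j| \leq \|\omega_j^u\| \leq 2\lambda_h =: c_1$ and the conditional variances sum to at most $108\lambda_h^2 =: G_1$, so with probability $\geq 1-\delta''$ one gets $|\circledOne| \leq \big(\tfrac13 c_1 + \sqrt{\tfrac19 c_1^2 + 2G_1}\big)\log\tfrac{2}{\delta''} \leq \lambda_h\big(\tfrac43 + \sqrt{216}\big)\log\tfrac{2}{\delta''}$. For $\circledTwo$: $|Y_j^t| \leq 8\lambda_h^2 =: c_2$, and its conditional variances sum to at most $16\lambda_h^2\cdot\frac{18\lambda_h^{2-p}\gamma^p(6L^p+3\sigma_h^p)}{\alpha} \leq 1728\lambda_h^4 =: G_2$, so with probability $\geq 1-\delta''$ one gets $|\circledTwo| \leq \big(\tfrac13 c_2 + \sqrt{\tfrac19 c_2^2 + 2G_2}\big)\log\tfrac{2}{\delta''} \leq \lambda_h^2\big(\tfrac{16}{3} + \sqrt{3456}\big)\log\tfrac{2}{\delta''}$. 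Intersecting these two events (failure probability at most $2\delta''$) and using $\log\tfrac{2}{\delta''}\geq 1$ to pull all constant summands and all $\sqrt{\log}$ factors under a single $\log\tfrac{2}{\delta''}$, I would collect
\[
\left\|\sum_{j=1}^t (1-\alpha)^{t-j+1}\omega_j\right\| \leq |\circledOne| + \sqrt{2\circledTwo + 2\circledThree} + \circledFour \leq \lambda_h\left(\tfrac43 + \sqrt{216} + 24 + \sqrt{\tfrac{32}{3} + 2\sqrt{3456} + 216}\,\right)\log\tfrac{2}{\delta''}.
\]
The only remaining obstacle is the arithmetic check that the bracketed constant is at most $59$ (indeed $\sqrt{216} = 6\sqrt{6} \approx 14.70$, $\sqrt{3456} = 24\sqrt{6} \approx 58.79$, and $\tfrac43 + 14.70 + 24 + \sqrt{10.67 + 117.58 + 216} \approx 58.6$); everything else is a transcription of Lemma~\ref{lem:technical_lemma_hp_1} with the Hessian noise scale in place of the gradient noise scale.
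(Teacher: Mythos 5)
Your proposal is correct and follows essentially the same route as the paper's own proof: the same unbiased/biased decomposition of $\omega_j$, the same use of Lemma~\ref{lem:inequlity_from_cutkosky}, two applications of Bernstein's inequality with $c_3=2\lambda_h$, $c_4=8\lambda_h^2$ and the same variance bounds from Lemma~\ref{lem:bound_variance_magnitude}, and the same reduction $\gamma^p(6L^p+3\sigma_h^p)\leq 6\alpha\lambda_h^p$ from the clipping-level condition, arriving at the identical constant $\tfrac43+6\sqrt6+24+\sqrt{\tfrac{32}{3}+48\sqrt6+216}\leq 59$. The only (immaterial) difference is that you substitute the numerical bound on the noise scale before invoking Bernstein, whereas the paper carries the symbolic quantities and substitutes at the end.
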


\begin{proof}
 By \eqref{eq:omega_unbiased} and \eqref{eq:omega_biased}, we have 
    \begin{eqnarray}
        \left\|\sum\limits^t_{j=1}(1-\alpha)^{t-j+1}\omega_j\right\| &\leq& \left\|\sum\limits^t_{j=1}(1-\alpha)^{t-j+1}\omega_j^u\right\| + \underbrace{\left\|\sum\limits^t_{j=1}(1-\alpha)^{t-j+1}\omega_j^b\right\|}_{\circledEight}. \label{eq:eight_def}
    \end{eqnarray}
    Denote $Z_j^t \eqdef \left\|(1-\alpha)^{t-j+1}\omega_j^u\right\|^2 - \EE_{q_j, \xi_j}\left[\left\|(1-\alpha)^{t-j+1}\omega_j^u\right\|^2\right]$, and $|W_j^t| \leq \norm{(1-\alpha)^{t-j+1}\omega_j^u}$ for any $j \in [t]$
\begin{equation*}
    W^t_j \eqdef \begin{cases}
        0,& \text{if } j = 0;\\
        \sign\left(\sum\limits^{j-1}_{i=1} W^t_i\right) \frac{\left\la \sum\limits^{j-1}_{i=1}(1-\alpha)^{t-i+1}\omega_i^u, (1-\alpha)^{t-j+1}\omega_j^u \right\ra}{\left\|\sum\limits^{j-1}_{i=1}(1-\alpha)^{t-i+1}\omega_i^u\right\|}, & \text{if } j\neq 0 \text{ and } \sum\limits^{j-1}_{i=1}(1-\alpha)^{t-i+1}\omega_i^u \neq 0;\\
        0, &  \text{if } j\neq 0 \text{ and } \sum\limits^{j-1}_{i=1}(1-\alpha)^{t-i+1}\omega_i^u = 0.
    \end{cases}
\end{equation*}
    Then to bound $\left\|\sum\limits^t_{j=1}(1-\alpha)^{t-j+1}\omega_j^u\right\|$ we use Lemma~\ref{lem:inequlity_from_cutkosky} and obtain 
    \begin{eqnarray*}
        \left\|\sum\limits^t_{j=1}(1-\alpha)^{t-j}\omega^u_j\right\| &\leq&  \left|\sum^t_{j=1} W^t_j\right| + \sqrt{\max_{j \in [t]}\left\|(1-\alpha)^{t-j}\omega_j^u\right\|^2 + \sum\limits^t_{j=1}\left\|(1-\alpha)^{t-j}\omega_j^u\right\|^2}\\
        &\leq& \left|\sum^t_{j=1} W^t_j\right| + \sqrt{2 \sum\limits^t_{j=1}\left\|(1-\alpha)^{t-j+1}\omega_j^u\right\|^2}\\
        &\leq& \Bigg|\underbrace{\sum^t_{j=1} W^t_j}_{\circledFive}\Bigg| + \Bigg(2\cdot\underbrace{\sum\limits^t_{j=1}Z_j^t}_{\circledSix} + 2\cdot\underbrace{\sum^t_{j=1} \EE_{q_j,\xi_j}\left[\left\|(1-\alpha)^{t-j+1}\omega_j^u\right\|^2\right]}_{\circledSeven}\Bigg)^{\nicefrac{1}{2}}.
    \end{eqnarray*}

\textbf{Upper bound for \circledFive.} The sequence $W_1^t, \dots, W_t^t$ is martingale difference sequence, since, by definition of $\omega^t_j$ and $W^t_j$, for all $j \in [t]$ we have  $\EE\left[W^t_j~|~W^t_{j-1},\dots, W^t_1\right] = \EE_{q_j, \xi_j}\left[W^t_j\right] = 0$. Also, according to Lemma~\ref{lem:inequlity_from_cutkosky}, we have  $|W_j^t| \leq \norm{(1-\alpha)^{t-j}\omega_j^u}$ for all $j \in [t]$. Using Lemma~\ref{lem:bound_variance_magnitude}, we obtain that
\begin{equation}
\label{eq:mdskcmsdl}
    |W_j^t| \leq \norm{(1-\alpha)^{t-j}\omega_j^u} \leq \norm{\omega_j^u} \leq 2\lambda_h, \quad \text{where } c_3 \eqdef 2\lambda_h.
\end{equation}
Denoting $\sigma^2_j \eqdef \EE\left[(W^t_j)^2~|~W^t_{j-1},\dots, W^t_1\right] =\EE_{\xi_j}\left[(W^t_j)^2\right] $, Lemma~\ref{lem:Bernstain_inequality} implies 
\begin{equation*}
    \PP\left\{\left|\circledFive\right| > b_3 \text{ and } \sum^t_{j=1}\Bar\sigma^2_j \leq G_3 \log \frac{2}{\delta''}\right\} \leq 2\exp\left(- \frac{b^2_3}{2G_3\log\frac{2}{\delta''} + \frac{2c_3b_3}{3} }\right) = \delta'',
\end{equation*}
where the last identity is true, if we set $b_3 = \left(\frac{1}{3}c_3+\sqrt{\frac{1}{9}c_3 + 2G_3}\right)\log \frac{2}{\delta''}$. To define $G_3$, we need to bound $\sum^t_{j=1}\Bar \sigma^2_j$:
\begin{eqnarray}
    \sum^t_{j=1}\Bar{\sigma}^2_j &=& \sum^t_{j=1} \EE_{q_j,\xi_j}\left[(W^t_j)^2\right] 
    \overset{\eqref{eq:mdskcmsdl}}{\leq}  \sum^t_{j=1} (1-\alpha)^{2(t-j+1)}\EE_{q_j,\xi_j}\left[\|\omega_j^u\|^2\right] \notag \\
    &\overset{\eqref{eq:omega_u_omega_b_variance_and_magnitude}}{\leq}&  \sum^t_{j=1} (1-\alpha)^{2(t-j+1)} \cdot 18\lambda^{2-p}_h\left(6L^p+3\sigma^p_h\right)\gamma^p
    \leq \frac{18 \lambda^{2-p}_h(6L^p+3\sigma^p_h)\gamma^p}{1-(1-\alpha)^2} \notag\\
    &=& \frac{18 \lambda^{2-p}_h\left(6L^p+3\sigma^p_h\right)\gamma^p}{\alpha(2-\alpha)} \leq \frac{18 \lambda^{2-p}_h\left(6L^p+3\sigma^p_h\right)\gamma^p}{\alpha} = G_3. \label{eq:mjkcniwecuidhhjs}
\end{eqnarray}

\textbf{Upper bound for \circledSix.} The sequence $Z_1^t, \dots, Z_t^t$ forms a martingale difference sequence, since the definition of $Z^t_j$ leads to  $\EE\left[Z^t_j~|~Z^t_{j-1},\dots, Z^t_1\right] = \EE_{q_j,\xi_j}\left[Z^t_j\right] = 0$ for all $j \in [t]$ . Also, according to Lemma~\ref{lem:bound_variance_magnitude}, we obtain that
\begin{equation}
\label{eq:bound_on_Z}
    |Z_j^t| \leq \norm{(1-\alpha)^{t-j+1}\omega_j^u}^2 + \E_{q_j, \xi_j}\left[\norm{(1-\alpha)^{t-j+1}\omega_j^u}^2\right] \leq 4\lambda^2_h + 4 \lambda^2_h = 8\lambda^2_h, 
\end{equation}
where we define $c_4 \eqdef 8\lambda^2_h$. Denoting  the conditional  variance of $Z^t_j$ as $\widehat{\sigma}^2_j \eqdef  \EE\left[(Z^t_j)^2~|~Z^t_{j-1},\dots, Z^t_1\right] = \EE_{q_j, \xi_j}\left[(Z^t_j)^2\right] $, we can bound $\widehat{\sigma}^2_j$ easily
\begin{eqnarray}
    \widehat{\sigma}^2_j  &\overset{\eqref{eq:bound_on_Z}}{\leq}& 8\lambda^2_h \cdot \EE_{q_j, \xi_j}\left[\left|\left\|(1-\alpha)^{t-j+1}\omega_j^u\right\|^2 - \EE_{q_j, \xi_j}\left[\left\|(1-\alpha)^{t-j+1}\omega_j^u\right\|^2\right]\right|\right] \notag\\
    &\leq& 16\lambda^2_h \EE_{q_j, \xi_j}\left[\left\|(1-\alpha)^{t-j+1}\omega_j^u\right\|^2\right]. \label{eq:lsmclksnciooeiw}
\end{eqnarray}
Then, Lemma~\ref{lem:Bernstain_inequality} implies 
\begin{equation*}
    \PP\left\{\left|\circledSix\right| > b_4 \text{ and } \sum^t_{j=1} \widehat{\sigma}^2_j \leq G_4 \log \frac{2}{\delta'}\right\} \leq 2\exp\left(-\frac{b^2_4}{2G_4\log\frac{2}{\delta'} +\frac{2c_4b_4}{3}}\right) = \delta'',
\end{equation*}
where the last identity is true, if we set $b_4 = \left(\frac{1}{3}c_4+\sqrt{\frac{1}{9}c_4 + 2G_4}\right)\log \frac{2}{\delta''}$. To define $G_4$, we need to bound $\sum^t_{j=1}\widehat{\sigma}^2_j $:
\begin{eqnarray*}
    \sum^t_{j=1} \widehat{\sigma}^2_j &\overset{\eqref{eq:lsmclksnciooeiw}}{\leq}& 16\lambda^2_h \sum^t_{j=1}\EE_{q_j, \xi_j}\left[\left\|(1-\alpha)^{t-j+1}\omega_j^u\right\|^2\right] = 16\lambda^2_h \sum^t_{j=1}(1-\alpha)^{2(t-j+1)}\EE_{q_j, \xi_j}\left[\left\|\omega_j^u\right\|^2\right]\\
    &\leq& 16\lambda^2_h\cdot \frac{18\lambda_h^{2-p}\left(6L^p+3\sigma^p_h\right)\gamma^p}{\alpha}  = \frac{16\cdot 18 \lambda^{4-p}_h(6L^p+3\sigma^p_h)\gamma^p}{\alpha} = G_4.
\end{eqnarray*}

\textbf{Upper bound for \circledSeven.} Thanks to the proof of the bound on \circledFour~ (see \eqref{eq:mjkcniwecuidhhjs}), we have already shown what we need: with probability $1$
\begin{equation*}
    \circledSeven = \sum^t_{j=1} \EE_{\xi_j}\left[\left\|(1-\alpha)^{t-j+1}\omega_j^u\right\|^2\right] \overset{\eqref{eq:mjkcniwecuidhhjs}}{\leq} \frac{18 \lambda^{2-p}(6L^p+3\sigma^p)\gamma^p}{\alpha}. 
\end{equation*}

\textbf{Upper bound on \circledEight.}  By definition of $\omega^b_j$, with probability $1$ we have 
\begin{eqnarray*}
    \circledEight &\overset{\eqref{eq:eight_def}}{=}& \left\|\sum\limits^t_{j=1} (1-\alpha)^{t-j+1}\omega^b_j\right\| \leq \sum^t_{j=1}(1-\alpha)^{t-j+1}\|\omega^b_j\|\\
    &\overset{\eqref{eq:omega_u_omega_b_variance_and_magnitude}}{\leq}& 4\lambda_h^{1-p}\left(6L^p +3\sigma^p_h\right)\gamma^p\sum^t_{j=1}(1-\alpha)^{t-j+1} \leq \frac{4\lambda_h^{1-p}\left(6L^p+3\sigma^p_h\right)\gamma^p}{\alpha}.
\end{eqnarray*}

To sum up, we introduce event $E_{\circledFive, t}$ as follows 
\begin{equation}
\label{eq:def_event_5}
    E_{\circledFive, t} = \left\{\left|\circledFive\right| \leq b_3 \text{ or } \sum^t_{j=1}\Bar{\sigma}^2_j > G_3\log\frac{2}{\delta''}\right\},
\end{equation}
where $c_3 = 2\lambda_h$, $G_3= \frac{18\lambda^{2-p}_h\left(6L^p+3\sigma^p_h\right)\gamma^p}{\alpha}$, $b_3 = \left(\frac{1}{3}c_3 + \sqrt{\frac{1}{9}c_3^2 + 2G_3}\right)\log\frac{2}{\delta''}$.
We have shown that $\PP\left\{E_{\circledFive, t}\right\} \geq 1-\delta''$. We adjust the bound on $b_3$:
\begin{eqnarray*}
    b_3 &=& \left(\frac{1}{3}c_3 + \sqrt{\frac{1}{9}c_3^2 + 2G_3}\right)\log\frac{2}{\delta'} \leq \left(\frac{2}{3}c_3 + \sqrt{2G_3}\right)\log\frac{2}{\delta''}\\
    &=& \left(\frac{4}{3}\lambda_h + \sqrt{36\frac{\lambda^{2-p}_h\left(6L^p + 3\sigma^p_h\right)\gamma^p}{\alpha}}\right)\log\frac{2}{\delta''}\\
    &=&  \lambda_h\left(\frac{4}{3}+6\sqrt{\frac{(6L^p+3\sigma^p_h)\gamma^p}{\alpha\lambda^p}}\right)\log\frac{2}{\delta''}.
\end{eqnarray*}

Also we introduce event $E_{\circledSix, t}$ as follows 
\begin{equation}
\label{eq:def_event_6}
    E_{\circledSix, t} = \left\{\left|\circledSix\right| \leq b_4 \text{ or } \sum^t_{j=1}\widehat{\sigma}^2_j > G_4\log\frac{2}{\delta''}\right\},
\end{equation}
where $c_4 = 8\lambda^2_h$, $G_4 = \frac{16\cdot 18 \lambda^{4-p}_h\left(6L^p+3\sigma^p_h\right)\gamma^p}{\alpha}$, $b_4 = \left(\frac{1}{3}c_4 + \sqrt{\frac{1}{9}c^2_4 + 2G_4}\right)\log\frac{2}{\delta''}$. We have shown that $\PP\left\{E_{\circledSix, t}\right\} \geq 1-\delta''$. We adjust the bound on $b_4$:
\begin{eqnarray*}
    b_4 &=& \left(\frac{1}{3}c_4 + \sqrt{\frac{1}{9}c_4^2 + 2G_4}\right)\log\frac{2}{\delta'} \leq  \left(\frac{2}{3}c_4 + \sqrt{2G_4}\right)\log\frac{2}{\delta''}\\
    &=& \left(\frac{16}{3}\lambda^2_h + \sqrt{\frac{16 \cdot 36\lambda_h^{4-p}\left(6L^p+3\sigma^p_h\right)\gamma^p}{\alpha}}\right)\log\frac{2}{\delta''} \\
    &=& \lambda^2_h\left(\frac{16}{3}+24\sqrt{\frac{\left(6L^p+3\sigma^p_h\right)\gamma^p}{\alpha\lambda^p_h}}\right)\log\frac{2}{\delta''}.
\end{eqnarray*}

Thus, the event $E_{\circledFive, t} \cap E_{\circledSix, t}$ implies 
\begin{eqnarray*}
    \left\|\sum\limits^t_{j=1}(1-\alpha)^{t-j+1}\omega_j\right\| &\leq& \left\|\sum\limits^t_{j=1}(1-\alpha)^{t-j+1}\omega_j^u\right\| + \left\|\sum\limits^t_{j=1}(1-\alpha)^{t-j+1}\omega_j^b\right\|\\
    &\leq& \left|\circledFive\right| + \sqrt{2\cdot \circledSix + 2\cdot \circledSeven} + \circledEight\\
    &\leq& \lambda_h\left(\frac{4}{3}+6\sqrt{\frac{\left(6L^p+3\sigma^p_h\right)\gamma^p}{\alpha\lambda^p_h}}\right)\log\frac{2}{\delta''} + 4\lambda_h \cdot \frac{\left(6L^p+3\sigma^p_h\right)\gamma^p}{\alpha\lambda^p_h}\\
    && + \sqrt{2\lambda^2_h\left(\frac{16}{3}+24\sqrt{\frac{\left(6L^p+3\sigma^p_h\right)\gamma^p}{\alpha\lambda^p_h}}\right)\log\frac{2}{\delta''} +36 \lambda^{2}_h\frac{(6L^p+3\sigma^p_h)}{\alpha\lambda^p_h}}.
\end{eqnarray*}
Assuming that $\lambda_h \geq \frac{\gamma(L+\sigma_h)}{\alpha^{\nicefrac{1}{p}}}$, we have the event $E_{\circledFive, t} \cap E_{\circledSix, t}$ implies 
\begin{eqnarray*}
    \left\|\sum\limits^t_{j=1}(1-\alpha)^{t-j+1}\omega_j\right\| &\leq& \lambda_h\left(\frac{4}{3}+ 6\sqrt{6} + 24 + \sqrt{
    \frac{32}{3} + 48\sqrt{6}+216}\right)\log\frac{2}{\delta''}\\
    &\leq& 59\lambda_h\log\frac{2}{\delta''}.
\end{eqnarray*}

\end{proof}

\paragraph{Main Theorem} Now we are ready to state and prove the main results of this section: high-probability convergence guarantees for Algorithm~\ref{alg:NSGD_SOM_clipped}. 

\begin{theorem}
\label{thm:main_theorem_hp_app}
Let Assumptions~\ref{assum:lower_bounded},~\ref{assum:l_smooth},~\ref{ass:p_BCM} and ~\ref{ass:p_BCM_hess}  hold, and $\Delta_1  = \Delta_0 = F(x_0) - F_*$ . Then if the parameters of Algorithm~\ref{alg:NSGD_SOM_clipped}: stepsize
\begin{equation}
\label{eq:stepsize_hp_part}
\small{
    \gamma \leq \min\Bigg\{\frac{1}{2}\sqrt{\frac{\Delta_1}{LT}}, \frac{\alpha}{12}\sqrt{\frac{\Delta_1}{L}}, \frac{1}{1408\alpha T \log \frac{8T}{\delta}}\sqrt{\frac{\Delta_1}{L}},\frac{\Delta_1}{352\sigma \alpha^{\frac{p-1}{p}}T\log\frac{8T}{\delta}}, \sqrt{\frac{\Delta_1 \alpha^{\nicefrac{1}{p}}}{968(L+\sigma_h)T\log\frac{8T}{\delta}}}\Bigg\},}
\end{equation}
momentum parameter and clipping levels  
\begin{equation}
\label{eq:momentum_clipping_level}
    \alpha = \frac{1}{T^{\frac{p}{2p-1}}},\quad \lambda = \max\left\{4\sqrt{L\Delta_1}, \frac{\sigma}{\alpha^{\nicefrac{1}{p}}}\right\}, \quad \lambda_h = \frac{2\gamma(L+\sigma_h)}{\alpha^{\nicefrac{1}{p}}}
\end{equation}
for some $T \geq 1$ and $\delta \in (0,1]$ such that $\log\frac{8T}{\delta} \geq 1$. Then after $T$ iterations of Algorithm~\ref{alg:NSGD_SOM_clipped} the iterates with probability at least $1-\delta$ satisfy 
\begin{equation}
    \frac{1}{T}\sum^{T-1}_{t=0}\norm{\nabla F(x_t)} \leq \frac{2\Delta_1}{\gamma T}.
\end{equation}
Moreover, if we set \eqref{eq:stepsize_hp_part} as identity, then the iterates generated by Algorithm~\ref{alg:NSGD_SOM_clipped} after $T$ iterations with probability at least $1-\delta$ satisfy 
\begin{equation}
    \frac{1}{T}\sum^{T-1}_{t=0}\norm{\nabla F(x_t)} = \cO\left(\frac{\max\left\{\sqrt{\Delta_1(L+\sigma_h)}, \sigma\right\}}{T^{\frac{p-1}{2p-1}}}\log \frac{T}{\delta}\right),
\end{equation}
implying that to achieve $\frac{1}{T}\sum^{T-1}_{t=0}\norm{\nabla F(x_t)} \leq \varepsilon$ with probability at least $1-\delta$ Algorithm~\ref{alg:NSGD_SOM_clipped} requires 
\begin{equation}
\small
    T = \cO\left(\left(\frac{\max\left\{\sqrt{\Delta_1(L+\sigma_h)}, \sigma\right\}}{\varepsilon}\right)^{\frac{2p-1}{p-1}}\log^{\frac{2p-1}{p-1}}\left(\frac{1}{\delta}\left(\frac{\max\left\{\sqrt{\Delta_1(L+\sigma_h)}, \sigma\right\}}{\varepsilon}\right)^{\frac{2p-1}{p-1}}\right)\right)
\end{equation}
iterations/samples. 
\end{theorem}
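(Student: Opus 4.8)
I would combine the deterministic Descent Lemma (Lemma~\ref{lem:descent_lemma_clipped_sgd}) with the high-probability bounds on the clipped momentum error (Lemmas~\ref{lem:technical_lemma_hp_1} and~\ref{lem:technical_lemma_hp_2}), and resolve the standard circularity --- the gradient-clipping threshold $\lambda$ must exceed $2\|\nabla F(x_j)\|$ for every $j$, which is essentially what we are proving --- by an induction over the iteration index. Fix the failure budget $\delta' = \delta'' = \delta/(8T)$, so that $\log(2/\delta') \le 2\log(8T/\delta)$. For every $t \le T$, the Descent Lemma bounds, on every sample path, the quantity $\gamma\sum_{s=0}^{t-1}\|\nabla F(x_s)\| + \Delta_t$ by $\Delta_1$ plus five error terms: $\tfrac{\gamma^2 Lt}{2}$, $\tfrac{3\gamma}{\alpha}\sqrt{L\Delta_1}$, a term of weight $2\gamma\alpha$ summing $\|\sum_{j=1}^{s}(1-\alpha)^{s-j}\theta_j\|$ over $s < t$, and a term of weight $2\gamma$ summing $\|\sum_{j=1}^{s}(1-\alpha)^{s-j+1}\omega_j\|$ over $s < t$. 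Controlling the two ``noise sums'' is the heart of the argument. Observe first that, by~\eqref{eq:momentum_clipping_level}, $\lambda_h = 2\gamma(L+\sigma_h)\alpha^{-1/p} \ge 2\gamma L \ge 2\|\nabla F(x_s) - \nabla F(x_{s-1})\|$ unconditionally ($L$-smoothness plus the fact that each step moves by $\gamma$), so the hypothesis of Lemma~\ref{lem:technical_lemma_hp_2} always holds and the $\omega$-sum needs no induction; only the $\theta$-sum does, since Lemma~\ref{lem:technical_lemma_hp_1} requires $\lambda \ge 2\|\nabla F(x_j)\|$.

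\textbf{Induction over the horizon.} For $t \in \{0,\dots,T-1\}$, let $\mathcal E_t$ be the event that $\Delta_\tau \le 2\Delta_1$ for all $\tau \le t$, and that for all $s \le t$ both partial sums obey $\|\sum_{j=1}^{s}(1-\alpha)^{s-j}\theta_j\| \le 22\lambda\log(2/\delta')$ and $\|\sum_{j=1}^{s}(1-\alpha)^{s-j+1}\omega_j\| \le 59\lambda_h\log(2/\delta'')$. I would show $\PP(\mathcal E_{T-1}) \ge 1-\delta$ by induction; $\mathcal E_0$ is trivial. For the step, work on $\mathcal E_t$. The Descent Lemma at horizon $t{+}1$ involves only the partial sums with outer index $\le t$, which $\mathcal E_t$ controls, so feeding in those bounds and using the five stepsize caps of~\eqref{eq:stepsize_hp_part} --- $\gamma \le \tfrac12\sqrt{\Delta_1/(LT)}$ for $\tfrac{\gamma^2 Lt}{2}$; $\gamma \le \tfrac{\alpha}{12}\sqrt{\Delta_1/L}$ for $\tfrac{3\gamma}{\alpha}\sqrt{L\Delta_1}$; $\gamma \le \tfrac{1}{1408\alpha T\log(8T/\delta)}\sqrt{\Delta_1/L}$ together with $\gamma \le \tfrac{\Delta_1}{352\sigma\alpha^{(p-1)/p}T\log(8T/\delta)}$ for the $\theta$-sum (splitting $\lambda \le 4\sqrt{L\Delta_1}+\sigma\alpha^{-1/p}$); and $\gamma \le \sqrt{\Delta_1\alpha^{1/p}/(968(L+\sigma_h)T\log(8T/\delta))}$ for the $\omega$-sum --- each error term is at most a fixed fraction of $\Delta_1$, so their sum is $<\Delta_1$ and $\Delta_{t+1} \le 2\Delta_1$ holds deterministically on $\mathcal E_t$. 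Consequently $\|\nabla F(x_{t+1})\| \le \sqrt{2L\Delta_{t+1}} \le 2\sqrt{L\Delta_1} \le \lambda/2$ (using $\lambda \ge 4\sqrt{L\Delta_1}$), the hypotheses of both lemmas are met up to index $t{+}1$, and those lemmas (via their Bernstein decompositions) give that the partial-sum bounds at outer index $t{+}1$ hold with conditional probability at least $1 - 2\delta' - 2\delta''$. Hence $\PP(\mathcal E_{t+1}) \ge \PP(\mathcal E_t) - 2\delta'-2\delta''$, and iterating, $\PP(\mathcal E_{T-1}) \ge 1 - (T-1)(2\delta'+2\delta'') \ge 1-\delta/2 \ge 1-\delta$.

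\textbf{Conclusion and sample complexity.} On $\mathcal E_{T-1}$, the Descent Lemma at $t = T$ together with $\Delta_T \ge 0$ gives $\gamma\sum_{t=0}^{T-1}\|\nabla F(x_t)\| \le 2\Delta_1$, i.e.\ $\frac1T\sum_{t=0}^{T-1}\|\nabla F(x_t)\| \le \frac{2\Delta_1}{\gamma T}$ with probability at least $1-\delta$. Taking $\gamma$ equal to the minimum in~\eqref{eq:stepsize_hp_part} and substituting $\alpha = T^{-p/(2p-1)}$, the binding caps scale like $T^{-p/(2p-1)}/\log(T/\delta)$ up to factors of $\Delta_1, L, \sigma, \sigma_h$, whence $\frac1T\sum_t\|\nabla F(x_t)\| = \mathcal O\!\big(\max\{\sqrt{\Delta_1(L+\sigma_h)},\sigma\}\,T^{-(p-1)/(2p-1)}\log(T/\delta)\big)$. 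Setting this at most $\varepsilon$ and solving for $T$, with the inner $\log T$ removed by the standard bootstrap for inequalities of the form $x \le a\log^c x$, yields the stated iteration and sample complexity.

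\textbf{Main obstacle.} The delicate point is making the conditioning on $\mathcal E_t$ compatible with the martingale (Bernstein) machinery inside Lemmas~\ref{lem:technical_lemma_hp_1}--\ref{lem:technical_lemma_hp_2}: $\mathcal E_t$ depends on the iterates, hence on the noise, so one cannot condition on it naively. Following \citet{sadiev2023high}, the remedy is to carry $\mathcal E_t$ through the Bernstein estimate --- on $\mathcal E_t$ the clipped increments are almost surely bounded (by $2\lambda$, resp.\ $2\lambda_h$) and the clipping bias is $\mathcal O(\lambda^{1-p}\sigma^p)$, so the stopped martingales still satisfy the magnitude and variance bounds of Lemma~\ref{lem:bound_variance_magnitude} --- and to charge the additional probabilistic loss to the union bound via the factor $8T$ in $\delta'$. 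What remains, namely checking that the constants $1408, 352, 968$ in~\eqref{eq:stepsize_hp_part} are calibrated so that the five error contributions sum to less than $\Delta_1$, is routine arithmetic once this structure is in place.
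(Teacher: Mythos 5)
Your proposal follows essentially the same route as the paper's proof: the clipped descent lemma combined with the Bernstein-based high-probability bounds on the $\theta$- and $\omega$-partial sums (handled exactly via the Sadiev-style ``bound holds or variance budget exceeded'' events), an induction on the good event $\{\gamma\sum_{s\le t}\|\nabla F(x_s)\|+\Delta_{t+1}\le 2\Delta_1\}$ with a per-iteration union bound, and finally substitution of $\alpha=T^{-p/(2p-1)}$ and inversion to get the sample complexity. The only deviations are cosmetic (you split the failure budget as $\delta/(8T)$ rather than the paper's $\delta/(4T)$, which would only perturb the absolute constants you already defer to calibration), so the argument is correct and matches the paper's.
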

\begin{proof} We remind in the proofs of Lemma~\ref{lem:technical_lemma_hp_1} and Lemma~\ref{lem:technical_lemma_hp_2} we have shown 
\begin{eqnarray*}
    E_{\circledOne, t} \overset{\eqref{eq:def_event_one}}{=} \left\{\left|\circledOne\right| \leq b_1 \text{ or } \sum^t_{j=1}\sigma^2_j > G_1\log\frac{2}{\delta'}\right\}, &&  E_{\circledTwo, t} \overset{\eqref{eq:def_event_two}}{=} \left\{\left|\circledTwo\right| \leq b_2 \text{ or } \sum^t_{j=1}\widetilde{\sigma}^2_j > G_2\log\frac{2}{\delta'}\right\},\\
     E_{\circledFive, t} \overset{\eqref{eq:def_event_5}}{=} \left\{\left|\circledFive\right| \leq b_3 \text{ or } \sum^t_{j=1}\Bar{\sigma}^2_j > G_3\log\frac{2}{\delta''}\right\}, &&
     E_{\circledSix, t} \overset{\eqref{eq:def_event_6}}{=}\left\{\left|\circledSix\right| \leq b_4 \text{ or } \sum^t_{j=1}\widehat{\sigma}^2_j > G_4\log\frac{2}{\delta''}\right\},
\end{eqnarray*}
where 
$$c_1 = 2\lambda,~ G_1 = \frac{18\lambda^{2-p}\sigma^p}{\alpha}, ~b_1 \leq \lambda\left(\frac{4}{3}+6\sqrt{\frac{1}{\alpha}\left(\frac{\sigma}{\lambda}\right)^p}\right)\log\frac{2}{\delta'},$$
$$ c_2 = 8\lambda^2,~ G_2 = \frac{16\cdot 18 \lambda^{4-p}\sigma^p}{\alpha},~ b_2 \leq \lambda^2\left(\frac{16}{3}+24\sqrt{\frac{1}{\alpha}\left(\frac{\sigma}{\lambda}\right)^p}\right)\log\frac{2}{\delta'},$$
$$c_3 = 2\lambda_h,~ G_3= \frac{18\lambda^{2-p}_h\left(6L^p+3\sigma^p_h\right)\gamma^p}{\alpha},~ b_3 \leq \lambda_h\left(\frac{4}{3}+6\sqrt{\frac{(6L^p+3\sigma^p_h)\gamma^p}{\alpha\lambda^p}}\right)\log\frac{2}{\delta''},$$
$$c_4 =  8\lambda^2_h,~ G_4 = \frac{16\cdot 18 \lambda^{4-p}_h\left(6L^p+3\sigma^p_h\right)\gamma^p}{\alpha},~ b_4 \leq \lambda^2_h\left(\frac{16}{3}+24\sqrt{\frac{\left(6L^p+3\sigma^p_h\right)\gamma^p}{\alpha\lambda^p_h}}\right)\log\frac{2}{\delta''}. $$
Moreover, according to Lemma~\ref{lem:technical_lemma_hp_1} and Lemma~\ref{lem:technical_lemma_hp_2}, we have  
$$\PP\left\{E_{\circledOne, t}\right\} \geq 1-\delta',~~ \PP\left\{E_{\circledTwo, t}\right\} \geq 1-\delta',~~ \PP\left\{E_{\circledFive, t}\right\} \geq 1-\delta'',~~\PP\left\{E_{\circledSix, t}\right\} \geq 1-\delta''.$$

The idea of the proof is based on technique form work of \cite{gorbunov2020stochastic, sadiev2023high, UnboundedClippedNSGDM2023Liu}: via  mathematical induction we plan to show that for any $\tau \in \{0,1,\dots,T-1\}$ the event 
    \begin{equation*}
        G_{\tau} = E_{\tau}\cap E_{1,\tau} \cap E_{2,\tau} \cap E_{3,\tau} \cap E_{4,\tau}
    \end{equation*}
    holds with probability at least $1-\frac{\tau\delta}{T}$, where event $E_{\tau}$ is defined as 
    \begin{equation*}
        E_{\tau} = \left\{\gamma\sum^t_{s=0}\|\nabla F(x_s)\| + \Delta_{t+1} \leq 2\Delta_1,\quad \forall~t\leq \tau\right\},
    \end{equation*}
    and 
    \begin{equation*}
        E_{1,\tau} = \bigcap^{\tau}_{t=1} E_{\circledOne, t},\quad E_{2,\tau} = \bigcap^{\tau}_{t=1} E_{\circledTwo, t},\quad E_{3,\tau} = \bigcap^{\tau}_{t=1} E_{\circledFive, t},\quad E_{4,\tau} = \bigcap^{\tau}_{t=1} E_{\circledSix, t}.
    \end{equation*}

\textbf{Basis of induction.} Obviously, we have  $G_0 = E_0 = \left\{\Delta_0 = \Delta_1 \leq 2\Delta_1\right\}$ holds with probability $1 - \frac{\tau\delta}{T} \overset{\tau = 0}{=} 1$. 

\textbf{Step of induction.} Assume that the induction hypothesis is true for $\tau-1$:  $\PP\left\{G_{\tau-1}\right\} \geq 1 - \nicefrac{(\tau-1)\delta}{T}$. One needs to prove $\PP\left\{G_{\tau}\right\} \geq 1 - \nicefrac{\tau\delta}{T}$. We want to mention that 
\begin{eqnarray*}
    E_{\tau-1} \cap E_{1,\tau} \cap E_{2,\tau} \cap E_{3,\tau} \cap E_{4,\tau} = G_{\tau-1} \cap E_{\circledOne,\tau} \cap E_{\circledTwo,\tau} \cap E_{\circledFive,\tau} \cap E_{\circledSix,\tau}. 
\end{eqnarray*}
Then, we have 
\begin{eqnarray*}
    \PP\left\{E_{\tau-1} \cap E_{1,\tau} \cap E_{2,\tau} \cap E_{3,\tau} \cap E_{4,\tau}\right\} &=& \PP\left\{G_{\tau-1} \cap E_{\circledOne,\tau} \cap E_{\circledTwo,\tau} \cap E_{\circledFive,\tau} \cap E_{\circledSix,\tau}\right\}\\
    &=& 1- \PP\left\{\overline{G_{\tau-1} \cap E_{\circledOne,\tau} \cap E_{\circledTwo,\tau} \cap E_{\circledFive,\tau} \cap E_{\circledSix,\tau}}\right\}\\
    &\geq& 1- \PP\left\{\overline{G_{\tau-1}}\right\} - \PP\left\{\overline{E_{\circledOne,\tau}}\right\}- \PP\left\{\overline{E_{\circledTwo,\tau}}\right\}\\
    &&- \PP\left\{\overline{ E_{\circledFive,\tau}}\right\} -\PP\left\{\overline{E_{\circledSix,\tau}}\right\}\\
    &\geq& 1- \frac{(\tau-1)\delta}{T}  - 2\delta' -2\delta''\\
    &=& 1-\frac{\tau\delta}{T} +\left(\frac{\delta}{T} - 2\delta' -2\delta''\right)\\
    &=& 1-\frac{\tau\delta}{T},
\end{eqnarray*}
where we have selected $\delta' = \delta'' = \frac{\delta}{4T}$. The event $E_{\tau-1}$ implies for any $t \leq \tau$
\begin{equation*}
    \Delta_t \leq \gamma\sum^{t-1}_{s=0}\|\nabla F(x_s)\| + \Delta_{t} \leq 2\Delta_1.
\end{equation*}
Therefore, we have $\|\nabla F(x_t)\| \leq \sqrt{2L\Delta_t} \leq 2\sqrt{L\Delta_1}\leq \frac{\lambda}{2}$. Assuming $E_{\tau-1}\cap E_{1,\tau} \cap E_{2,\tau} \cap E_{3,\tau} \cap E_{4,\tau}$ happens, Lemma~\ref{lem:descent_lemma_clipped_sgd} implies
\begin{eqnarray*}
    \gamma\sum^{\tau}_{t=0}\|\nabla F(x_t)\| + \Delta_{\tau+1} &\leq& \Delta_1 + \frac{3\gamma}{\alpha}\sqrt{L\Delta_1} + \frac{\gamma^2 L (\tau+1)}{2}\\
    && + 2\gamma\alpha\sum^{\tau}_{t=1}\left\|\sum^t_{j=1}(1-\alpha)^{t-j} \theta_j\right\| + 2\gamma \sum^{\tau}_{t=1}\left\|\sum^t_{j=1}(1-\alpha)^{t-j+1}\omega_j\right\|\\
    &\leq& \Delta_1 + \frac{3\gamma}{\alpha}\sqrt{L\Delta_1} + \frac{\gamma^2 L (\tau+1)}{2}\\
    && + 44\gamma\alpha(\tau +1)\lambda\log\frac{8T}{\delta} + 118\gamma(\tau+1) \lambda_h\log\frac{8T}{\delta}.
\end{eqnarray*}

By setting clipping levels \eqref{eq:momentum_clipping_level} and stepsize \eqref{eq:stepsize_hp_part}, we have 
\begin{equation*}
    \gamma\sum^{\tau}_{t=0}\|\nabla F(x_t)\| + \Delta_{\tau+1} \leq \Delta_1 +\frac{\Delta_1}{4} +\frac{\Delta_1}{4} +\frac{\Delta_1}{4} +\frac{\Delta_1}{4} = 2\Delta_1.
\end{equation*}
Therefore, we obtain 
\begin{eqnarray*}
    \PP\left\{G_{\tau}\right\} &=& \PP\left\{E_{\tau}\cap E_{1,\tau}\cap E_{2,\tau} \cap E_{3,\tau} \cap E_{4, \tau}\right\}\\
    &=& \PP\left\{E_{\tau-1}\cap E_{1,\tau}\cap E_{2,\tau} \cap E_{3,\tau} \cap E_{4, \tau}\right\} \\
    &\geq& 1-\frac{\tau\delta}{T}.
\end{eqnarray*}
Thus, we have 
\begin{equation*}
    \PP\left\{E_{T}\right\} \geq \PP\left\{G_{T}\right\} \geq 1-\delta,
\end{equation*}
or equivalently with probability at least $1-\delta$ 
\begin{equation*}
    \gamma\sum^{T-1}_{t=0}\|\nabla F(x_t)\| + \Delta_{T} \leq 2\Delta_1~~\Rightarrow~~ \frac{1}{T}\sum^{T-1}_{t=0}\|\nabla F(x_t)\| \leq \frac{2\Delta_1}{\gamma T}.
\end{equation*}
Pugging \eqref{eq:stepsize_hp_part} into the inequality above, we have 
\begin{eqnarray*}
    &&\frac{1}{T}\sum^{T-1}_{t=0}\|\nabla F(x_t)\| \leq \frac{2\Delta_1}{\gamma T}\\
    &&\quad\quad = \mathcal{O}\left(\max\left\{\sqrt{\frac{L\Delta_1}{T}}, \frac{\sqrt{L\Delta_1}}{\alpha T}, \alpha \sqrt{L\Delta_1}\log \frac{T}{\delta}, \sigma\alpha^{\frac{p-1}{p}}\log \frac{T}{\delta}, \sqrt{\frac{\Delta_1(L+\sigma_h)\log\frac{T}{\delta}}{T\alpha^{\nicefrac{1}{p}}}}\right\}\right).
\end{eqnarray*}

Selecting $\alpha = \nicefrac{1}{T^{-\nicefrac{p}{2p-1}}}$ (see~\eqref{eq:momentum_clipping_level}), we obtain
\begin{eqnarray*}
    &&\frac{1}{T}\sum^{T}_{t=1}\|\nabla F(x_t)\| \\
    &&=\mathcal{O}\left(\max\left\{\sqrt{\frac{L\Delta_1}{T}}, \frac{\sqrt{L\Delta_1}}{T^{\frac{p-1}{2p-1}}},  \frac{\sqrt{L\Delta_1}}{T^{\frac{p-1}{2p-1}}}\log \frac{T}{\delta}, \frac{\sigma}{T^{\frac{p-1}{2p-1}}}\log\frac{T}{\delta}, \sqrt{\frac{\Delta_1(L+\sigma_h)\log\frac{T}{\delta}}{T^{\nicefrac{2p-2}{2p-1}}}}\right\}\right)\\
    && = \mathcal{O}\Bigg(\frac{\max\left\{\sqrt{\Delta_1(L+\sigma_h)}, \sigma\right\}}{T^{\frac{p-1}{2p-1}}}\log \frac{T}{\delta} \Bigg).
\end{eqnarray*}

\textbf{Discussion.} According to Theorem~\ref{thm:main_theorem_hp}, the sample complexity of Algorithm~\ref{alg:NSGD_SOM_clipped} is 
\begin{equation*}
    \widetilde\cO\left(\left(\frac{\max\left\{\sqrt{\Delta_1(L+\sigma_h)}, \sigma\right\}}{\varepsilon}\right)^{\frac{2p-1}{p-1}}\right),
\end{equation*}
which has the same dependence on $\varepsilon$ as lower bounds do (see Theorem~\ref{thm:lower_bounds}). However, the obtained upper bound has worst dependence on parameters $L,\sigma,\sigma_h, \Delta_1$ than lower bounds have. We suppose that this caused by the analysis technique, since Algorithm~\ref{alg:NSGD_SOM} has better convergence rate in terms of parameter dependence. This observation raises an interesting question: Is it possible to conduct high-probability analysis for Algorithm~\ref{alg:NSGD_SOM}, or equivalently, Algorithm~\ref{alg:NSGD_SOM_clipped} without clipping?

%It is worth to mention that Algorithm~\ref{alg:NSGD_SOM_clipped} has the same complexity as Normalized SGD with momentum variance reduction and clipping from the work of \cite{UnboundedClippedNSGDM2023Liu}, but compare to their work, we use second order information and assume that hessian has bounded $p$-th central moment (Assumption~\ref{ass:p_BCM_hess}), but each $f(\cdot,\xi)$ has a Lipschitz continuous gradient almost surely. 

\end{proof}

\newpage
\section{Experimental Results}

We solve a simple quadratic problem, $\frac{1}{2}\|x\|^2$, in dimension $d=10$ , with synthetic noise sampled from a two-sided Pareto distribution. Recall that two-sided Pareto distribution with tail index $\bar p$ satisfies Assumption~\ref{ass:p_BCM} assumption with $p < \bar p$. We consider different values of the tail index $\bar p$ ranging from $1.1$ to $2.0$. We start each algorithm from the same point sampled uniformly at random from the standard normal distribution. Our empirical analysis consists of three experimental settings:

\begin{figure}[ht]
    \begin{minipage}[t]{0.48\linewidth}
        \centering
        \includegraphics[width=\linewidth]{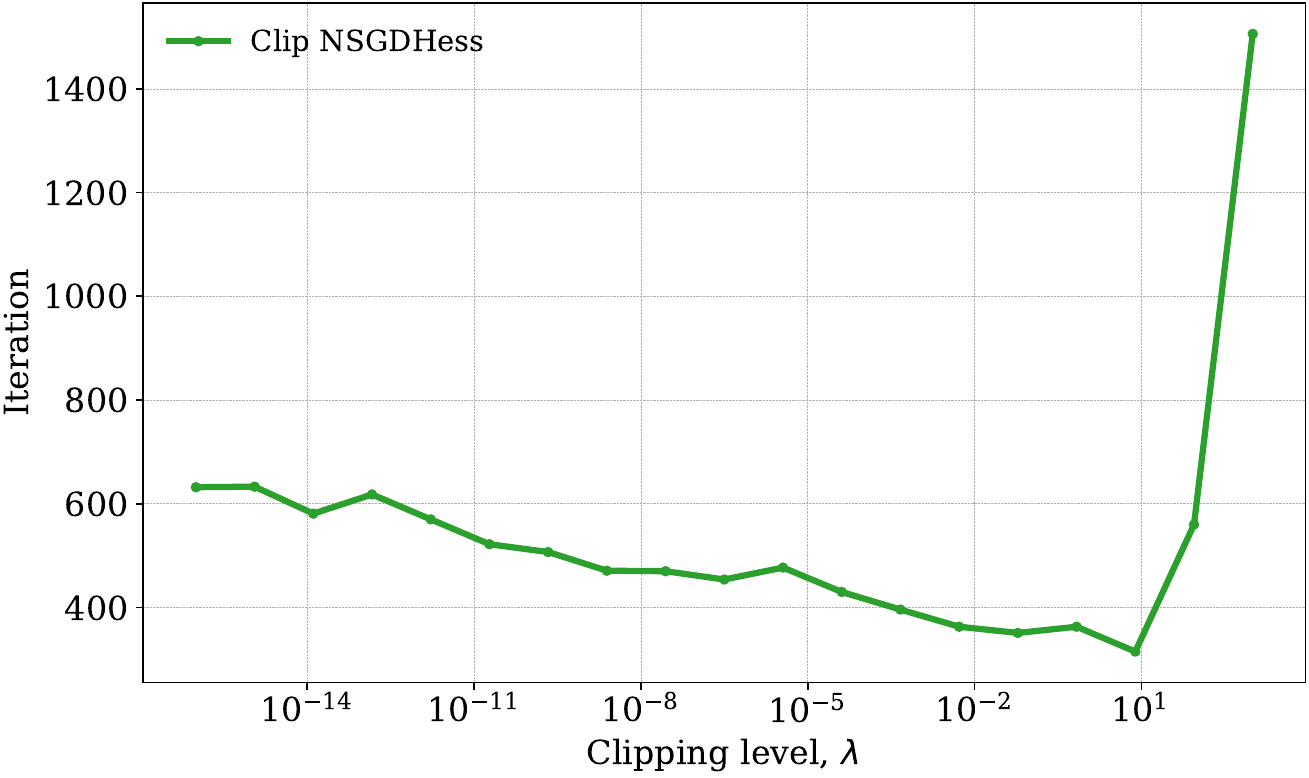}
        \caption{Effect of Hessian Clipping Level $\lambda_h = \lambda$ on the Iteration Complexity. The plot shows the number of iterations required for {Clip NSGDHess} to find a point with $\norm{\nabla F(x)} \leq 3/2$. For extremely small and large values of $\lambda,$ more iterations are needed. The recommended value for this task is $\lambda_h = 10.$
        }
        \label{fig:sensitivity_clipping}
    \end{minipage}%
    \hfill
    \begin{minipage}[t]{0.48\linewidth}
        \centering
        \includegraphics[width=\linewidth]{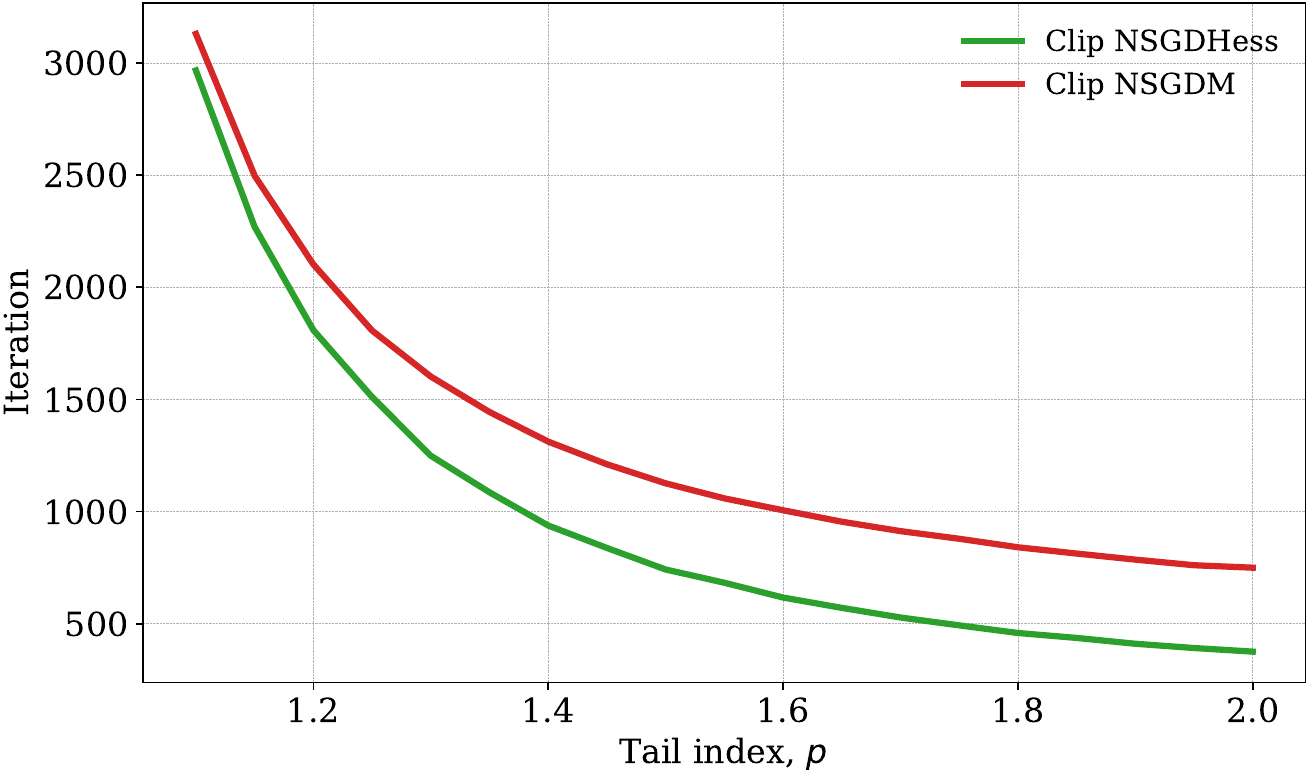}
        \caption{Number of iterations needed for Clip NSGDMHess and  Clip NSGDM under Varying Tail Index to find a point with $\norm{\nabla F(x)} \leq 3/2$ starting with the same initial point. The performance of both algorithms decreases gradually with the decrease of the tail index. The iteration complexity of the second-order algorithm, Clip NSGDMHess is uniformly better for all values of $p \in [1.1, 2].$}
        \label{fig:complexity_comparison}
    \end{minipage}
\end{figure}

\paragraph{1. Effect of Clipping Level on Iteration Complexity.} 
We run Algorithm~\ref{alg:NSGD_SOM_clipped} (Clip NSGDHess) with $\lambda_h = \lambda$ until it reached the target accuracy $\|\nabla F(x)\| \leq 3/2$. The stepsize and momentum parameters are set to $\gamma = 0.01$ and $\alpha = 0.2$. Clipping levels are varied from $10^{-16}$ to $10^{3}$ in multiplicative steps of $10$. In Figure~\ref{fig:sensitivity_clipping}, we observe that intermediate clipping ($\lambda=10$) yields the lowest iteration complexity, also refer to Table~\ref{tab:iter_comple_lambda_h} for precise numerical values. Extremely small or large values lead to slower convergence. The proposed algorithm tolerates very small values of $\lambda$, but very large values (e.g., $\lambda > 10^3$) result in very slow convergence. This empirically confirms the need for Hessian clipping to stabilize convergence and achieve fast convergence.

\begin{table}[h]
\centering
\begin{tabular}{c|ccccc}
 Clipping Level, $\lambda$ & $10^{-16}$ & $10^{-8}$ & $10^{0}$ & $10^{2}$ & $10^{3}$ \\
\hline
Iterations & 632& 471& 315& 560& 1506 \\
\end{tabular}
\vspace{0.2cm}
\caption{Iteration complexity depending on the clipping level $\lambda = \lambda_h$ for Algorithm~\ref{alg:NSGD_SOM_clipped} (Clip NSGDHess), cf. Figure~\ref{fig:complexity_comparison}.
}
\label{tab:iter_comple_lambda_h}
\end{table}

\paragraph{2. Comparison with Clip NSGDM under Varying Tail Index.}
We compare Clip NSGDHess with Clip NSGDM (which was proposed in~\cite{cutkosky2021high}). For Clip NSGDM, we used the theoretical parameter choices: stepsize $\gamma = T^{-\frac{2p-1}{3p-2}}$ and momentum $\alpha = T^{-\frac{p}{3p-2}}$. For both methods we fixed $T=4000$, $\lambda=0.5$, and $\bar{\lambda}_h=0.05$. These experiments were conducted to complement Figure~\ref{fig:complexities}, which presents the theoretical iteration complexities. As shown in Figure~\ref{fig:complexity_comparison}, the empirical results align with the theory: Clip NSGDHess consistently outperforms Clip NSGDM, and the performance gap becomes larger as $p$ increases. This provides strong empirical support for our theoretical findings.

\begin{comment}
\begin{table}[h]
\centering
\begin{tabular}{c|cc}
Tail index, $p$ & Iteration Complexity (Clip NSGDM) & Iteration Complexity (Clip NSGDHess) \\
\hline
1.2& 1809& 2102\\
1.6& 617& 1006\\
2.0& 376& 750\\
\end{tabular}
\vspace{0.2cm}
\caption{Iteration complexity depending on tail index $p$ for Clip NSGDM \citep{cutkosky2021high} and Clip NSGDHess (Algorithm~\ref{alg:NSGD_SOM_clipped}).}
\end{table}
\end{comment}

\paragraph{3. Sensitivity to Gradient Clipping for Fixed Hessian Clipping.}
In this set of experiments, we test Clip NSGDHess (Algorithm~\ref{alg:NSGD_SOM_clipped}). Our goal is to study how iteration complexity depends on the choice of the gradient clipping level $\lambda$ when the Hessian clipping level is fixed. Specifically, we test three different values of the Hessian clipping level, and vary the gradient clipping threshold $\lambda$:
\begin{itemize}
\item For $\bar{\lambda}_h = 0.01$: $\lambda$ ranged from $10^{-4}$ to $10^{2}$ (step $10$).
\item For $\bar{\lambda}_h = 1.0$: $\lambda$ ranged from $10^{-2}$ to $10^{4}$ (step $10$).
\item For $\bar{\lambda}_h = 10$: $\lambda$ ranged from $10^{0}$ to $10^{6}$ (step $10$).
\end{itemize}
The results are summarized in Figure~\ref{fig:dmdskkcs}.

\begin{figure}[ht]
    \begin{minipage}[t]{0.48\linewidth}
        \centering
        \includegraphics[width=\linewidth]{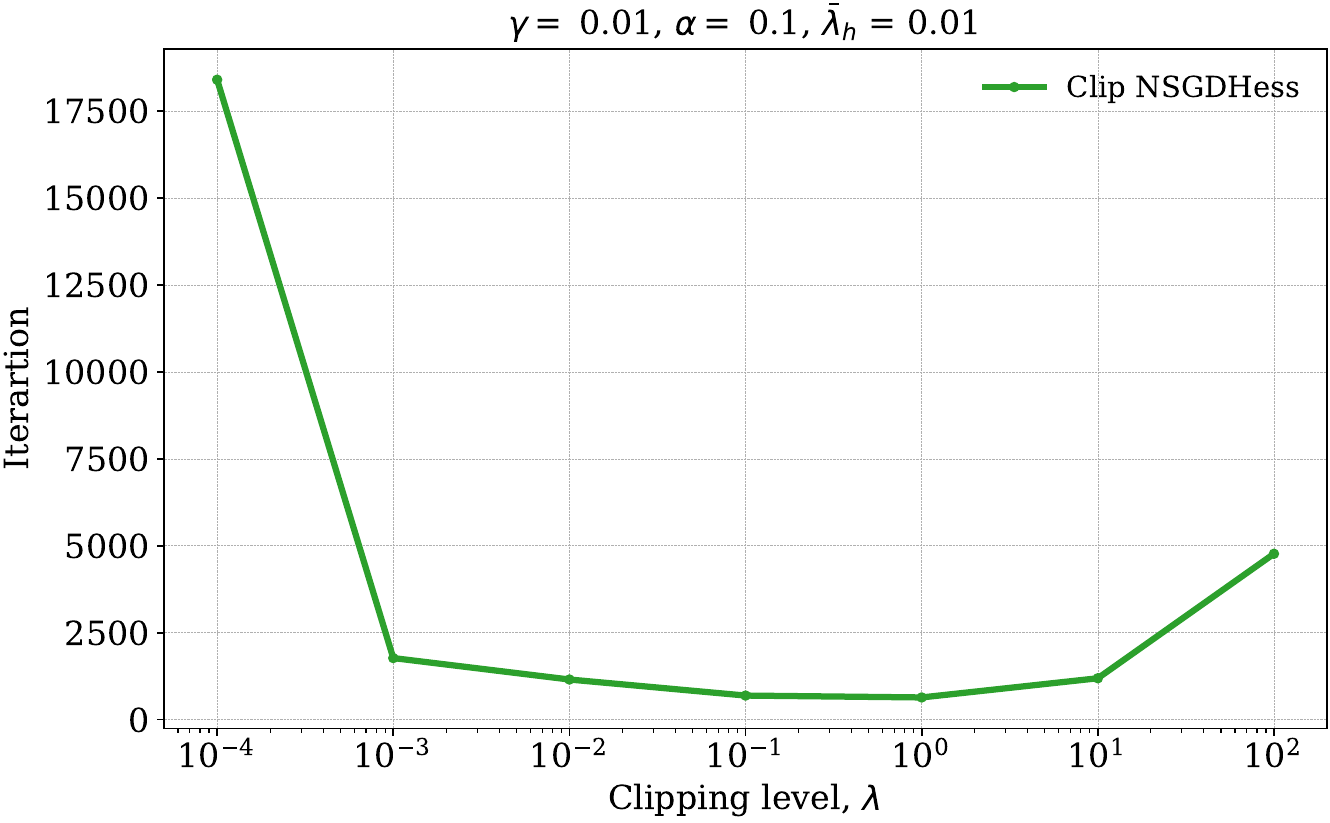}
    \end{minipage}%
    \hfill
    \begin{minipage}[t]{0.48\linewidth}
        \centering
        \includegraphics[width=\linewidth]{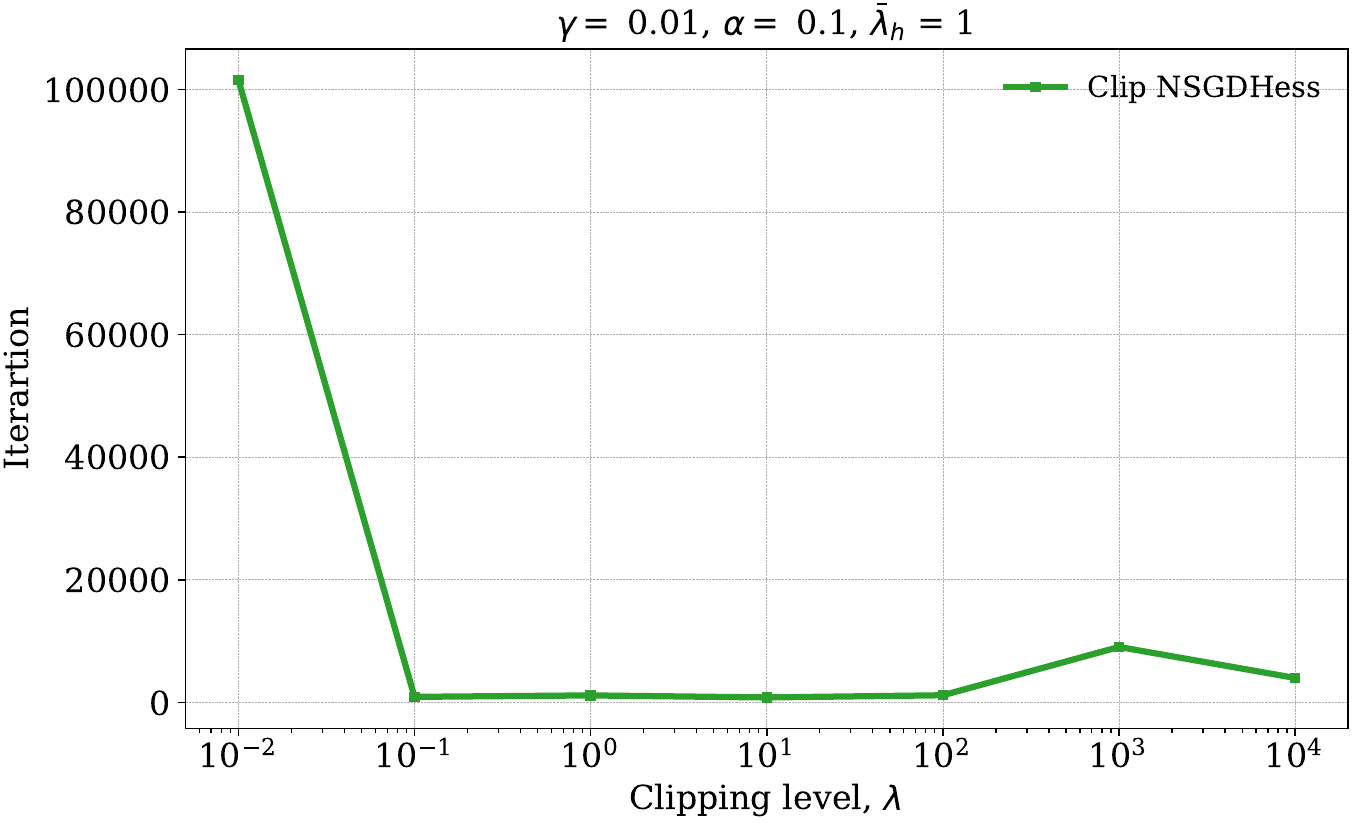}
    \end{minipage}
    \hfill
    \begin{center}
    \begin{minipage}[h]{0.48\linewidth}
        \centering
        \includegraphics[width=\linewidth]{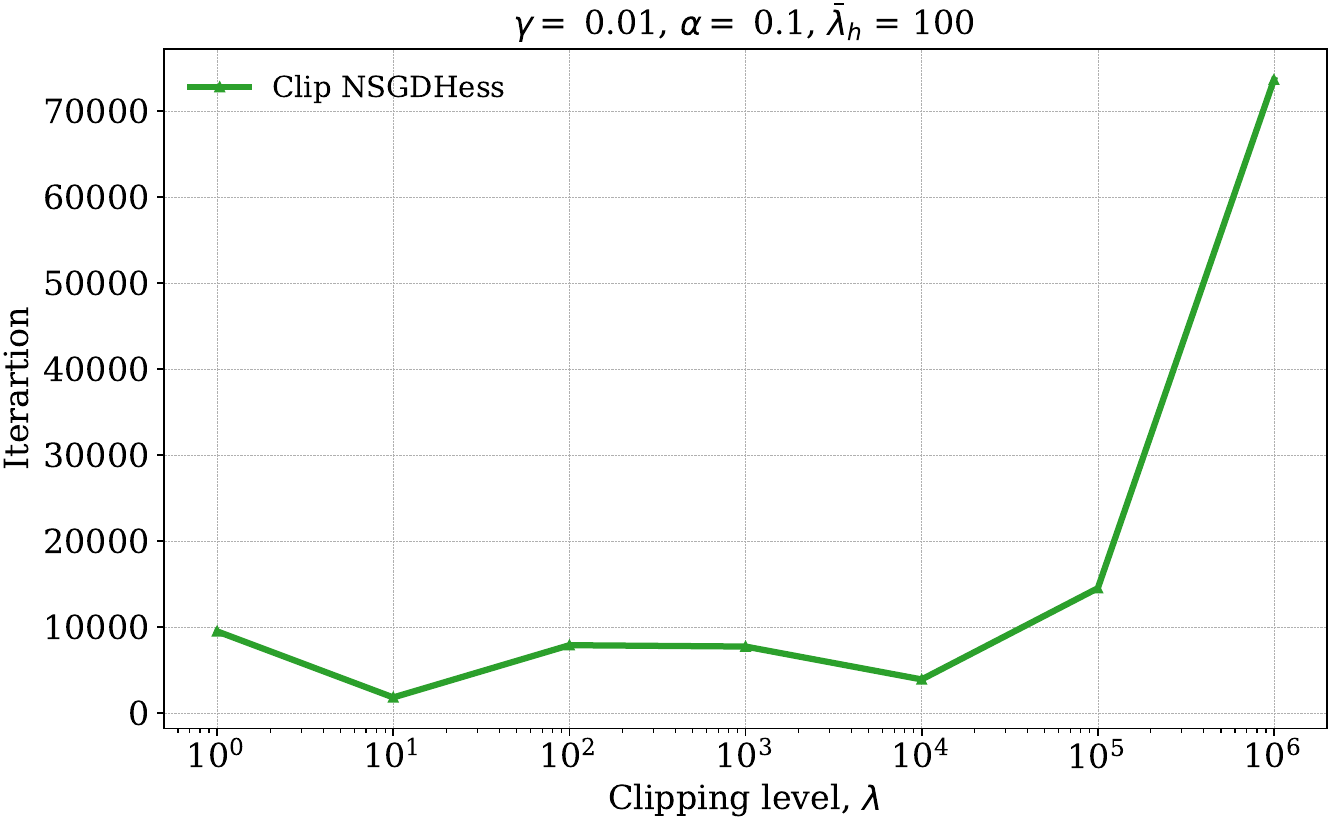}
    \end{minipage}
    \end{center}
    \caption{Iteration complexity of Clip NSGDHess (Algorithm~\ref{alg:NSGD_SOM_clipped}) depending on gradient clipping for the three different fixed values of Hessian clipping $\bar \lambda_h \in \{0.01, 1, 100\}$.}
    \label{fig:dmdskkcs}
\end{figure}

In all cases, the algorithm is tested until $\|\nabla F(x)\| \leq \tfrac{1}{2}$. We observe that intermediate values of $\lambda$ yield the best performance across all three regimes (small, medium, and large Hessian clipping levels). As shown in Figure~\ref{fig:dmdskkcs}, both extremely small and extremely large values of $\lambda$ lead to higher iteration complexity, while moderate choices minimize it. The optimal values are:
$$\lambda = 1.0 \quad \text{for} \quad \bar{\lambda}_h = 0.01, \qquad
\lambda = 0.1 \quad \text{for} \quad \bar{\lambda}_h = 1.0, \qquad
\lambda = 10.0 \quad \text{for} \quad \bar{\lambda}_h = 100.0.$$

These findings confirm that moderate clipping levels are crucial for achieving the best performance aligning with our theoretical findings in Theorem~\ref{thm:main_theorem_hp}.

\end{document}